\newtheorem{thm}{Theorem}[section]
\newtheorem{cor}[thm]{Corollary}
\newtheorem{remark}[thm]{Remark}
\newtheorem{lemma}[thm]{Lemma}
\newtheorem{prop}[thm]{Proposition}
\newtheorem{defn}[thm]{Definition}
\newcommand{\bb}[1]{\mathbb{#1}}
\newcommand{\cl}[1]{\mathcal{#1}}
\title[Super Operator System Structures in Quantum Entanglement]{The super operator system structures and their applications in Quantum Entanglement Theory}
\author[B.~Xhabli]{Blerina ~Xhabli}
\address{Department of Mathematics, University of Houston,
Houston, Texas 77204-3476, U.S.A.}
\email{blerinax@math.uh.edu}
\begin{document}

\begin{abstract}
An operator system $\cl S$ with unit $e$, can be viewed as an Archimedean order unit space $(\cl S,\cl S^+,e)$. Using this Archimedean order unit space, for a fixed $k\in \bb N$ we construct a super k-minimal operator system OMIN$_k(\cl S)$ and a super k-maximal operator system OMAX$_k(\cl S)$, which are the general versions of the minimal operator system OMIN$(\cl S)$ and the maximal operator system OMAX$(\cl S)$ introduced recently, such that for $k=1$ we obtain the equality, respectively. We develop some of the key properties of these super operator systems and make some progress on characterizing when an operator system $\cl S$ is completely boundedly isomorphic to either OMIN$_k(\cl S)$ or to OMAX$_k(\cl S)$. Then we apply these concepts to the study of k-partially entanglement breaking maps. We prove that for matrix algebras a linear map is completely positive from OMIN$_k(M_n)$ to OMAX$_k(M_m)$ for some fixed $k\le \min(n,m)$ if and only if it is a k-partially entanglement breaking map. \medskip

\noindent {\bf Keywords:} operator system, operator space, quantum information theory, quantum entanglement, Schmidt number

\end{abstract}

\maketitle

\section{Introduction}

Operator system theory was initiated with Arveson's version of the Hahn-Banach theorem for completely positive operator-valued mappings~\cite{WA69}. This theory provides an abstract description of the order structure of self-adjoint unital subspaces of C$^*$-algebras. In the past twenty years, beginning with Ruan's abstract characterization of operator spaces \cite{Ru}, there has been a great deal of research activity focused on operator spaces and completely bounded maps. In contrast, there has been relatively little development of the abstract theory of operator systems. However, many deep results about operator
spaces are obtained by regarding them as corners of operator systems. So, potentially, parallel developments in the theory of operator systems could lead to new insights in the theory of operator spaces. 

Moreover, recent investigations in operator space and operator system theory \cite{VIP,PISIER} are being combined with those in quantum entanglement theory \cite{HHHH, NC} in order to obtain new results and new elementary proofs in both areas. From this point of view, the results shown in this paper serve as a bridge between operator system theory and quantum entanglement theory.  These unpublished results~\cite{BX} have been used quite extensively to prove how mapping cones coincide with operator systems \cite{JS}, and also to show the relationship between the operator systems and the separability problem in quantum information theory \cite{JKPP}. We give further details below before proceeding.

In~\cite{PTT}, two operator systems were constructed over a given Archimedean order unit space $\cl S$, denoted as OMIN$(\cl S)$ and OMAX$(\cl S)$, as the analogues of MIN and MAX functors from the category of normed spaces into the category of operator spaces, and their properties were developed accordingly. The properties that characterize these two new formulated operator systems led the authors to prove that the entanglement breaking maps between matrix algebras, studied in \cite{Hol, HSR, Ar}, coincide with the linear maps that are completely positive when the matrix algebra of the domain is equipped with their minimal operator system structure and the target matrix algebra is equipped with their maximal operator system structure.

In this paper, we consider a generalization of such parallel developments for operator systems. 
Every operator system $\cl S$ with a unit is an ordered *-vector space $\cl S$ with an Archimedean order unit at the first level and conversely, given any Archimedean order unit space, there are possibly many different operator systems that all have the given Archimedean order unit space as their first level. For a fixed $k\in \bb N$ and a given operator system $\cl S$, we construct a super k-minimal operator system OMIN$_k(\cl S)$, and a super k-maximal operator system OMAX$_k(\cl S)$, such that whenever $k=1$ we obtain OMIN$(\cl S)$ and OMAX$(\cl S)$ respectively. We investigate their properties in Sections~\ref{oss}, ~\ref{kmax}. Furthermore, we provide necessary and sufficient conditions for an operator system $\cl S$ to be completely boundedly isomorphic to OMIN$_k(\cl S)$ or OMAX$_k(\cl S)$ in these two sections. 

In Section~\ref{mat} we discuss the dual matrix ordered space to a given matrix ordered space and identify the dual spaces of the super operator systems OMIN$_k(\cl S)$ and OMAX$_k(\cl S)$. In Section~\ref{kpeb} we apply our results to the study of partially entanglement breaking maps between matrix algebras encountered in Quantum Information Theory~\cite{CK, NJ}. We characterize the k-partially entanglement breaking maps from $M_n$ to $M_m$ as the maps that are completely positive from OMIN$_k(M_n)$ to OMAX$_k(M_m)$, where $k\le \min(n,m)$. The next section is devoted to preliminary notions and results.\\

\section{Preliminaries}\label{prel}

Let V be a complex vector space. An {\bf involution} on V is a conjugate linear map $*: V \to V$ given by $v\mapsto v^*$, such that $v^{**}=v$ and $(\lambda v+ w)^*=\bar\lambda v^*+w^*$ for all $\lambda\in \bb C$ and $v,w\in V$. The complex vector space V together with the involution map  is called a {\bf $*$-vector space}. If V is a $*$-vector space, then we let $V_{sa}=\{v\in V | v=v^*\}$ be the real vector space of self-adjoint elements of V.\\

A {\bf cone} $W \subseteq V$ is a nonempty subset of a real vector space V, such that $W+W\subseteq W$ and $\bb R^+W \subseteq W$ where $\bb R^+=[0,\infty)$. Moreover, $W$ is called a {\bf proper cone} if $ W \cap (-W) =\{0\}$. An {\bf ordered $*$-vector space $(V, V^+)$} is a pair consisting of a $*$-vector space V and a proper cone $V^+\subseteq V_{sa}$. The elements of $V^+$ are called positive and there is a partial order $\ge$(respectively, $\le$) on $V_{sa}$ defined by $v\ge w$ (respectively, $w\le v$) if and only if $v-w \in V^+$ for $v,w\in V_{sa}$. \\

An element $e\in V_{sa}$ is called an {\bf order unit} for V if for all $v\in V_{sa}$, there exists a real number $t >0$ such that $te\ge v$. This order unit $e$ is called {\bf Archimedean order unit} if whenever $v\in V$ and $te+v \in V^+$ for all real $t>0$, we have that $v\in V^+$. In this case, we call the triple $(V,V^+,e)$ an {\bf Archimedean ordered unital $*$-vector space} or an {\bf AOU space} for short.\\

Let $(V,V^+)$,  $(W,W^+)$ be two ordered *-vector spaces with order units $e, e'$ respectively. A linear map $\phi: V\to W$ is called {\bf positive} if  $\phi(V^+)\subseteq W^+$, and {\bf unital} if it is positive and $\phi(e)=e'$. Moreover, $\phi$ is an {\bf order isomorphism} if $\phi$ is bijective, and both $\phi, \phi^{-1}$ are positive. Note that, if $\phi:V\to W$ is positive, then $\phi(v^*)=\phi(v)^*$ for all $v\in V$.  \\

Let V be a $*$-vector space and let $M_{n,m}(V)$ denote the set of all $n\times m$ matrices with entries in V . The natural addition and scalar multiplication turn $M_{n,m}(V)$ into a complex vector space. We often write $M_{n,m}= M_{n,m}(\bb C)$, and let $\{E_{i,j}\}^{n,m}_{i,j=1}$ denote its canonical matrix unit system. For a given matrix $A\in M_{n,m}$, we write $\bar A, A^t\text{ and } A^*$ for the complex conjugate, transpose and complex adjoint of $A$, respectively. If $n=m$, we write $M_{n,n}=M_n$ and $I_n$ for the identity matrix. The matrix units determine the linear identifications $M_{n,m}(V)\cong M_{n,m}\otimes V\cong V\otimes M_{n,m},$ where  $$v=(v_{ij})\mapsto\sum_{i,j=1}^{n,m}E_{i,j}\otimes v_{ij}\text{ and  }v=(v_{ij})\mapsto\sum_{i,j=1}^{n,m}v_{i,j}\otimes E_{ij},\text{ respectively.}$$  More often than not, we will use the first linear identification with the matrix coefficients on the right. There are two basic natural operations which link the finite matrix linear spaces $M_{n,m}(V)$: the direct sum and the matrix product. Given $v\in M_{n,m}(V)$ and $w\in M_{p,q}(V)$, then we define the direct sum $v\oplus w \in M_{n+p,m+q}(V)$ by $$v\oplus w=\left[\begin{matrix}v & 0\\ 0 & w\end{matrix}\right]\in M_{n+p,m+q}(V).$$ On the other hand, given $A=(a_{ki})\in M_{p,n},\,B=(b_{jl})\in M_{m,q}$ and $v=(v_{ij})\in M_{n,m}(V)$, we define the matrix product $AvB\in M_{p,q}(V)$ by  $$AvB=\left[\sum_{i,j=1}^{n,m} a_{ki}v_{ij}b_{jl}\right]^{p,q}_{k,l=1}\in M_{p,q}(V).$$ Note that, if $V=M_r$ and we use the identification $M_{n,m}(M_r)\cong M_{n,m}\otimes M_r$, then we have for any $X\in M_{p,n},\, a\in M_{n,m}(M_r)$ and $Y\in M_{m,q}$ $$XaY=(X\otimes I_r)a(Y\otimes I_r)\in M_{p,q}(M_r).$$
 Let $V,\,W$ be two $*$-vector spaces. Given a linear map $\phi:V\to W$ and $n,m\in \bb N$, we have a corresponding map $\phi^{(n,m)}:M_{n,m}(V)\to M_{n,m}(W)$ defined by $\phi^{(n,m)}(v)=(\phi(v_{ij}))$. We let $\phi^{(n)}=\phi^{(n,n)}:M_n(V)\to M_n(W).$\\ If we are given $v,\,w,\, A$ and $ B$ as above, then one can easily verify that $$\phi^{(n+p,m+q)}(v\oplus w)=\phi^{(n,m)}(v)\oplus \phi^{(p,q)}(w)$$ and $$\phi^{(p,q)}(AvB)=A\phi^{(n,m)}(v)B.$$
Moreover, if  $\phi:V\to W$ is a linear map and $W=M_k$, then we have for any $X\in M_{p,n},$ $ a\in M_{n,m}(V)$ and $Y\in M_{m,q}$ $$\phi^{(p,q)}(XaY)=X\phi^{(n,m)}(a)Y=(X\otimes I_k)\phi^{(n,m)}(a)(Y\otimes I_k).$$ 
\indent Let V be a $*$-vector space. We define a $*$-operation on $M_n(V)$ by letting $[v_{ij}]^*=[v_{ji}^*]$. With respect to this operation, $M_n(V)$ is a $*$-vector space. We let $M_n(V)_{sa}$ be the set of all self-adjoint elements of $M_n(V)$. Let $ \{C_n\}_{n=1}^\infty$ be a family of proper cones $C_n \subset M_n(V)_{sa}$ for all $n\in \bb N$, such that they are {\bf compatible}, i.e $X^*C_n X \subseteq C_m$ for all $X\in M_{n,m}, \, m\in \bb N$. We call each such $C_n$ a {\bf matrix cone}, the family of these matrix cones a {\bf matrix ordering on V}, and the pair $(V, \{C_n\}_{n=1}^\infty)$ a {\bf matrix ordered $*$-vector space}.\\

Let $(V, \{C_n\}_{n=1}^\infty)$ and $(W, \{C_n'\}_{n=1}^\infty)$ be matrix ordered $*$-vector spaces. Then a linear map $\phi:V\to W$ is called {\bf completely positive} if $\phi^{(n)}(C_n)\subseteq C_n'$ for all $n\in \bb N$. Moreover, $\phi$ is called a {\bf complete order isomorphism} if $\phi$ is invertible and both $\phi,\, \phi^{-1}$ are completely positive.\\

Let $(V, \{C_n\}_{n=1}^\infty)$ be a matrix ordered $*$-vector space. Let $e\in V_{sa}$ be the distinguished order unit for V. Consider the corresponding diagonal matrix $e_n = e \otimes I_n \in M_n(V)_{sa}$ for all $n\in \bb N$, where $I_n$ is the unit of $M_n$. We say that $e$ is a {\bf matrix order unit} for V if $e_n$ is an order unit for the ordered $*$-vector space $(M_n(V),C_n)$ for each $n$. We say $e$ is an {\bf Archimedean matrix order unit} if $e_n$ is an Archimedean order unit for the ordered $*$-vector space $(M_n(V), C_n)$ for each $n$. Finally, we say that the triple $(V, \{C_n\}_{n=1}^\infty, e)$ is an {\bf (abstract) operator system}, if $V$ is a $*$-vector space, $ \{C_n\}_{n=1}^\infty$ is a matrix ordering on V, and $e$ is an Archimedean matrix order unit.\\

The matrix ordering $\{C_n\}_{n=1}^\infty$ such that $(V,\{C_n\}_{n=1}^\infty,e)$ is an operator system with $C_1 = V^+$ is called an {\bf operator system structure}. Given an operator system $(\cl S, \{P_n\}_{n=1}^\infty, e)$ and a unital positive map $\varphi: V\to \cl S$ such that $V^+ = \varphi^{-1}(P_1)$, one obtains an operator system structure on V by setting $C_n =\varphi_n^{-1}(P_n)$. We shall call this {\bf the operator system structure induced by $\varphi$}.
Conversely, given an operator system structure on $V$, by letting $\cl S = V$ and letting $\varphi$ be the identity map, then we see that the given operator system structure is the one induced by $\varphi$.\\

  If $\cl P = \{P_n\}_{n=1}^\infty$ and $\cl Q = \{Q_n\}_{n=1}^\infty$ are two matrix orderings on V , we say that $\cl P$ is {\bf stronger} than $\cl Q$ (respectively, $\cl Q$ is {\bf weaker} than $\cl P$) if $P_n\subseteq Q_n$ for all $n\in \bb N$. Note that $\cl P$ is stronger than $\cl Q$ if and only if for every n, and every $A,B\in M_n(V )_{sa}$, the inequality $A \le_{\cl P} B$ implies that $A\le_{\cl Q} B$, where the subscripts are used to denote the partial orders induced by $\cl P$ and $\cl Q$, respectively. Equivalently,  $\cl P$ is stronger than  $\cl Q$ if and only if the identity map on V is completely positive from $(V, \{P_n\}_{n=1}^\infty)$ to $(V, \{Q_n\}_{n=1}^\infty)$.\\

\begin{defn} Let $(V,\{C_n\}_{n=1}^\infty)$ be a matrix ordered $*$-vector space with matrix order unit $e$. A linear map $\phi:V\to M_k$ is called {\bf unital} if $\phi(e)=I_k$, $\phi$ is called {\bf positive} if $\phi(V^+) \subseteq M_k^+$, and $\phi$ is called {\bf k-positive} if $\phi^{(k)}(C_k)\subseteq M_k(M_k)^+$. Set $$S_k(V)=\{\phi:V\to M_k\,|\, \phi \text{ unital k-positive maps }\}.$$
\end{defn}

\begin{prop}\label{kpos} Let $(V,\{C_n\}_{n=1}^\infty)$  be a matrix ordered $*$-vector space. For a fixed $k\in \bb N$, let $\phi:V \to M_k$ be a linear map. Then $\phi$ is completely positive if and only if $\phi$ is k-positive.
\end{prop}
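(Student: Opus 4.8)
The plan is to dispatch the forward implication at once and concentrate all the work on the converse, where the special role of the target $M_k$ enters. If $\phi$ is completely positive, then by definition $\phi^{(n)}(C_n)\subseteq M_n(M_k)^+$ for every $n$, and specializing to $n=k$ gives exactly the $k$-positivity of $\phi$. So it remains to show that $k$-positivity forces $\phi^{(n)}(C_n)\subseteq M_n(M_k)^+$ for \emph{every} $n\in\bb N$, and the only interesting case is $n>k$.

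Fix $n$ and $v\in C_n$; I want to prove $\phi^{(n)}(v)\ge 0$ in $M_n(M_k)\cong M_{nk}$. I would test positivity against an arbitrary vector $\xi\in\bb C^n\otimes\bb C^k$ and exploit the fact that such a $\xi$ has Schmidt rank at most $k$. Concretely, writing $\{e_i\}$ and $\{f_l\}$ for the standard bases of $\bb C^n$ and $\bb C^k$, I represent $\xi=\sum_{i,l}\Xi_{il}\,e_i\otimes f_l$ by a rectangular matrix $\Xi=(\Xi_{il})\in M_{n,k}$. The key observation is that $\xi=(\Xi\otimes I_k)\,\omega$, where $\omega=\sum_{l=1}^k f_l\otimes f_l\in\bb C^k\otimes\bb C^k$ is the (unnormalized) maximally entangled vector; this is immediate, since $(\Xi\otimes I_k)\omega=\sum_l (\Xi f_l)\otimes f_l$ and $\Xi f_l$ is the $l$-th column of $\Xi$.

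Now I would use compatibility of the matrix cones to transport the problem down to level $k$. Taking $X=\Xi\in M_{n,k}$ in the compatibility condition $X^*C_nX\subseteq C_k$ yields $\Xi^* v\,\Xi\in C_k$. Applying $k$-positivity gives $\phi^{(k)}(\Xi^* v\,\Xi)\in M_k(M_k)^+$, and by the multiplicativity of the amplifications recorded in the preliminaries (with target $M_k$), $\phi^{(k)}(\Xi^* v\,\Xi)=(\Xi\otimes I_k)^*\,\phi^{(n)}(v)\,(\Xi\otimes I_k)\ge 0$. Combining this with the factorization of $\xi$,
\[
\langle \xi,\ \phi^{(n)}(v)\,\xi\rangle
=\big\langle \omega,\ (\Xi\otimes I_k)^*\phi^{(n)}(v)(\Xi\otimes I_k)\,\omega\big\rangle\ge 0,
\]
and since $\xi$ was arbitrary this gives $\phi^{(n)}(v)\ge 0$, as desired. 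Note that this single argument is uniform in $n$, so it also covers the easier range $n\le k$ with no separate treatment.

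I expect the only genuinely subtle point to be the reduction step itself: recognizing that a general vector in $\bb C^n\otimes\bb C^k$ factors through the $k$-dimensional diagonal via the matrix $\Xi$, which is precisely the statement that its Schmidt rank cannot exceed $k=\min(n,k)$. Once this is in place, compatibility of the cones together with the algebraic identity $\phi^{(k)}(\Xi^* v\,\Xi)=(\Xi\otimes I_k)^*\phi^{(n)}(v)(\Xi\otimes I_k)$ do the rest. In particular no estimate or Archimedean/limiting argument is needed, since everything reduces to a finite-dimensional positivity check in $M_k(M_k)$.
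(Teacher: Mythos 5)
Your proof is correct and follows essentially the same route as the paper's: both reduce positivity of $\phi^{(n)}(v)$ against a test vector in $\bb C^n\otimes\bb C^k$ to a level-$k$ statement by exploiting that such a vector has Schmidt rank at most $k$, then invoke compatibility ($X^*C_nX\subseteq C_k$) together with $k$-positivity. The only difference is cosmetic: you factor $\xi=(\Xi\otimes I_k)\omega$ through the coefficient matrix and the maximally entangled vector, whereas the paper factors $x=(\beta\otimes I_k)\tilde x$ through an isometry $\beta:\bb C^k\to\bb C^n$ whose range contains the span of the components of $x$.
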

\begin{proof} This is a known proposition~\cite{VIP}, but here we provide a different simple proof:\\
If $\phi$ is completely positive, then $\phi$ is k-positive for each $k\in \bb N$. Now assume $\phi$ is k-positive. Before showing $\phi^{(n)}(v)\ge 0$ for all $v\in C_n, n\ge k$, we will prove the following result:\\ Given any vector $x\in \bb C^n\otimes \bb C^k$, there exists an isometry $\beta:\bb C^k\to \bb C^n$ and a vector $\tilde x\in \bb C^k\otimes \bb C^k$ such that $(\beta\otimes I_k)(\tilde x)=x$ for all $n\ge k$ in $\bb N$. For this, let $e_i=e_i^{(k)}=(0,\dots,0,1_i,0,\dots,0)$ be the usual basis vectors for $\bb C^k$, and let  $x\in \bb C^n\otimes \bb C^k$. Then there exist unique vectors $x_i\in \bb C^n,\, i=1,2,\dots,k$ with $x=\sum_{i=1}^k x_i\otimes e_i^{(k)}.$ Let $\cl F\subseteq \bb C^n$ be the subspace spanned by the vectors $x_i$. Then we have dim$\cl F\le k\le n$. Thus, we may find an isometry $\beta: \bb C^k\to \bb C^n$ whose range contains $\cl F$. For each $i$, we have a unique vector $\tilde x_i\in \bb C^k$  such that $\beta(\tilde x_i)=x_i$. Thus, if $\tilde x=\sum_{i=1}^k\tilde x_i\otimes e_i^{(k)}$, then $(\beta\otimes I_k)(\tilde x)=x$. Now, let $v\in C_n$ and $n\ge k$. Then \begin{eqnarray*}\langle \phi^{(n)}(v)x, x\rangle &=&\langle \phi^{(n)}(v)(\beta\otimes I_k)(\tilde x),(\beta\otimes I_k)(\tilde x)\rangle\\ &=&\langle(\beta^*\otimes I_k) \phi^{(n)}(v)(\beta\otimes I_k)(\tilde x),(\tilde x)\rangle\\ &=& \langle \phi^{(k)}(\beta^*v\beta)\tilde x, \tilde x\rangle \ge 0. \end{eqnarray*}Thus, $\phi$ is n-positive for all $n\in \bb N$, i.e. completely positive.
\end{proof}

\begin{remark}Let $(V,\{C_n\}_{n=1}^\infty)$ be a matrix ordered $*$-vector space with matrix order unit $e$, and let $\phi:V\to M_k$ be a linear map.
\begin{itemize}
\item[(i)] We can think of $\phi$ as a $k\times k$ matrix of linear functionals $\phi_{ij}:V\to \bb C$, i.e. $$\phi=\left[\phi_{ij}\right]_{i,j=1}^k\in M_k(V').$$
\item[(ii)] If $\phi$ is a positive linear map, then $\phi(v^*)=\overline{\phi(v)}^t$ for all $v\in V$, where $t$ stands for the transpose.
\item[(iii)] If $\phi$ is a positive linear map, then all the diagonal entries of $\phi$ are positive linear functionals. Moreover, if $\phi$ is unital, then the diagonal entries are states.
\item[(iv)] If $\phi$ is a positive linear map such that $\phi(e)= D_r \oplus 0,\, 1\le r \le k$, where $$D_r=\left[\begin{matrix}{\begin{matrix}d_1 & \\  & d_2 \end{matrix}} & {\text{\emph{\LARGE{0}}}}\\{\text{\emph{\LARGE{0}}}}&{\begin{matrix} \ddots & \\  & d_r\end{matrix}}\end{matrix}\right], \, d_i\in \bb R^+,\, 1\le i \le r,$$ then one can easily verify that $\phi$ can be written as $\phi=\tilde\phi\oplus 0,$ where $\tilde\phi:V\to M_r$ is a positive map such that $\tilde\phi(e)=D_r$. In this case, $\phi$ is called to be {\bf a positive diagonal map of rank r}, $1\le r\le k$.
\end{itemize}
\end{remark}

\begin{lemma}\label{ued} Let $(V,V^+)$ be an ordered $*$-vector space with order unit $e$. If $\phi:V\to M_k$ is a non-zero positive map, then $\phi$ is unitarily equivalent to a positive diagonal map $\psi:V\to M_k$ of rank $r\le k$.
\end{lemma}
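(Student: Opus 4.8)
The plan is to reduce the statement to Remark (iv) by diagonalizing the single matrix $\phi(e)$ through a unitary conjugation. First I would note that the order unit $e$ is itself positive: applying the order-unit axiom to $v=-e\in V_{sa}$ produces a $t>0$ with $(t+1)e=te-(-e)\in V^+$, and since $V^+$ is a cone we may scale by $\tfrac{1}{t+1}>0$ to conclude $e\in V^+$. Because $\phi$ is positive this gives $\phi(e)\in M_k^+$, so $\phi(e)$ is a positive semidefinite $k\times k$ matrix, which is the only object we need to put into normal form.

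Next I would diagonalize. By the spectral theorem there is a unitary $U\in M_k$ with $U^*\phi(e)U$ diagonal and with nonnegative entries; after composing $U$ with a permutation unitary I may assume the strictly positive eigenvalues appear first, so that $U^*\phi(e)U=D_r\oplus 0$, where $D_r$ is the $r\times r$ diagonal matrix of strictly positive eigenvalues and $r=\operatorname{rank}(\phi(e))$. I then define the unitarily equivalent map $\psi:V\to M_k$ by $\psi(v)=U^*\phi(v)U$. Conjugation by a unitary preserves positivity, since $a\in M_k^+$ forces $\langle U^*aU\,x,x\rangle=\langle a(Ux),Ux\rangle\ge 0$, so $\psi$ is positive, and by construction $\psi(e)=D_r\oplus 0$. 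Remark (iv) now applies verbatim and yields $\psi=\tilde\psi\oplus 0$ for a positive map $\tilde\psi:V\to M_r$ with $\tilde\psi(e)=D_r$; that is, $\psi$ is a positive diagonal map of rank $r$, as required.

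The one step that genuinely uses the hypothesis—and hence the main point to verify—is that $r\ge 1$, equivalently $\phi(e)\neq 0$. Here is where the assumption that $\phi$ is non-zero enters. If instead $\phi(e)=0$, then for an arbitrary $v\in V_{sa}$ I would choose $t,s>0$ with $te-v\in V^+$ and $se+v\in V^+$; positivity of $\phi$ gives $-\phi(v)=t\phi(e)-\phi(v)\in M_k^+$ and $\phi(v)=s\phi(e)+\phi(v)\in M_k^+$, forcing $\phi(v)=0$. Since every element of $V$ is a linear combination of self-adjoint elements, this would give $\phi\equiv 0$, contradicting the hypothesis; hence $\phi(e)\neq 0$ and $r\ge 1$. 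I expect no real obstacle beyond this: the diagonalization and the positivity of $\psi$ are standard linear algebra, and it is worth remarking that mere positivity of $\phi$ (rather than complete positivity) is all that is needed, because every operation used—conjugation by $U$ and the order-unit comparisons—takes place at the first matrix level.
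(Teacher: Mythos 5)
Your proposal is correct and follows essentially the same route as the paper: diagonalize $\phi(e)\in M_k^+$ by a unitary $U$ so that $U^*\phi(e)U=D_r\oplus 0$, and set $\psi=U^*\phi(\cdot)U$, invoking Remark (iv) for the block form. You additionally supply the (correct) justifications that the paper leaves implicit, namely that $e\in V^+$ and that $\phi(e)\neq 0$ when $\phi$ is non-zero, so the rank $r$ is at least $1$.
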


\begin{proof} Let $\phi:V\to M_k$ be a positive map such that $\phi(e)=P\in M_k^+$. The rank of the matrix $P$ is at least $1$ and at most $k$. Without loss of generality, assume $\text{rank}(P)=r$, for some $1\le r\le k$. There exists a unitary $U$, such that $U^*PU=D_r \oplus 0$, where $D_r$ is an $r\times r$ diagonal matrix with positive diagonal entries. Define $\psi:V\to M_k$ by $$\psi(\cdot)=U^*\phi(\cdot)U.$$ It is straightforward to check that $\psi$ is a positive linear map with $\psi(e)=D_r\oplus 0$, i.e. $\psi$ is a rank $r$ positive map, $1\le r\le k$. Hence, $\phi$ is unitarily equivalent to such a map.
\end{proof}
\begin{cor}\label{kued} Let $(V,\{C_n\}_{n=1}^\infty)$ be a matrix ordered $*$-vector space with matrix order unit $e$. If $\phi:V\to M_k$ is a non-zero k-positive map, then $\phi$ is unitarily equivalent to a k-positive diagonal map $\psi:V\to M_k$ of rank $r\le k$.
\end{cor}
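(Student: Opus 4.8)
The plan is to deduce this corollary directly from Lemma~\ref{ued} by supplying the two extra ingredients that the strengthening from ``positive'' to ``k-positive'' demands. Namely, I would first show that a k-positive map is automatically positive, so that the hypothesis of Lemma~\ref{ued} is met and its construction produces a diagonalizing unitary; and then I would verify that conjugating by that unitary preserves k-positivity, so that the resulting diagonal map $\psi$ still satisfies the stronger condition.

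The first step uses only the compatibility of the matrix cones. Given $v\in C_1=V^{+}$, choose the row $X=[\,1\ 0\ \cdots\ 0\,]\in M_{1,k}$; then $X^{*}vX\in C_k$ is the $k\times k$ matrix over $V$ whose $(1,1)$ entry is $v$ and whose remaining entries vanish. Applying $\phi^{(k)}$, the element $\phi^{(k)}(X^{*}vX)\in M_k(M_k)$ is block-diagonal with top-left block $\phi(v)$ and all other blocks zero, and k-positivity forces it into $M_k(M_k)^{+}$. Since positivity of such a block-diagonal element is equivalent to $\phi(v)\ge 0$, we conclude $\phi(v)\in M_k^{+}$, i.e. $\phi$ is positive.

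Now Lemma~\ref{ued} applies: there is a unitary $U\in M_k$ for which $\psi(\cdot)=U^{*}\phi(\cdot)U$ is a positive diagonal map of rank $r\le k$, with $\psi(e)=D_r\oplus 0$. To finish, I would check that $\psi$ is k-positive. A direct block computation shows that for every $v=[v_{ij}]\in M_k(V)$ one has $\psi^{(k)}(v)=(I_k\otimes U^{*})\,\phi^{(k)}(v)\,(I_k\otimes U)$, because the $(i,j)$ block of the right-hand side is exactly $U^{*}\phi(v_{ij})U=\psi(v_{ij})$. Since $I_k\otimes U$ is unitary and $\phi^{(k)}(C_k)\subseteq M_k(M_k)^{+}$, the conjugate stays positive, so $\psi^{(k)}(C_k)\subseteq M_k(M_k)^{+}$ and $\psi$ is k-positive. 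Thus $\psi$ is a k-positive diagonal map of rank $r\le k$ that is unitarily equivalent to $\phi$, as claimed.

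The computation is entirely routine; the only point that requires care is identifying the correct tensor leg in the last step. The conjugating unitary on $M_k(M_k)\cong M_k\otimes M_k$ must be $I_k\otimes U$, acting as the identity on the block (outer) index and as $U$ on the target (inner) copy of $M_k$, rather than $U\otimes I_k$. Once this is pinned down, positivity is preserved automatically and the argument closes.
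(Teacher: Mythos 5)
Your proposal is correct and follows exactly the route the paper intends: the paper states this as an immediate corollary of Lemma~\ref{ued} with no written proof, and the two details you supply (that compatibility of the cones forces a k-positive map to be positive, and that $\psi^{(k)}(v)=(I_k\otimes U^{*})\phi^{(k)}(v)(I_k\otimes U)$ so unitary conjugation preserves k-positivity) are precisely the omitted verifications. Your care about which tensor leg carries $U$ is also right given the paper's convention $M_k(M_k)\cong M_k\otimes M_k$ with the block index on the left.
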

\begin{remark}\label{cum} If $(V,\{C_n\}_{n=1}^\infty)$ is a matrix ordered $*$-vector space with matrix order unit $e$ and $\phi:V\to M_k$ is a non-zero k-positive map with $\phi(e)=P\ge 0$ of rank $r\le k$, then one can easily show that $\phi$ is congruent to some k-positive map $\psi\oplus 0:V\to M_k$ with $\psi\in S_r(V)$ by using Lemma~\ref{ued} and Corollary~\ref{kued}.
\end{remark}

\noindent The following proposition is a generalization of Proposition~3.12 and Proposition~3.13 encountered in~\cite{PT}:
\begin{prop}\label{v=0} Let $(V,\{C_n\}^\infty_{n=1})$ be a matrix ordered $*$-vector space with matrix order unit $e$ such that $(V, C_1=V^+, e)$ is an AOU space. If $v\in V$ and $\phi(v) \ge 0$ for each $\phi\in S_k(V)$, then $v\in V^+$. Furthermore, if $\phi(v)=0$ for all such $\phi$, then $v=0$.
\end{prop}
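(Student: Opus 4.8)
The plan is to reduce the statement to its level-one (scalar) version, namely the fact that in an AOU space the states detect positivity and separate points, which is precisely Proposition~3.12 and Proposition~3.13 of~\cite{PT}. The bridge is an embedding $S_1(V)\hookrightarrow S_k(V)$ obtained by ampliating each state to an $M_k$-valued map. Once this embedding is available, the hypothesis on all of $S_k(V)$ forces the corresponding hypothesis on all states, and the scalar result closes the argument.

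Concretely, I would first fix a state $f\in S_1(V)$, that is, a unital positive linear functional, and define $\phi_f:V\to M_k$ by $\phi_f(v)=f(v)I_k$. The claim is that $\phi_f\in S_k(V)$. Indeed, a state is a positive functional, hence $1$-positive, hence completely positive by Proposition~\ref{kpos} applied with $k=1$. The ampliation $\bb C\to M_k$, $\lambda\mapsto \lambda I_k$, is completely positive, since under $M_n(M_k)\cong M_n\otimes M_k$ it sends a positive $(\lambda_{ij})\in M_n^+$ to $(\lambda_{ij})\otimes I_k\ge 0$. Composing, $\phi_f$ is completely positive, hence in particular $k$-positive, and $\phi_f(e)=f(e)I_k=I_k$, so $\phi_f$ is unital and therefore lies in $S_k(V)$.

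Now suppose $v\in V$ satisfies $\phi(v)\ge 0$ for every $\phi\in S_k(V)$. Applying this to $\phi_f$ yields $f(v)I_k\ge 0$, equivalently $f(v)\ge 0$, for every state $f\in S_1(V)$. At this point I would invoke the scalar statement: in the AOU space $(V,V^+,e)$, a vector on which every state is nonnegative must lie in $V^+$. If one prefers to argue inline rather than cite~\cite{PT}, the mechanism is Hahn--Banach separation in the order-unit norm $\|w\|=\inf\{t>0:-te\le w\le te\}$ on $V_{sa}$: the Archimedean hypothesis makes $V^+$ closed, so a vector outside it is strictly separated by a bounded real functional that is nonnegative on $V^+$; normalizing at $e$ and complexifying produces a state witnessing $f(v)<0$, a contradiction. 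The preliminary reduction to $v=v^*$ is free, since writing $v=a+ib$ with $a,b$ self-adjoint, $f(v)\ge 0$ forces $f(b)=0$ for every state, and separation gives $b=0$. This establishes $v\in V^+$.

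For the final assertion, if $\phi(v)=0$ for every $\phi\in S_k(V)$, then $\phi(v)\ge 0$ and $\phi(-v)=-\phi(v)\ge 0$ for all such $\phi$, so the first part applied to $v$ and to $-v$ gives $v\in V^+$ and $-v\in V^+$; as $V^+$ is a proper cone, $v\in V^+\cap(-V^+)=\{0\}$, hence $v=0$. The only genuine content is the embedding $S_1(V)\hookrightarrow S_k(V)$ together with the scalar separation result, and I expect the separation step---producing an honest state rather than merely a positive functional, which is exactly where the Archimedean hypothesis is used---to be the main point, while everything downstream is formal.
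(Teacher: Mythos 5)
Your argument is correct and follows essentially the same route as the paper: the paper's proof also takes a state $s$, ampliates it to $I_k\otimes s\in S_k(V)$, and reduces to the scalar statements of~\cite{PT}. You simply supply more of the details (the $k$-positivity of the ampliation and the separation argument) that the paper delegates to the cited reference.
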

\begin{proof} Let $s:V\to \bb C$ be a state on $V$, i.e. $s\in S(V)$. Define $$\phi =I_k\otimes s=\left[\begin{matrix} {\begin{matrix}s &\\ & s\end{matrix}} & \text{\LARGE{0}} \\ \text{\LARGE{0}} & {\begin{matrix}\ddots & \\ & s\end{matrix}}\end{matrix}\right]_{k\times k}:V\to M_k.$$ Then $\phi \in S_k(V)$. Let $v\in V$. Then $\phi(v)\ge 0$ if and only if $s(v)\ge 0$. This implies $v\in V^+$. Moreover, $\phi(v)=0$ if only if $s(v)=0$, which implies $v=0$.(more details in ~\cite{PT}.)
\end{proof}

Let $(V,V^+)$ be an ordered $*$-vector space with order unit $e$. We endow the real subspace $V_{sa}$ with the so-called {\bf order seminorm} $\|v\| =\inf\{r| -re \le v\le re \}$. We extend this order seminorm on  $V_{sa}$ to a {\bf $*$-seminorm} on V that preserves the $*$-operation, i.e. $\|v^*\|=\|v\|$  for all $v\in V$. We define {\bf the order seminorm on V} to be a $*$-seminorm $|||\cdot|||$ on $V$ with the property that $|||v|||=\|v\|$ for all $v\in V_{sa}$. If $e$ is an Archimedean order unit, then all these order seminorms become norms because $|||v|||=0$ implies $v\le 0$ and $v\ge 0$. Every order seminorm $\|\cdot\|$ on $V$ induces an {\bf order topology} on V, the topology with a basis consisting of balls $B_\epsilon (v)=\{w\in V: \|v-w\|< \epsilon\}$ for $v\in V$ and $\epsilon >0$. Note that since $\|\cdot\|$ is not necessarily a norm, this topology is not necessarily Hausdorff.

\begin{remark}Let $A\in M_k$ be a $k\times k$ matrix. Recall the {\it usual matrix norm} $$\|A\|=\sup\{\|Ax\|: x \in \bb C^k \text{ with } \|x\|\le 1\}.$$ From matrix theory~\cite{HJ}, we know that if we divide $A$ into block matrices $$A =\left[\begin{matrix}A_r & *\\ * & *\end{matrix}\right]\in M_k,$$ where $A_r\in M_r, \, 1\le r\le k$, then $\|A\|\ge\|A_r\|$. Moreover, if $A=A_r\oplus 0$, then the norms are the same, i.e. $\|A\|=\|A_r\|$.
\end{remark}

\begin{defn} Let $(V,\{C_n\}_{n=1}^\infty)$ be a matrix ordered $*$-vector space with matrix order unit $e$. We define the {\bf k-minimal order seminorm} $\|\cdot\|_{k-min}:V\to [0,+\infty)$ by $$\|v\|_{k-min}=\sup\big\{\|\phi(v)\|\, : \phi\in S_k(V)\big\}.$$
\end{defn}

\noindent{\bf Note:} When k=1, the k-minimal order seminorm becomes the usual minimal order seminorm defined in ~\cite{PT} by $\|v\|_m = \sup\{|s(v)|: s \, \text{ is a state}\}$. And $\|\cdot\|_m \le |||\cdot|||$ for every other $*$-seminorm $ |||\cdot|||$ on $V$.\\ By definition, we have $\|e\|_{k-min} =\|e\|_m=|||e||| =1$. If $(V,\{C_n\}_{n=1}^\infty,e)$ is an abstract operator system, and $\phi:V\to M_k$ is k-positive such that the norm of $\|\phi(e)\| \le 1$ with respect to k-minimal norm, then $\phi$ is called a {\bf contraction}.

\begin{prop}  Let $(V,\{C_n\}_{n=1}^\infty)$ be a matrix ordered $*$-vector space with a matrix order unit $e$ and let $k\in \bb N$. Then  $$\|v\|_{k-min} =\sup\big\{\|\phi(v)\|\, : \phi\in \bigcup_{r=1}^k S_r(V)\,\big\}.$$
\end{prop}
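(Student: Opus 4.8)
The plan is to establish the two inequalities separately. One of them is immediate: since $S_k(V)$ is precisely the $r=k$ term of the union, we have $S_k(V)\subseteq \bigcup_{r=1}^k S_r(V)$, and therefore
$$\|v\|_{k-min}=\sup_{\phi\in S_k(V)}\|\phi(v)\|\le \sup_{\phi\in \bigcup_{r=1}^k S_r(V)}\|\phi(v)\|.$$
So the entire content of the proposition lies in the reverse inequality. For that it suffices to show that for every $r\le k$ and every $\phi\in S_r(V)$ one has $\|\phi(v)\|\le \|v\|_{k-min}$, and I would do this by manufacturing from each such $\phi$ a map $\psi\in S_k(V)$ satisfying $\|\psi(v)\|\ge\|\phi(v)\|$ for all $v$.

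For the construction, fix $r<k$ (the case $r=k$ is trivial: take $\psi=\phi$) and $\phi\in S_r(V)$. Choose a state $s\in S(V)$; one is always at hand, for instance the diagonal entry $\phi_{11}$, which is a state by Remark~(iii) since $\phi$ is unital and positive. Then define $\psi:V\to M_k$ by
$$\psi(v)=\phi(v)\oplus s(v)I_{k-r}=\left[\begin{matrix}\phi(v) & 0\\ 0 & s(v)I_{k-r}\end{matrix}\right].$$
By construction $\psi(e)=I_r\oplus I_{k-r}=I_k$, so $\psi$ is unital.

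The key step, and the only genuine obstacle, is to verify that $\psi$ is $k$-positive, so that $\psi\in S_k(V)$. Here Proposition~\ref{kpos} does the heavy lifting: because $\phi$ is $r$-positive \emph{into} $M_r$, it is in fact completely positive; and $v\mapsto s(v)I_{k-r}$ is completely positive by the same reasoning used for the map $I_k\otimes s$ in the proof of Proposition~\ref{v=0}. A direct sum of two completely positive maps taking values in orthogonal corners of $M_k$ is again completely positive: one checks $\langle\psi^{(n)}(v),\xi,\xi\rangle\ge 0$ by splitting any $\xi\in\bb C^n\otimes(\bb C^r\oplus\bb C^{k-r})$ along the two corners, which reduces the nonnegativity to that of each summand. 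Hence $\psi$ is completely positive, in particular $k$-positive, and therefore $\psi\in S_k(V)$.

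It remains to compare norms, and this is handled by the block-matrix remark: since $\phi(v)$ sits as the top-left $r\times r$ corner of $\psi(v)$, we have $\|\psi(v)\|\ge\|\phi(v)\|$. Combining the displays gives $\|\phi(v)\|\le\|\psi(v)\|\le\|v\|_{k-min}$ for every $\phi\in\bigcup_{r=1}^k S_r(V)$, hence $\sup_{\phi\in\bigcup_{r=1}^k S_r(V)}\|\phi(v)\|\le\|v\|_{k-min}$, which is the reverse inequality and completes the proof. I expect no serious difficulty beyond the $k$-positivity verification; the two potential snags — ensuring a state exists and checking that the padded map stays in $S_k(V)$ — are both dissolved by the fact that $r$-positivity into $M_r$ already forces complete positivity.
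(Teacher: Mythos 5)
Your proof is correct, but it runs in the opposite direction from the paper's. The paper starts from an arbitrary $\phi\in S_k(V)$, compresses it to its upper-left $r\times r$ corner $\phi_r$, checks $\phi_r\in S_r(V)$, and uses the block-matrix norm inequality $\|\phi(v)\|\ge\|\phi_r(v)\|$; it then asserts that the resulting bound holds ``for all $\phi_r\in S_r(V)$.'' As written, that compression argument only bounds $\|\psi(v)\|$ for those $\psi\in S_r(V)$ that arise as corners of elements of $S_k(V)$, so it tacitly relies on the fact that every $\psi\in S_r(V)$ does so arise. Your padding construction $\psi\mapsto\psi\oplus s(\cdot)I_{k-r}$ is precisely the missing surjectivity statement, and it is the same device the paper itself uses elsewhere (the map $\Phi=\phi\oplus s\oplus\cdots\oplus s$ in property (3) of the super $k$-minimal structures). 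Your verification that the padded map lies in $S_k(V)$ -- via Proposition~\ref{kpos}, since $r$-positivity into $M_r$ already forces complete positivity, and a direct sum of completely positive maps into orthogonal corners is completely positive -- is sound, as is the choice $s=\phi_{11}$ to guarantee a state exists without assuming more of $V$ than a matrix order unit. So your route is not only valid but is the logically tighter version of the argument for the nontrivial inequality; the paper's compression direction is shorter on the page but leaves the extension step implicit. (One typographical slip: $\langle\psi^{(n)}(v),\xi,\xi\rangle$ should read $\langle\psi^{(n)}(v)\xi,\xi\rangle$.)
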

\begin{proof} For fixed $k\in \bb N$, let $r\le k$. If $r=k$, then it is clear that $\|v\|_{k-min}=\|v\|_{r-min}$. Assume $r <k$ and let $\phi\in S_k(V)$. Write $$\phi=\left[\phi_{ij}\right]_{i,j=1}^k=\left[\begin{matrix} \left[\phi_{ij}\right]_{i,j=1}^r & *\\ * & *\end{matrix}\right]. $$ Denote $\left[\phi_{ij}\right]_{i,j=1}^r =\phi_r$. Then, one can easily verify that $\phi_r \in S_r(V)$. Hence, we have
$$\|\phi(v)\|=\left\|\left[\begin{matrix}\phi_r(v) & *\\ * & *\end{matrix}\right]\right\| \ge \|\phi_r(v)\|.$$ By taking supremum over all $\phi\in S_k(V)$, we obtain $$\|v\|_{k-min}\ge \|\phi_r(v)\|,\, \text{ for all }\phi_r \in S_r(V).$$ This implies $\|v\|_{k-min}\ge \|v\|_{r-min}$. As a result, we conclude that $$\|v\|_{k-min} =\sup\big\{\|\phi(v)\|\, : \phi\in \bigcup_{r=1}^k S_r(V)\,\big\}.$$
\end{proof}

\begin{thm} Let $(V,\{C_n\}_{n=1}^\infty)$ be a matrix ordered $*$-vector space with matrix order unit $e$ such that $(V,V^+,e)$ is an AOU space. Let $|||\cdot|||$ be any order norm on V such that $\|\cdot\|_{k-min}\le |||\cdot|||$ and let $\phi:V\to M_k$ be a k-positive map. If $\|\phi\|$ denotes the norm of the k-positive map $\phi$ with respect to the order norm $|||\cdot|||$, then $\|\phi\|=\|\phi(e)\|_{M_k}$. Moreover, if $\phi$ is unital, then $\|\phi\|=1$.
\end{thm}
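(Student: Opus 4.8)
The plan is to prove two matching inequalities, the easy one first. Since $\|\phi\|$ is the operator norm of $\phi$ from $(V,|||\cdot|||)$ into $(M_k,\|\cdot\|)$, and since $e\in V_{sa}$ has order norm $|||e|||=\|e\|=1$, evaluating at $e$ already yields $\|\phi\|\ge\|\phi(e)\|_{M_k}$. Everything then reduces to the reverse estimate $\|\phi(v)\|_{M_k}\le\|\phi(e)\|_{M_k}\,|||v|||$ for all $v\in V$; and because $\|\cdot\|_{k-min}\le|||\cdot|||$ by hypothesis, it suffices to prove $\|\phi(v)\|_{M_k}\le\|\phi(e)\|_{M_k}\,\|v\|_{k-min}$.

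To deal with a general $\phi(e)$ (possibly non-unital and possibly singular), I would first normalize via the structure results. By Corollary~\ref{kued}, $\phi$ is unitarily equivalent to a $k$-positive diagonal map $\psi$ of rank $r\le k$ with $\psi(e)=D_r\oplus 0$; conjugation by a unitary is isometric on $M_k$, so it changes neither $\|\phi\|$ nor $\|\phi(e)\|_{M_k}$, and I may replace $\phi$ by $\psi$. Writing $\psi=\tilde\psi\oplus 0$ as in part (iv) of the Remark following Proposition~\ref{kpos}, with $\tilde\psi:V\to M_r$ and $\tilde\psi(e)=D_r$, the block-norm observation $\|A_r\oplus 0\|=\|A_r\|$ gives $\|\psi(v)\|_{M_k}=\|\tilde\psi(v)\|_{M_r}$ and $\|\psi(e)\|_{M_k}=\|D_r\|$. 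Here I would verify that the corner $\tilde\psi$ is itself $r$-positive: since $\psi$ is completely positive by Proposition~\ref{kpos}, it is in particular $r$-positive, and $\tilde\psi$ is the compression $\tilde\psi^{(r)}(v)=(I_r\otimes Q^*)\psi^{(r)}(v)(I_r\otimes Q)$ by the coordinate inclusion $Q:\bb C^r\hookrightarrow\bb C^k$, hence positive. This descent of positivity from level $k$ to the rank-$r$ corner, already packaged in Remark~\ref{cum}, is the step I expect to require the most care.

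It then remains to show $\|\tilde\psi(v)\|_{M_r}\le\|D_r\|\,\|v\|_{k-min}$. Since $D_r$ is a positive invertible matrix in $M_r$, I would pass to the map $\sigma(v)=D_r^{-1/2}\tilde\psi(v)D_r^{-1/2}$. Congruence by $I_r\otimes D_r^{-1/2}$ preserves positivity, so $\sigma$ is $r$-positive, and $\sigma(e)=D_r^{-1/2}D_rD_r^{-1/2}=I_r$, whence $\sigma\in S_r(V)$. Consequently $\|\tilde\psi(v)\|_{M_r}=\|D_r^{1/2}\sigma(v)D_r^{1/2}\|\le\|D_r\|\,\|\sigma(v)\|$, while $\|\sigma(v)\|\le\|v\|_{r-min}\le\|v\|_{k-min}$ by the definition of the $r$-minimal seminorm (as $\sigma\in S_r(V)$) together with the monotonicity $\|\cdot\|_{r-min}\le\|\cdot\|_{k-min}$ for $r\le k$ proved earlier. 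Chaining these inequalities yields the reverse estimate, and combined with the first paragraph gives $\|\phi\|=\|\phi(e)\|_{M_k}$. The unital assertion is then immediate: if $\phi(e)=I_k$ then $\|\phi\|=\|I_k\|=1$.
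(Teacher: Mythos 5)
Your argument is correct and follows essentially the same route as the paper's proof: reduce by unitary equivalence to a diagonal map with invertible corner $D_r$, conjugate by $D_r^{-1/2}$ to produce a unital $r$-positive map in $S_r(V)$, and bound via the $k$-minimal norm using the monotonicity $\|\cdot\|_{r-min}\le\|\cdot\|_{k-min}$. You are somewhat more explicit than the paper about the rank-deficient case (verifying that the corner $\tilde\psi$ is $r$-positive and invoking the $r$-min versus $k$-min comparison), but these are details the paper's earlier lemmas already supply, not a different method.
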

\begin{proof} By Lemma \ref{ued} above, we have that any positive map $\phi:V\to M_k$ is unitarily equivalent to a rank $r\le k$ diagonal map $\psi: V\to M_k$ such that $\psi =(\tilde \psi) \oplus 0$, with $\tilde \psi(e)=D_r$, for all $1\le r\le k$. Therefore, $\|\phi\|=\|\psi\|$. Note that $\|\psi\|=\|\tilde \psi\|$. Hence, it's enough to show that $\|\phi\|=\|\phi(e)\|$ for any diagonal map $\phi$ of rank k.
Let $\phi:V\to M_k$ be a k-positive map with $\phi(e)=D_k \ge 0$ invertible. Then $\psi= \phi(e)^{-1/2}\phi\phi(e)^{-1/2}$ is a unital k-positive map, and for any $v\in V$, we have
\begin{eqnarray*}\|\phi(v)\|& =&\|\phi(e)^{1/2}\psi(v)\phi(e)^{1/2}\|\le \|\phi(e)\|^{1/2} \cdot\|\psi(v)\|\cdot \|\phi(e)\|^{1/2}\\
&\le&\|\phi(e)\|\cdot \sup\{\|\varphi(v)\|: \varphi \in S_k(V) \}\\
&= & \|\phi(e)\|\cdot\|v\|_{k-min}\le \|\phi(e)\|\cdot|||v|||.
\end{eqnarray*}
So, we have $\|\phi\|\le \|\phi(e)\|$. In addition, since $\|e\|_{k-min}=|||e|||=1$, it follows that $\|\phi\|=\|\phi(e)\|$. Moreover, if $\phi$ is unital, then $\|\phi\|=1$.
\end{proof}

\indent We denote by $B(\cl H)$ the space of all bounded linear operators acting on a Hilbert space $\cl H$. A {\bf concrete operator system} $\cl S$ is a subspace of $B(\cl H)$ such that $\cl S=\cl S^*$ and $I_{\cl H}\in \cl S$. As is the case for many classes of subspaces (and subalgebras) of $B(\cl H)$, there is an abstract characterization of concrete operator systems, as was shown in~\cite{PTT}. If $\cl S\subseteq B(\cl H)$ is a concrete operator system, then we observe that $\cl S$ is a $*$-vector space, $\cl S$ inherits an order structure from $B(\cl H)$, and has $I_{\cl H}$ as an Archimedean order unit. Moreover, since $\cl S\subseteq B(\cl H)$, we have that $M_n(\cl S)\subseteq M_n(B(\cl H))\cong B(\cl H^n)$ and hence $M_n(\cl S)$ inherits a natural order structure from $B(\cl H^n)$ and the $n\times n$ diagonal matrix $$I_n\otimes I_{\cl H}=\left[\begin{matrix}{\begin{matrix}I_{\cl H} & \\ & I_{\cl H}\end{matrix}} & \text{\LARGE{0}}\\ \text{\LARGE{0}} &{\begin{matrix}\ddots & \\ & I_{\cl H}\end{matrix}}\end{matrix}\right]$$ is an Archimedean order unit for $M_n(\cl S)$. In other words, $\cl S$ is an abstract operator system $(\cl S, \{M_n(\cl S)^+\}_{n=1}^\infty, I_{\cl H})$, where each matrix cone $M_n(\cl S)^+$ contains $n\times n$ positive matrices in $M_n(B(\cl H))$ for all $n\in \bb N$. We will call this matrix ordering $\{M_n(\cl S)^+\}_{n=1}^\infty$ as {\bf the natural operator system structure} of $\cl S$ inherited by the order structure of $B(\cl H)$. The following result of Choi and Effros~\cite{CE, VIP} shows that the converse is also true.

\begin{thm}[Choi-Effros]\label{c-e} Every concrete operator system $\cl S$ is an (abstract) operator system. Conversely, if $(V, \{C_n\}_{n=1}^\infty,e)$ is an (abstract) operator system, then there exists a Hilbert space $\cl H$, a concrete operator system $\cl S\subseteq B(\cl H)$, and a complete order isomorphism $\phi:V\to \cl S$ with $\phi(e)=I_{\cl H}.$
\end{thm}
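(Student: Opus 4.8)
The plan is to prove the two implications separately, treating the forward direction as a routine check of the axioms and concentrating all the work on the representation (converse) direction. For the forward direction, most of the verification was already carried out in the paragraph preceding the statement: if $\cl S\subseteq B(\cl H)$ is self-adjoint and contains $I_{\cl H}$, then $\cl S$ is a $*$-vector space, the cones $M_n(\cl S)^+=M_n(\cl S)\cap B(\cl H^n)^+$ are proper (an operator that is both $\ge 0$ and $\le 0$ is $0$) and compatible (since $a\ge 0$ implies $X^*aX\ge 0$), and $I_n\otimes I_{\cl H}$ is an Archimedean matrix order unit because the positive cone of $B(\cl H^n)$ is closed, so $a+t(I_n\otimes I_{\cl H})\ge 0$ for all $t>0$ forces $a\ge 0$. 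Thus every concrete operator system is an abstract one.

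For the converse, given an abstract operator system $(V,\{C_n\}_{n=1}^\infty,e)$, I would build the representation as a direct sum over the separating family of unital completely positive maps into matrix algebras. Set $\cl H=\bigoplus_{k\in\bb N}\bigoplus_{\phi\in S_k(V)}\bb C^k$ and $\Phi=\bigoplus_{k,\phi}\phi:V\to B(\cl H)$. Each $\phi\in S_k(V)$ is $k$-positive into $M_k$, hence completely positive by Proposition~\ref{kpos}, so $\Phi$ is completely positive and unital, with $\Phi(e)=I_{\cl H}$. It is bounded because any unital $k$-positive map has norm one (by the norm computation established above), so $\|\Phi(v)\|\le |||v|||$ for any order norm on $V$. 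Injectivity of $\Phi$ is immediate from the second assertion of Proposition~\ref{v=0}: if $\Phi(v)=0$, then $\phi(v)=0$ for every $\phi\in S_k(V)$, whence $v=0$. Consequently $\cl S:=\Phi(V)\subseteq B(\cl H)$ is a concrete operator system, and $\Phi$ is the desired unital complete order isomorphism provided I can show it is a complete order embedding.

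The crux is therefore the implication $\Phi^{(n)}(v)\ge 0\Rightarrow v\in C_n$, which I would prove by contraposition and separation. The space $M_n(V)$, with cone $C_n$ and matrix order unit $e_n$, is itself an operator system, so its first level $(M_n(V),C_n,e_n)$ is an AOU space; applying Proposition~\ref{v=0} to $M_n(V)$ with $k=1$ shows that the states of $M_n(V)$ separate $C_n$. Hence if $v\notin C_n$ there is a state $f$ on $M_n(V)$ with $f(v)<0$. The heart of the matter is the Choi-type correspondence associating to $f$ a completely positive map $\phi_f:V\to M_n$, determined by $[\phi_f(w)]_{ij}=n\,f(w\otimes E_{ij})$, in such a way that $f(v)=\langle\phi_f^{(n)}(v)\eta,\eta\rangle$, where $\eta=\sum_{i}e_i\otimes e_i$ is the maximally entangled vector; then $f(v)<0$ forces $\phi_f^{(n)}(v)\not\ge 0$.

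The remaining wrinkle is that $\phi_f$ need not be unital (one only gets $\mathrm{tr}\,\phi_f(e)=n$), so it need not belong to the family $S_k(V)$ used to define $\Phi$; this is precisely the role of Lemma~\ref{ued}, Corollary~\ref{kued}, and Remark~\ref{cum}. Using them I would write $\phi_f$ as congruent to $\psi\oplus 0$ with $\psi\in S_r(V)$ and $r\le n$, say $\phi_f(\cdot)=Y(\psi(\cdot)\oplus 0)Y^*$; since congruence preserves non-positivity and padding by zeros does not, $\phi_f^{(n)}(v)\not\ge 0$ yields $\psi^{(n)}(v)\not\ge 0$ for a genuine summand $\psi$ of $\Phi$, so $\Phi^{(n)}(v)\not\ge 0$, completing the embedding. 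I expect the main obstacle to be establishing the correspondence $f\leftrightarrow\phi_f$ together with the equivalence between positivity of the functional and complete positivity of the associated map (the abstract Choi theorem on $M_n(V)$), and checking that the entangled-vector identity and the congruence reduction interact correctly; the rest is bookkeeping.
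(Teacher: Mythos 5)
The paper does not actually prove this theorem: it is stated as the classical Choi--Effros representation theorem and is simply cited to \cite{CE, VIP}, so there is no internal proof to compare against. Your proposal is, in essence, a correct reconstruction of the standard argument (it is the proof in \cite[Ch.~13]{VIP}): represent $V$ on the direct sum $\bigoplus_{k,\phi\in S_k(V)}\bb C^k$, use Proposition~\ref{kpos} to upgrade each unital $k$-positive $\phi$ to a completely positive one, use Proposition~\ref{v=0} for injectivity, and obtain the complete order embedding by separating a self-adjoint $v\notin C_n$ from $C_n$ with a state $f$ on the AOU space $(M_n(V),C_n,e_n)$ and converting $f$ into a completely positive map $\phi_f:V\to M_n$ via the functional--map correspondence, with the maximally entangled vector witnessing $\phi_f^{(n)}(v)\not\ge 0$. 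All the pieces you defer are genuinely routine: positivity of $f$ on $C_n$ gives $n$-positivity of $\phi_f$ by exactly the compatibility computation $\langle\phi_f^{(n)}(a)x,x\rangle = n\,f(X^*aX)$, and the congruence reduction of Remark~\ref{cum} correctly lands you on a summand $\psi\in S_r(V)$ with $r\le n$, which your Hilbert space does contain. Two small points worth making explicit in a write-up: first, before invoking the separation theorem you should reduce to $v=v^*$, which follows since $\Phi^{(n)}(v)\ge 0$ forces $\Phi^{(n)}(v-v^*)=0$ and $\Phi^{(n)}$ is injective; second, the separation of a point outside $C_n$ by a state is the Hahn--Banach/order-unit argument from \cite{PT} rather than literally the statement of Proposition~\ref{v=0} (which gives the contrapositive form), but it is the same fact. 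With those caveats your plan is sound and would constitute a complete proof.
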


Thus, every operator system $\cl S\subseteq B(\cl H)$ can be viewed as a matrix ordered $*$-vector space $(\cl S, \{M_n(\cl S)^+\}_{n=1}^\infty)$ with (Archimedean) matrix order unit $e=I_{\cl H}$. Therefore for the rest of this paper, given an operator system $\cl S$, we will use its "natural operator system structure" inherited by the order structure of $B(\cl H)$ for some Hilbert space $\cl H$, and build new operator system structures on it.

\section{The Super k-Minimal Operator System Structures on $\cl S$}\label{oss}

Let $\cl S$ be an operator system and let $e$ be its unit. Before setting up the k-minimal operator system structure on the AOU space $(\cl S, \cl S^+, e)$, recall the weakest operator system structure, introduced in~\cite{PTT} and denoted by $\cl C^{min}(\cl S)=\{C^{min}_n(\cl S)\}_{n=1}^\infty$, where
\begin{eqnarray*}C^{min}_n(\cl S)&=&\big\{(a_{ij})\in M_n(\cl S): (s(a_{ij}))\in M_n^+,\text{ for all } \,s \in S(\cl S)\big\}\\ &=&\big\{(a_{ij})\in M_n(\cl S): (f(a_{ij}))\in M_n^+,\, f \text{ positive linear functional}\big\}\\ &=& \big\{(a_{ij})\in M_n(\cl S): \alpha^*(a_{ij})\alpha \in \cl S^+,\text{ for all }\alpha \in \bb C^n \big\}. \end{eqnarray*}

$\cl C^{min}(\cl S)$ is the operator system structure on $\cl S$, induced by the inclusion of $\cl S$ into $C(S(\cl S))$, the $C^*$-algebra of continuous funtions on $S(\cl S)$, set of states on $\cl S$. And OMIN$(\cl S)$ is the operator system $(\cl S,\cl C^{min}(\cl S), I_{\cl H})$, which can be identified as a subspace of $C(S(\cl S))$, up to complete order isomorphism.\\

In the next result, we generalize the complex version of Kadison's characterization of function systems~\cite{PT, VIP}:
\begin{thm}\label{krt} Let $\cl S$ be an operator system with unit $e$ and fix $k\in \bb Z^+$. Give $\cl S$ the order topology generated by the k-minimal order norm, denoted as $\cl S_{k-min}$, and endow the space of unital k-positive linear maps $S_k(\cl S)=\{\phi:\cl S\to M_k\, | \phi \text{ is unital k-positive map}\,\}$ with the corresponding weak$^*$-topology. Then $S_k(\cl S)$ is a compact space, and the map
$$\Gamma:\cl S\to M_k(C(S_k(\cl S)))\text{ given by } \Gamma(a)(\phi)=\phi(a)$$ is an injective map that is an order isomorphism onto its range with the property that $\Gamma(e)=I_k$. Furthermore, $\Gamma$ is an isometry with respect to the k-minimal order norm on $\cl S$ and the sup norm on $M_k(C(S_k(\cl S)))$.
\end{thm}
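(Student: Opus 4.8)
The plan is to verify the several assertions in turn, drawing almost entirely on the preliminary results. Write $X = S_k(\cl S)$ for brevity, and recall that $M_k(C(X))$ is naturally the C$^*$-algebra $C(X,M_k)$ of continuous $M_k$-valued functions on $X$, with C$^*$-norm equal to the sup norm $\|F\| = \sup_{\phi\in X}\|F(\phi)\|_{M_k}$ and with $F\ge 0$ precisely when $F(\phi)\ge 0$ in $M_k$ for every $\phi\in X$.

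First I would establish compactness of $X$. Equip the collection of all linear maps $\cl S\to M_k$ with the topology of pointwise (weak$^*$) convergence, realizing it inside the product $\prod_{a\in\cl S} M_k$. By the very definition of the k-minimal order norm as a supremum over $S_k(\cl S)$, each $\phi\in X$ satisfies $\|\phi(a)\|_{M_k}\le \|a\|_{k-min}$, so the coordinate $\phi(a)$ lies in the compact ball $\{M\in M_k : \|M\|\le \|a\|_{k-min}\}$. Tychonoff's theorem then makes the product of these balls compact, and it remains to see that $X$ is a closed subset. Unitality, $\phi(e)=I_k$, is a closed condition, and so is k-positivity: if $\phi_\lambda\to\phi$ pointwise then $\phi_\lambda^{(k)}(v)\to\phi^{(k)}(v)$ entrywise for each $v\in C_k$, and since the cone $M_k(M_k)^+$ is closed, $\phi^{(k)}(v)\ge 0$ survives in the limit. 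Hence $X$ is closed, and therefore compact.

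Next, $\Gamma$ is well defined into $M_k(C(X))$, since each $\Gamma(a)\colon\phi\mapsto\phi(a)$ is continuous by the definition of the weak$^*$-topology. The identity $\Gamma(e)(\phi)=\phi(e)=I_k$ holds for all $\phi\in X$ by unitality, so $\Gamma(e)$ is the constant function $I_k$, the unit of $M_k(C(X))$. For the order-isomorphism claim, if $a\in\cl S^+$ then every $\phi\in X$ is in particular positive, so $\Gamma(a)(\phi)=\phi(a)\ge 0$ for all $\phi$, giving $\Gamma(a)\ge 0$; conversely, if $\Gamma(a)\ge 0$ then $\phi(a)\ge 0$ for every $\phi\in S_k(\cl S)$, and Proposition~\ref{v=0} forces $a\in\cl S^+$. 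Thus both $\Gamma$ and its inverse on the range are positive, i.e.\ $\Gamma$ is an order isomorphism onto its image. Injectivity follows from the ``furthermore'' clause of Proposition~\ref{v=0}: if $\Gamma(a)=0$ then $\phi(a)=0$ for all $\phi\in X$, whence $a=0$.

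Finally, the isometry property is immediate from matching the two norms: for any $a\in\cl S$,
$$\|\Gamma(a)\| = \sup_{\phi\in X}\|\Gamma(a)(\phi)\|_{M_k} = \sup_{\phi\in S_k(\cl S)}\|\phi(a)\|_{M_k} = \|a\|_{k-min},$$
directly by the definition of the k-minimal order norm. I expect the only genuinely technical step to be the compactness argument, specifically the verification that $S_k(\cl S)$ is weak$^*$-closed and uniformly bounded; every other assertion reduces cleanly either to the separation result of Proposition~\ref{v=0} or to the definition of the k-minimal order norm.
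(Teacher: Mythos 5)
Your proposal is correct and follows essentially the same route as the paper: compactness of $S_k(\cl S)$ via boundedness plus weak$^*$-closedness (you unpack Alaoglu's theorem by hand through Tychonoff, whereas the paper simply cites Alaoglu after locating $S_k(\cl S)$ inside the unit ball of $M_k(\cl S^*)$), and then injectivity, the order-isomorphism property, and the isometry all reduced to Proposition~\ref{v=0} and the definition of $\|\cdot\|_{k-min}$, exactly as in the paper's argument.
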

\begin{proof} Let $\cl S$ be a given operator system with unit $e$. Then $(\cl S,\cl S^+,e)$ is an AOU space, and its dual $\cl S^*$ is a normed $*$-vector space. For fixed $k\in \bb N$, one can show that $M_k(\cl S^*)=\{\phi=(\phi_{ij}):\cl S\to M_k\, |\, \phi_{ij}\in \cl S^*\text{ for all }1\le i,j\le k\}$ is a normed $*$-vector space, too. Then the unit ball of $M_k(\cl S^*)$ is defined as
$$(M_k(\cl S^*))_1=\{\phi\in M_k(\cl S^*): \|\phi\|\le 1\}.$$ Endowing $\cl S$ with any order norm $|||\cdot|||$ makes $S_k(\cl S)$ a subset of the unit ball of $M_k(\cl S^*)$. In addition, suppose that $\{\phi_\lambda\}_{\lambda\in\Lambda}\subseteq S_k(\cl S)$ is a net of these maps, and $\lim \phi_\lambda=\phi$ in the weak$^*$-topology for some $\phi\in M_k(\cl S^*)$. Then for any $a\in \cl S^+$ we have that $\lim \phi_\lambda(a)=\phi(a)$ , and since $\phi_\lambda(a)\ge 0$  for all $\lambda$, it follows that $\phi(a)\ge 0$ for all $a\in \cl S^+$. Similarly, for any $A\in M_k(\cl S)^+$, we have that $\lim \phi^{(k)}_\lambda(A)=\phi^{(k)}(A)$ , and since $\phi^{(k)}_\lambda(A)\ge 0$ for all $\lambda$, it follows that $\phi^{(k)}(A)\ge 0$ for all $A\in M_k(\cl S)^+$. Hence $\phi$ is a k-positive linear map. Moreover, $\phi(e)=\lim \phi_\lambda(e)=\lim I_k=I_k$, i.e. $\phi$ is unital. Thus $S_k(\cl S)$ is closed in the weak$^*$-topology.\\ \indent In the case of the k-minimal order norm, we have $S_k(\cl S)\subseteq (M_k(\cl S^*_{k-min}))_1$, and the latter $(M_k(\cl S^*_{k-min}))_1\cong (M_k(\cl S_{k-min}))^*_1$. It follows from Alaoglu's Theorem\cite[Theorem 3.1]{JBC}, that $(M_k(\cl S_{k-min}))^*_1$ is compact in the weak$^*$-topology, which implies that $(M_k(\cl S^*_{k-min}))_1$ is compact, too. Since $S_k(\cl S)$ is a closed subset of this compact ball, we have that $S_k(\cl S)$ is compact in the weak$^*$-topology.\\
\indent Consider the continuous matrix-valued functions $\hat a: S_k(\cl S) \to M_k$ given  by $\hat a(\phi)=\phi(a) \in M_k.$ The collection of such continuous functions $\{\hat a: S_k(\cl S)\to M_k \}$ together with $\|\cdot\|_{k-min}$ norm, form the unital $C^*$-algebra $M_k(C(S_k(\cl S)))$, i.e. $$M_k(C(S_k(\cl S))) \equiv \big\{\hat a: S_k(\cl S)\to M_k | \hat a \text{ continuous matrix-valued function } \big\}.$$ Let $\Gamma: \cl S\to M_k(C(S_k(\cl S)))$ be the map given by $\Gamma(a)(\phi)=\phi(a)$. If $\Gamma(a)=0$ for some $a\in \cl S$, then $\phi(a)=0$ for all $\phi \in S_k(\cl S)$. It follows from Proposition~\ref{v=0} that $a=0$. Therefore, $\Gamma$ is one-to-one.\\
\indent In addition, if $a\in \cl S^+$, then  for any $\phi\in S_k(\cl S)$ we have that $\Gamma(a)(\phi)=\phi(a)\in M_k^+$ by the positivity of $\phi$. Hence the function $\Gamma(a)$ takes on nonnegative values and $\Gamma(a)\in M_k(C(S_k(\cl S)))^+$. Conversely, if $\Gamma(a) \in M_k(C(S_k(\cl S)))^+$, then for all $\phi\in S_k(\cl S)$ we have that $\phi(a)= \Gamma(a)(\phi) \ge 0$. This implies $a\in \cl S^+$ by Proposition~\ref{v=0}. Therefore, $\Gamma$ is an order isomorphism onto its range. Finally, if $a\in \cl S$, then
\begin{eqnarray*} \|a\|_{k-min}&=&\sup\{\|\phi(a)\|\,|\, \phi \in S_k(\cl S)\,\}\\
&=&\sup\{\|\Gamma(a)(\phi)\|: \phi \in S_k(\cl S)\}\\
&=&\|\Gamma(a)\|_\infty.\end{eqnarray*}
so that $\Gamma$ is an isometry with respect to the k-minimal order norm on $\cl S$ and the sup norm on $M_k(C(S_k(\cl S)))$.
\end{proof}

\begin{remark} Since unital $C^*$-algebras are operator systems, the order isomorphism map $\Gamma$ of Kadison's Representation Theorem induces a new operator system structure $\{C_n\}_{n=1}^\infty$ on $\cl S$. We have $C_1= \cl S^+= \Gamma^{-1}(P_1)$, where $P_1$ denotes the set of nonnegative matrix-valued continuous functions on $S_k(\cl S)$. In addition, we say $(a_{ij})\in C_n$ if and only if $(\Gamma(a_{ij})) \in M_n(M_k(C(S_k(\cl S))))^+$, if and only if $(\phi(a_{ij}))\in M_{nk}^+$ for every $\phi\in S_k(\cl S)$ .
\end{remark}

\begin{defn} Let $\cl S$ be an operator system with unit $e$. For each $n \in \bb N$ set $$C_n^{k-min}(\cl S)=\Big\{(a_{ij})\in M_n(\cl S): (\phi(a_{ij})) \ge 0, \text{ for all } \phi \in S_k(\cl S) \Big\}, $$  $\cl C^{k-min}(\cl S)=\{C_n^{k-min}(\cl S)\}_{n=1}^\infty$ and define {\emph{OMIN}}$_k(\cl S)=(\cl S, \cl C^{k-min}(\cl S),e)$.
\end{defn}

\noindent By the definition and the remark above, $\cl C^{k-min}(\cl S)$ is the operator system structure on $\cl S$ induced by the inclusion of $\cl S$ into $M_k(C(S_k(\cl S)))$. We call $\cl C^{k-min}(\cl S)$ {\bf the super k-minimal operator system structure} on $\cl S$, and we call OMIN$_k(\cl S)$ {\bf the super k-minimal operator system}. \\\\\\

\noindent{\bf Properties of super k-minimal operator system structures on $\cl S$:}\\

\noindent $(1)$ When $k=1$, $C_n^{1-min}(\cl S)=C_n^{min}(\cl S)$ for all $n\in \bb N$:
\begin{eqnarray*}\,\,\, (a_{ij}) \in C_n^{1-min}(\cl S) & \iff & (s(a_{ij})) \in M_n^+, \text{ for all } s \in S_1(\cl S)=S(\cl S),\\ &\iff & \alpha^* (a_{ij})\alpha \in \cl S^+, \text{ for all } \alpha \in \bb C^n.
\end{eqnarray*}

\noindent $(2)$ $M_n(\cl S)^+\subseteq C_n^{k-min}(\cl S)\subseteq C_n^{min}(\cl S)$, for all $n \in \bb Z^+$:\\\\ Note that $M_n(\cl S)^+\subseteq C_n^{k-min}(\cl S)$ is obvious by the definition of $C_n^{k-min}(\cl S)$. Now, let $(a_{ij})\in C_n^{k-min}(\cl S)$ for some fixed $k\in \bb Z^+$, and let $\alpha \in \bb C^n$. Then
$$ \,\,\,\, 0 \le  (\alpha^* \otimes I_k)(\phi(a_{ij}))(\alpha \otimes I_k)  = \phi( \alpha^*[a_{ij}]\alpha),\text{ for all }\phi \in S_k(\cl S).$$ This implies $\alpha^*(a_{ij})\alpha \in \cl S^+$, for all $\alpha \in \bb C^n$, i.e. $(a_{ij}) \in C_n^{min}(\cl S)$.\\\\
\noindent $(3)$ $C_n^{h-min}(\cl S) \subseteq C_n^{k-min}(\cl S)$ for all $h \ge k$:\\\\ Let $(a_{ij}) \in C_n^{h-min}(V)$. The equality holds when $h=k$. Suppose $h>k$ and let $\phi\in S_k(\cl S)$ and $s\in S(\cl S)$. Define $\Phi:\cl S\to M_h$ by  $$\Phi =\phi \oplus \underbrace{s \oplus \dots \oplus s}_{\text{(h-k) times}}=\left[\begin{matrix}{\begin{matrix}\phi & \\ & s\end{matrix}} & \text{\LARGE{0}}\\ \text{\LARGE{0}}& {\begin{matrix}\ddots & \\ & s\end{matrix}}\end{matrix}\right].$$  One can easily verify that $\Phi$ is a well-defined positive linear function with $\Phi(e)=I_h$, i.e. $\Phi \in S_h(\cl S)$. This implies $(\Phi(a_{ij})) \ge 0$. Thus, we have:
 $$0\le (\Phi(a_{ij})) = \left[\begin{matrix}{\begin{matrix}\phi(a_{ij}) & \\ & s(a_{ij})\end{matrix}} & \text{\LARGE{0}}\\ \text{\LARGE{0}}& {\begin{matrix}\ddots & \\ & s(a_{ij})\end{matrix}}\end{matrix}\right]_{i,j}.$$ By the canonical reshuffling, we obtain:
\begin{eqnarray*} 0\le(\Phi(a_{ij})) & \iff & {\left[\begin{matrix}{\begin{matrix}(\phi(a_{ij})) & \\ & (s(a_{ij}))\end{matrix}} & \text{\LARGE{0}}\\ \text{\LARGE{0}}& {\begin{matrix}\ddots & \\ & (s(a_{ij}))\end{matrix}}\end{matrix}\right]\ge 0}  \\ \\ & \iff& (\phi(a_{ij})) \ge 0, \text{ for all }\phi \in S_k(\cl S)\\ &\iff & (a_{ij}) \in C^{k-min}_n(\cl S).
\end{eqnarray*}\\
\noindent $(4)$ The identity map $\imath : \text{OMIN}_h(\cl S)\to \text{OMIN}_k(\cl S)$ is completely positive, whenever  $h\ge k$. \\

\begin{prop} Let $\cl S$ be an operator system with unit $e$, $f:\cl S\to M_k$ be a k-positive linear map, and $$C^k_n(\cl S) =\big\{(a_{ij})\in M_n(\cl S)|\, (f(a_{ij})) \in M_{nk}^+,\,  f: \cl S\to M_k \text{\emph{ k-positive}} \big\}.$$ Then $\{C^k_n(\cl S)\}_{n=1}^\infty$ is the super k-minimal operator system structure on $\cl S$.
\end{prop}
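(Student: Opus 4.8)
The statement asserts that the family $\{C^k_n(\cl S)\}_{n=1}^\infty$, defined by testing against \emph{all} $k$-positive maps $f:\cl S\to M_k$, coincides with $\cl C^{k-min}(\cl S)=\{C_n^{k-min}(\cl S)\}_{n=1}^\infty$, which is defined by testing only against the \emph{unital} $k$-positive maps in $S_k(\cl S)$. My plan is to establish the two inclusions $C^k_n(\cl S)\subseteq C_n^{k-min}(\cl S)$ and $C_n^{k-min}(\cl S)\subseteq C^k_n(\cl S)$ separately for each $n$. The first is immediate: every $\phi\in S_k(\cl S)$ is in particular a $k$-positive map $\cl S\to M_k$, so the defining requirement of $C^k_n(\cl S)$ is \emph{a priori} more restrictive than that of $C_n^{k-min}(\cl S)$; hence any $(a_{ij})$ positive against all $k$-positive maps is positive against the unital ones, which gives $C^k_n(\cl S)\subseteq C_n^{k-min}(\cl S)$.

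For the substantive reverse inclusion I would fix $(a_{ij})\in C_n^{k-min}(\cl S)$ together with an arbitrary nonzero $k$-positive map $f:\cl S\to M_k$, and aim to show $(f(a_{ij}))\ge 0$. The idea is to normalize $f$ into a unital map of smaller matrix size. Since $f$ is $k$-positive it is positive, so $f(e)\ge 0$; let $r=\mathrm{rank}\,f(e)$, with $1\le r\le k$. Invoking Lemma~\ref{ued}, Corollary~\ref{kued} and Remark~\ref{cum} I would write $f$ in the form
\[
f(\cdot)=S\,\psi(\cdot)\,S^*,
\]
where $\psi\in S_r(\cl S)$ is a unital $r$-positive map into $M_r$ and $S=U\left[\begin{smallmatrix}D_r^{1/2}\\ 0\end{smallmatrix}\right]$ is the $k\times r$ matrix built from the unitary $U$ and the positive invertible diagonal $D_r$ that diagonalize $f(e)=U(D_r\oplus 0)U^*$.

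I would then amplify and use the nesting of the super cones. Because $r\le k$, property~(3) above yields $C_n^{k-min}(\cl S)\subseteq C_n^{r-min}(\cl S)$, so $(a_{ij})\in C_n^{r-min}(\cl S)$, which by definition means $(\psi(a_{ij}))\ge 0$ in $M_{nr}$. Applying the amplification identities of Section~\ref{prel} to the composition $f=S\psi(\cdot)S^*$ gives
\[
(f(a_{ij}))=(I_n\otimes S)\,(\psi(a_{ij}))\,(I_n\otimes S^*),
\]
a congruence of the positive matrix $(\psi(a_{ij}))$, hence an element of $M_{nk}^+$. Therefore $(a_{ij})\in C^k_n(\cl S)$, which completes the second inclusion and yields $C^k_n(\cl S)=C_n^{k-min}(\cl S)$ for every $n$.

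I expect the main obstacle to be the normalization step: extracting the genuinely unital map $\psi\in S_r(\cl S)$ together with the rectangular factor $S$ from a general $k$-positive $f$ via the rank decomposition of $f(e)$, and confirming that the amplified congruence formula is valid for the rectangular $S$ (which conjugates the target $M_r$, so appears as $I_n\otimes S$ rather than $S\otimes I_n$). Once $f$ is in the form $S\psi(\cdot)S^*$, the positivity conclusion is routine; the only other point that needs care is using the cone nesting in the correct direction, namely $C_n^{k-min}(\cl S)\subseteq C_n^{r-min}(\cl S)$ for $r\le k$, in order to deploy property~(3).
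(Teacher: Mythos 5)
Your proposal is correct and follows essentially the same route as the paper: the easy inclusion is observed to be definitional, and the reverse inclusion is obtained by normalizing an arbitrary $k$-positive $f$ to a unital map via a congruence by a square root of $f(e)$ and then conjugating the amplified positive matrix. The only cosmetic difference is that the paper cites \cite[Exercise 6.2]{VIP} to write $f(\cdot)=P^{1/2}\psi(\cdot)P^{1/2}$ with $\psi\in S_k(\cl S)$ and $P=f(e)$ directly, whereas you handle a possibly singular $P$ by dropping to a rank-$r$ unital map $\psi\in S_r(\cl S)$ and invoking the nesting $C_n^{k-min}(\cl S)\subseteq C_n^{r-min}(\cl S)$ --- both are valid.
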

\begin{proof} It suffices to show that $C^k_n(\cl S)=C^{k-min}_n(\cl S)$ for all n. One can see that $C^k_n(\cl S)\subseteq C^{k-min}_n(\cl S)$ is trivial, since $S_k(\cl S)$ is just a subset of all k-positive linear maps from $\cl S$ to $M_k$. On the other hand, let $(a_{ij})\in C^{k-min}_n(\cl S)$ and let $\phi:\cl S\to M_k$ be a k-positive map with $\phi(e)=P\ge 0$. Then there exists a unital k-positive map $\psi\in S_k(\cl S)$ such that $\phi(\cdot)=P^{1/2}\psi(\cdot)P^{1/2}$ \cite[Exercise 6.2]{VIP}. Hence, we have $$(\phi(a_{ij}))=(P^{1/2}\psi(a_{ij})P^{1/2})=(I_k\otimes P^{1/2})(\psi(a_{ij})(I_k\otimes P^{1/2}).$$ This shows that $(a_{ij})\in C_n^k(\cl S)$ and $C^{k-min}_n(\cl S) \subseteq C^k_n(\cl S)$. Hence, $C^k_n(\cl S)= C_n^{k-min}(\cl S)$ is the super k-minimal operator system structure on $\cl S$.
\end{proof}

\begin{remark} The above result shows that we can define the super k-minimal operator system structure in a more general way, as $$C_n^{k-min}(\cl S)= \big\{(a_{ij})\in M_n(\cl S)|\, (\phi(a_{ij}))\ge 0, \, \phi:\cl S\to M_k \text{ k-positive map } \big\}.$$
\end{remark}

\begin{lemma}\label{kposx} Let $\cl S$  be an operator system and let $X$ be a compact space. If $\psi: \cl S\to M_k(C(X))$ is k-positive, then $\psi$ is completely positive.
\end{lemma}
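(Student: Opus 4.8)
The plan is to exploit the identification $M_k(C(X)) \cong C(X, M_k)$, under which an element is positive precisely when it takes values in $M_k^+$ at every point of $X$; more generally $M_n(M_k(C(X))) \cong C(X, M_{nk})$, so positivity at level $n$ is pointwise positivity in $M_{nk}$. The idea is to decompose $\psi$ into its fiber maps at the points of $X$, verify that each fiber map is completely positive via Proposition~\ref{kpos}, and then reassemble. First I would fix a point $x \in X$ and consider the evaluation map $\mathrm{ev}_x : C(X) \to \bb C$, which is a unital $*$-homomorphism and hence completely positive. Its amplification $\Phi_x = \mathrm{ev}_x \otimes \mathrm{id}_{M_k} : M_k(C(X)) \to M_k$, which simply evaluates each matrix entry at $x$, is again a unital $*$-homomorphism and therefore completely positive.

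Next I would set $\psi_x = \Phi_x \circ \psi : \cl S \to M_k$ and check that $\psi_x$ is $k$-positive. Indeed, for $A \in M_k(\cl S)^+$ we have $\psi^{(k)}(A) \in M_k(M_k(C(X)))^+$ by hypothesis, and since $\Phi_x$ is completely positive its amplification $\Phi_x^{(k)}$ carries this cone into $M_k(M_k)^+$. As $\psi_x^{(k)} = \Phi_x^{(k)} \circ \psi^{(k)}$, we obtain $\psi_x^{(k)}(A) \ge 0$. Thus $\psi_x : \cl S \to M_k$ is a $k$-positive map whose range is $M_k$, so Proposition~\ref{kpos} applies and $\psi_x$ is completely positive, for every $x \in X$.

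Finally I would assemble the pointwise information. Fix $n \in \bb N$ and $A \in M_n(\cl S)^+$. Under the identification $M_n(M_k(C(X))) \cong C(X, M_{nk})$, the element $\psi^{(n)}(A)$ is the function $x \mapsto [\psi^{(n)}(A)](x)$, and evaluating entrywise gives $[\psi^{(n)}(A)](x) = \psi_x^{(n)}(A)$. Since each $\psi_x$ is completely positive, $\psi_x^{(n)}(A) \ge 0$ for every $x \in X$, and positivity in $C(X, M_{nk})$ being pointwise, we conclude $\psi^{(n)}(A) \ge 0$. As $n$ was arbitrary, $\psi$ is completely positive.

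The steps requiring the most care are the bookkeeping of the identifications $M_k(C(X)) \cong C(X, M_k)$ and $M_n(M_k(C(X))) \cong C(X, M_{nk})$, and confirming that the entrywise evaluation $\Phi_x$ is genuinely a unital $*$-homomorphism, which is what makes its complete positivity automatic rather than something to be argued. The real conceptual engine is Proposition~\ref{kpos}: it collapses the verification at every level $n$ down to the single level $k$ for each fiber map $\psi_x$, and this is precisely what upgrades $k$-positivity of $\psi$ into full complete positivity once the fibers are recombined.
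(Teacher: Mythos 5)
Your proof is correct and follows essentially the same route as the paper: compose $\psi$ with the point-evaluation $*$-homomorphisms $M_k(C(X))\to M_k$, apply Proposition~\ref{kpos} to each fiber map, and recover complete positivity of $\psi$ from pointwise positivity in $C(X,M_{nk})$. Your write-up is in fact slightly more careful than the paper's in spelling out the final reassembly step.
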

\begin{proof} Define $\pi_x  : M_k(C(X))\to M_k$ to be the point-evaluation matrix function, i.e. $\pi_x((f_{ij}))=(f_{ij}(x))$. It is clear that $\pi_x$ is a well-defined $*$-homomorphism. Moreover, $\pi_x$ is completely positive. Consider $\pi_x \circ \psi :\cl S\to M_k$.  Let $(a_{ij})\in M_k(\cl S)^+$, then $(\psi(a_{ij})) \in M_k(M_k(C(X)))^+$, which implies $(\pi_x(\psi(a_{ij}))) \in M_{k^2}^+$, since $\pi_x$ is a completely positive map. The k-positivity of $\psi$ implies $\pi_x \circ \psi: \cl S\to M_k$ is a k-positive map, and therefore completely positive by Proposition~\ref{kpos}. As a result, $\psi$ is completely positive.
\end{proof}

\begin{thm} Let $\cl S$ be an operator system with unit $e$. If  $(W,\{C_n\}_{n=1}^\infty)$ is  a matrix ordered $*$-vector space and $\phi: W \to \text{\emph{OMIN}}_k(\cl S)$ is k-positive, then $\phi$ is completely positive. \\ Moreover, if $\tilde{\cl S} = (\cl S,\{C_n\}_{n=1}^\infty,e)$ is another operator system on $\cl S$ with $C_k=M_k(\cl S)^+$ such that for every operator system $W$, any k-positive map $\psi: W\to \tilde{\cl S}$ is completely positive, then the identity map on $\cl S$ is a complete order isomorphism from $\tilde{\cl S}$ onto {\emph{OMIN}}$_k(\cl S)$.
\end{thm}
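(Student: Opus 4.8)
The plan is to prove the two assertions separately, with the first providing the tool for the second. For the first statement, I would mimic the structure of Lemma~\ref{kposx}. Suppose $\phi: W \to \text{OMIN}_k(\cl S)$ is k-positive. The key observation is that, by Theorem~\ref{krt}, the map $\Gamma: \cl S \to M_k(C(S_k(\cl S)))$ is a complete order isomorphism onto its range, so OMIN$_k(\cl S)$ sits completely order isomorphically inside $M_k(C(X))$ for the compact space $X = S_k(\cl S)$. Thus $\Gamma \circ \phi : W \to M_k(C(X))$ is k-positive (composition with the complete order isomorphism $\Gamma$ preserves k-positivity), and I would like to invoke Lemma~\ref{kposx} to conclude it is completely positive, whence $\phi = \Gamma^{-1} \circ (\Gamma \circ \phi)$ is completely positive as well. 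The one caveat is that Lemma~\ref{kposx} is stated for $\psi$ with domain an operator system $\cl S$, whereas here the domain $W$ is only a matrix ordered $*$-vector space; I would check that the proof of Lemma~\ref{kposx} in fact only uses the matrix ordering of the domain together with Proposition~\ref{kpos}, which itself is stated for matrix ordered $*$-vector spaces, so the argument goes through verbatim for the more general $W$.

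For the second (moreover) statement, the goal is to show the identity map $\imath: \tilde{\cl S} \to \text{OMIN}_k(\cl S)$ is a complete order isomorphism, i.e. that $C_n = C_n^{k-min}(\cl S)$ for all $n$. I would establish the two inclusions separately. For $C_n \subseteq C_n^{k-min}(\cl S)$: the hypothesis says every k-positive map into $\tilde{\cl S}$ is completely positive. Applying this to the \emph{identity} map $\imath: \tilde{\cl S} \to \tilde{\cl S}$, which is trivially completely positive hence k-positive, is not quite what I want; instead I would apply the first part of the theorem together with the hypothesis $C_k = M_k(\cl S)^+$ to show that the identity $\imath: \tilde{\cl S} \to \text{OMIN}_k(\cl S)$ is k-positive, and then invoke the \emph{first} statement of this very theorem (with $W = \tilde{\cl S}$) to upgrade it to completely positive, giving $C_n \subseteq C_n^{k-min}(\cl S)$.

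The reverse inclusion $C_n^{k-min}(\cl S) \subseteq C_n$ is where I expect to use the hypothesis that k-positive maps into $\tilde{\cl S}$ are completely positive. Here I would apply that hypothesis to the identity map $\imath: \text{OMIN}_k(\cl S) \to \tilde{\cl S}$: since $C_k^{k-min}(\cl S) = M_k(\cl S)^+ = C_k$ (the first equality holding because, by Property~(2), $M_k(\cl S)^+ \subseteq C_k^{k-min}(\cl S)$, while $C_k^{k-min}(\cl S) \subseteq C_k^{min}(\cl S)$ and the $k$-th level of OMIN$_k$ coincides with $M_k(\cl S)^+$ by the defining condition that $\phi^{(k)}$ be positive for all $\phi \in S_k$), the identity map is k-positive from OMIN$_k(\cl S)$ into $\tilde{\cl S}$. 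The hypothesis then forces it to be completely positive, which is exactly $C_n^{k-min}(\cl S) \subseteq C_n$ for all $n$. The main obstacle will be pinning down the claim $C_k^{k-min}(\cl S) = M_k(\cl S)^+$ cleanly: I must verify that at level $k$ the super k-minimal cone collapses to the original positive cone of $\cl S$, which is precisely the level where k-positivity and complete positivity are first forced to agree, and this is the crux that makes the hypothesis $C_k = M_k(\cl S)^+$ the right normalization to compare the two structures. Combining both inclusions yields $C_n = C_n^{k-min}(\cl S)$ for every $n$, so $\imath$ is a complete order isomorphism.
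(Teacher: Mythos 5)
Your proposal follows essentially the same route as the paper: the first part identifies OMIN$_k(\cl S)$ with a subspace of $M_k(C(S_k(\cl S)))$ via Theorem~\ref{krt} and applies Lemma~\ref{kposx}, and the second part shows the identity map is k-positive in both directions using $C_k^{k-min}(\cl S)=M_k(\cl S)^+=C_k$, upgrading one direction by the first part and the other by the hypothesis, exactly as the paper does. The one soft spot you flag — that $C_k^{k-min}(\cl S)=M_k(\cl S)^+$, where your intermediate step $C_k^{k-min}(\cl S)\subseteq C_k^{min}(\cl S)$ does not by itself give the inclusion into $M_k(\cl S)^+$ — is likewise asserted without proof in the paper, so your argument is at the same level of rigor as the original.
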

\begin{proof} $(i)$ It is clear that, up to complete order isomorphism, OMIN$_k(\cl S)$ can be identified with a subspace of $M_k(C(S_k(\cl S)))$. We know that $S_k(\cl S)$ is a compact space. Substituting $X= S_k(\cl S)$ in Lemma~\ref{kposx}, we get $\phi: W\to M_k(C(S_k(\cl S)))$ is completely positive, i.e.  $\phi: W\to \text{OMIN}_k(\cl S)$ is completely positive.\\
$(ii)$ Now, let $\tilde{\cl S} = (\cl S,\{C_n\}_{n=1}^\infty,e)$ be another operator system with $C_k=M_k(\cl S)^+$ such that for every operator system $W$, any k-positive map $\psi: W\to \tilde{\cl S}$ is completely positive. Note that $C_k=M_k(\cl S)^+$ implies that $C_i=M_i(\cl S)^+$ for all $i=1,2,\dots,k$. Moreover, we know that $M_n(\cl S)^+\subseteq C_n^{k-min}(\cl S)),$ with equality holding for $1\le n\le k$. Hence, $$M_k(\cl S)^+= M_k(\tilde{\cl S})^+=C_k^{k-min}(\cl S). \text{  (*)}$$ Let $\imath: \tilde{\cl S}\to \text{OMIN}_k(\cl S)$ be the identity map on $\cl S$. By the identity $(*)$ above, both $\imath$ and $\imath^{-1}$ are k-positive maps. Then, by $(i)$, $\imath$ is completely positive, and by the assumption, $\imath^{-1}$ is completely positive. Since $\imath$ is also unital, we have that $\tilde{\cl S}$ and OMIN$_k(\cl S)$ are completely order isomorphic. 

\end{proof}

\begin{prop}Let $\cl S$ be an operator system with unit $e$ and fix $k\in \bb N$. Then the identity map $\text{\emph{id}}$ from \emph{OMIN}$_k(\cl S)$ to $\cl S$  is completely bounded with $\|\text{\emph{id}}\|_{cb}=C$ if and only if for every operator system $\cl T$, every unital k-positive map $\phi:\cl T\to \cl S$ is completely bounded and the supremum of the completely bounded norms of all such maps is $C$.
\end{prop}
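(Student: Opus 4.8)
The plan is to reduce the whole statement to the single equality
\[
\|\mathrm{id}\colon \mathrm{OMIN}_k(\cl S)\to \cl S\|_{cb}
=\sup\bigl\{\|\phi\|_{cb} \;:\; \cl T \text{ an operator system},\ \phi\colon \cl T\to\cl S \text{ unital and } k\text{-positive}\bigr\},
\]
understood as an identity in the extended reals $[0,\infty]$. Once this is proved the proposition is immediate: writing $C$ for the common value, the left-hand side being finite and equal to $C$ says precisely that $\mathrm{id}$ is completely bounded with $\|\mathrm{id}\|_{cb}=C$, while the right-hand side being equal to the finite number $C$ forces every such $\phi$ to be completely bounded (each $\|\phi\|_{cb}\le C<\infty$) with supremal cb-norm exactly $C$.

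Before the two inequalities I would record one structural fact. By the equality case $n=k$ of Property $(2)$ of the super $k$-minimal cones, one has $C_k^{k-min}(\cl S)=M_k(\cl S)^+$; that is, OMIN$_k(\cl S)$ and the natural structure on $\cl S$ have the \emph{same} $k$-th matrix cone. Since $k$-positivity of a map (interpreted, as for the $M_k$-valued case, as positivity of the $k$-th amplification $\phi^{(k)}$) constrains only the $k$-th cone of the codomain, it follows that a linear map $\phi\colon\cl T\to\cl S$ is $k$-positive into $\cl S$ if and only if it is $k$-positive when regarded as a map into OMIN$_k(\cl S)$. This equivalence is the hinge of the argument.

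For the inequality $\le$ (the supremum dominates $\|\mathrm{id}\|_{cb}$ from above): given a unital $k$-positive $\phi\colon\cl T\to\cl S$, view it as a map $\tilde\phi\colon\cl T\to \mathrm{OMIN}_k(\cl S)$. By the equivalence just noted $\tilde\phi$ is $k$-positive into OMIN$_k(\cl S)$, so the universal-property theorem (every $k$-positive map into OMIN$_k(\cl S)$ is completely positive) makes $\tilde\phi$ completely positive; being unital it is a complete contraction, $\|\tilde\phi\|_{cb}=1$ (the operator-system analogue of the fact, proved earlier here for $M_k$-valued maps, that a unital positive map has norm equal to $\|\phi(e)\|=1$). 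Since $\phi=\mathrm{id}\circ\tilde\phi$ as maps into $\cl S$, submultiplicativity of the cb-norm gives $\|\phi\|_{cb}\le\|\mathrm{id}\|_{cb}\cdot\|\tilde\phi\|_{cb}=\|\mathrm{id}\|_{cb}$; taking the supremum over all admissible $\cl T,\phi$ yields $\sup\le\|\mathrm{id}\|_{cb}$.

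For the reverse inequality I would simply observe that the identity map is itself a legitimate test map: with $\cl T=\mathrm{OMIN}_k(\cl S)$, the map $\mathrm{id}\colon\mathrm{OMIN}_k(\cl S)\to\cl S$ is unital, and it is $k$-positive because $\mathrm{id}^{(k)}$ carries $C_k^{k-min}(\cl S)=M_k(\cl S)^+$ into $M_k(\cl S)^+$. Hence $\|\mathrm{id}\|_{cb}$ is one of the quantities over which the supremum is taken, so $\|\mathrm{id}\|_{cb}\le\sup$. Combining the two inequalities gives the displayed equality, and the asserted equivalence follows. I do not expect a genuine obstacle here; the only point requiring care is the identification $C_k^{k-min}(\cl S)=M_k(\cl S)^+$ and the resulting invariance of $k$-positivity under replacing the target structure of $\cl S$ by that of OMIN$_k(\cl S)$, together with a consistent reading of ``$k$-positive'' for maps into a general operator system as positivity of the $k$-th amplification. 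With those in hand the rest is the universal property plus submultiplicativity of $\|\cdot\|_{cb}$.
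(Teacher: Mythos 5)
Your proof is correct, and it is essentially the argument the paper relies on: the paper gives no proof of its own here but simply cites \cite[Proposition 5.3]{PTT}, whose $k=1$ proof is exactly your factorization $\phi=\mathrm{id}\circ\tilde\phi$ with $\tilde\phi$ unital completely positive (hence completely contractive) by the universal property of OMIN$_k$, together with the observation that $\mathrm{id}$ is itself an admissible test map because $C_k^{k-min}(\cl S)=M_k(\cl S)^+$. The only blemish is the parenthetical label on your first inequality, which says ``the supremum dominates $\|\mathrm{id}\|_{cb}$ from above'' while what you actually (correctly) prove there is $\sup\le\|\mathrm{id}\|_{cb}$.
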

\begin{proof}Refer to \cite[Proposition 5.3]{PTT}.
\end{proof}

\section{The Super k-Maximal Operator System Structures on $\cl S$}\label{kmax}

Let $\cl S$ be an operator system with unit $e$. For all $n\in \bb N$, we have that $M_n(\cl S)=M_n\otimes \cl S$, the natural algebraic tensor product. Moreover, we have that $M_n(\cl S)_{sa} =(M_n)_{sa}\otimes \cl S_{sa}$, where the right-hand side is the algebraic tensor of real vector spaces.\\
\indent Recall the strongest matrix ordering $\cl D^{max}(\cl S)=\{D_n^{max}(\cl S)\}_{n=1}^\infty$ on the AOU space $(\cl S,\cl S^+,e)$, where each matrix cone $D_n^{max}(\cl S)$ is given by
\begin{eqnarray*} D_n^{max}(\cl S) &=&\bigg\{\sum_{i=1}^k a_i\otimes s_i: s_i\in \cl S^+, a_i \in M_n^+, 1\le i\le k, k \in \bb N \bigg\}\\
&=&\bigg\{ A\, \text{diag}(s_1,\dots,s_m)\, A^*: A \in M_{n,m}, s_i \in \cl S^+, m\in \bb N\bigg\},\end{eqnarray*} with $e$ being just a matrix order unit for this ordering on a general operator system $\cl S$, as was shown in~\cite{PTT}.

\begin{defn} Let $\cl S$ be an operator system with unit $e$. For some fixed $k\in \bb N$, set
\begin{multline*} D_n^{k-max}(\cl S)=\big\{ADA^*\,|\, A \in M_{n,mk},\, D = \text{diag}(D_1,\dots,D_m),\,\text{\emph{ where }}\\ D_l \in M_k(\cl S)^+,\, 1\le l\le m,\, m\in \bb N\, \big\}\end{multline*} and $\cl D^{k-max}(\cl S)=\{D_n^{k-max}(\cl S)\}_{n=1}^\infty$.
\end{defn}

\begin{prop}\label{kmaxo} Let $\cl S$ be an operator system with unit $e$. Then  $\cl D^{k-max}(\cl S)$ is a matrix ordering on $\cl S$ and $e$ is a matrix order unit for this ordering. In particular, $\cl D^{1-max}(\cl S)$ is the strongest matrix ordering on $\cl S$.
\end{prop}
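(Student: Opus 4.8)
The plan is to verify that $\cl D^{k-max}(\cl S)$ meets the three requirements in the definition of a matrix ordering (each $D_n^{k-max}(\cl S)$ is a proper cone of self-adjoint elements, and the family is compatible) together with the matrix-order-unit property for $e$, and then to read off the case $k=1$. The cone axioms are the routine part: every $ADA^*$ is self-adjoint because $D=\mathrm{diag}(D_1,\dots,D_p)$ is, and absorbing $\sqrt r$ into $A$ gives closure under multiplication by $r\in\bb R^+$. For closure under addition I would use the stacking identity $ADA^*+BEB^*=[A\ B]\,(D\oplus E)\,[A\ B]^*$, observing that $D\oplus E$ is again block-diagonal with $k\times k$ blocks from $M_k(\cl S)^+$. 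Compatibility is equally direct: for a generator $ADA^*$ with $A\in M_{n,pk}$ and any $X\in M_{n,m}$, one has $X^*(ADA^*)X=(X^*A)D(X^*A)^*$ with $X^*A\in M_{m,pk}$, so it again lies in $D_m^{k-max}(\cl S)$.

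For properness the key observation is the containment $D_n^{k-max}(\cl S)\subseteq M_n(\cl S)^+$, where $M_n(\cl S)^+$ is the natural operator system cone. Indeed $D=\mathrm{diag}(D_1,\dots,D_p)$ is a direct sum of elements of $M_k(\cl S)^+$, hence lies in $M_{pk}(\cl S)^+$: each summand $0\oplus\cdots\oplus D_l\oplus\cdots\oplus 0$ is positive by compatibility of the natural ordering, and these add to $D$. Applying compatibility of the natural ordering once more with $X=A^*$ gives $ADA^*=X^*DX\in M_n(\cl S)^+$. Since $\cl S$ is an operator system, $M_n(\cl S)^+$ is proper, so $D_n^{k-max}(\cl S)\cap(-D_n^{k-max}(\cl S))\subseteq M_n(\cl S)^+\cap(-M_n(\cl S)^+)=\{0\}$, which gives properness.

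The substantive step is the matrix-order-unit property: for every $v\in M_n(\cl S)_{sa}$ there should be $t>0$ with $te_n-v\in D_n^{k-max}(\cl S)$, where $e_n=I_n\otimes e$. Using $M_n(\cl S)_{sa}=(M_n)_{sa}\otimes_{\bb R}\cl S_{sa}$ together with closure of the cone under addition, it suffices to dominate a single generator $v=a\otimes s$ with $a=a^*\in M_n$ and $s=s^*\in\cl S$. I would diagonalize $a=U\Lambda U^*$ with $\Lambda=\mathrm{diag}(\lambda_1,\dots,\lambda_n)$; since $Ue_nU^*=e_n$ and, by compatibility, conjugation by the scalar unitary $U$ preserves $D_n^{k-max}(\cl S)$, it is enough to dominate $\Lambda\otimes s=\mathrm{diag}(\lambda_1 s,\dots,\lambda_n s)$. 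Because $e$ is an order unit for $\cl S$, for $t$ large every $te-\lambda_i s\in\cl S^+$, so $te_n-\Lambda\otimes s=\mathrm{diag}(te-\lambda_1 s,\dots,te-\lambda_n s)$ is diagonal with entries in $\cl S^+$. The final point is that any such diagonal matrix lies in $D_n^{k-max}(\cl S)$: each $p_i\in\cl S^+$ embeds as the corner of $D_i=p_i\oplus 0_{k-1}\in M_k(\cl S)^+$, and choosing $A\in M_{n,nk}$ to extract the $(1,1)$-corners of the blocks of $D=\mathrm{diag}(D_1,\dots,D_n)$ yields $ADA^*=\mathrm{diag}(p_1,\dots,p_n)$. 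This establishes the order-unit property. For $k=1$ we have $M_1(\cl S)^+=\cl S^+$, so the defining formula for $D_n^{1-max}(\cl S)$ becomes verbatim that of $D_n^{max}(\cl S)$, recalled above to be the strongest matrix ordering; hence $\cl D^{1-max}(\cl S)=\cl D^{max}(\cl S)$.

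The cone and compatibility checks, and the properness argument (which hinges on the clean containment $D_n^{k-max}(\cl S)\subseteq M_n(\cl S)^+$), should cause no trouble. I expect the only real work to lie in the matrix-order-unit step, specifically the reduction of an arbitrary self-adjoint $v$ to the generators $a\otimes s$ and the verification that a diagonal matrix with $\cl S^+$-entries already sits inside $D_n^{k-max}(\cl S)$ through the corner embedding $\cl S^+\hookrightarrow M_k(\cl S)^+$, $p\mapsto p\oplus 0_{k-1}$.
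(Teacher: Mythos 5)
Your proof is correct, and for the cone axioms, compatibility, properness (via the containment $D_n^{k-max}(\cl S)\subseteq M_n(\cl S)^+$), and the $k=1$ identification it follows the same route as the paper — though you justify the containment $D_n^{k-max}(\cl S)\subseteq M_n(\cl S)^+$ and the self-adjointness of the generators more explicitly, where the paper simply asserts them. The one place you genuinely diverge is the matrix-order-unit step: the paper disposes of it by quoting the known fact (from the OMAX construction in Paulsen--Todorov--Tomforde) that $e_n$ is an order unit for $(M_n(\cl S),D_n^{max}(\cl S))$ and then invoking the inclusion $D_n^{max}(\cl S)\subseteq D_n^{k-max}(\cl S)$, whereas you prove the property from scratch by reducing to elementary tensors $a\otimes s$, diagonalizing $a$, and showing that a diagonal matrix with entries in $\cl S^+$ lies in $D_n^{k-max}(\cl S)$ via the corner embedding $p\mapsto p\oplus 0_{k-1}$. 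Your argument is sound and has the virtue of being self-contained (in effect you re-derive the cited fact, since $\mathrm{diag}(p_1,\dots,p_n)$ with $p_i\in\cl S^+$ already lies in $D_n^{max}(\cl S)\subseteq D_n^{k-max}(\cl S)$, which would have let you shortcut the corner-embedding step); the paper's version is shorter but leans on the earlier literature. Both are valid, and the choice is purely one of how much of the $k=1$ theory one wishes to take as given.
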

\begin{proof} Need to check the three conditions of being a matrix ordering on $\cl S$:
\begin{itemize}
\item[(1)] $D_n^{k-max}(\cl S)$ is a cone in $M_n(\cl S)_{sa}$ for each $n\in \bb N$, and in particular, $D_1^{k-max}(\cl S)= \cl S^+$: \\ For each $n\in \bb N,\quad D_n^{k-max}(\cl S)$ is a non-empty subset of $M_n(\cl S)_{sa}$ as one can easily verify that $D_n^{max}(\cl S)\subseteq D_n^{k-max}(\cl S)$. Moreover, by definition we have that $D_n^{k-max}(\cl S)\subseteq M_n(\cl S)^+$ with the following two properties:
\begin{itemize}
\item[(i)] $D_n^{k-max}(\cl S)$ is closed under positive scalar multiplication:\\
Let $\lambda \in \bb R^+$ and $ADA^*\in D_n^{k-max}(\cl S)$, then
\begin{eqnarray*} \lambda(ADA^*)& =&(\sqrt{\lambda}A)D(\sqrt{\lambda}A)^*\\
&=& A(\lambda)A^* \in D_n^{k-max}(\cl S).\end{eqnarray*}
\item[(ii)] $D_n^{k-max}(\cl S)$ is closed under addition:\\
Let $ADA^*, B\tilde D B^* \in D_n^{k-max}(\cl S)$, where $A\in M_{n,mk},\, B\in M_{n,pk},$\\ $D=\text{diag}(D_1,\dots,D_m),\, \tilde D=\text{diag}(\tilde D_1,\dots,\tilde D_p),$  then we have:\\
$ADA^* +B{\tilde D}B^* =\left[\begin{matrix}A & B\end{matrix}\right]\left[\begin{matrix}D& 0\\ 0 & \tilde D\end{matrix}\right]\left[\begin{matrix}A^*\\ B^*\end{matrix}\right]\in D_n^{k-max}(\cl S). $
\end{itemize}
For $n=1$, we have $\cl S^+=D_1^{max}(\cl S)\subseteq D_1^{k-max}(\cl S)\subseteq \cl S^+$, i.e. $D_1^{k-max}(\cl S)=\cl S^+$.
\item[(2)] $D_n^{k-max}(\cl S) \cap -D_n^{k-max}(\cl S) =\{0\}$ for all $n\in \bb N$:\\
 Note that $D_n^{k-max}(\cl S) \cap -D_n^{k-max}(\cl S)\subseteq M_n(\cl S)^+ \cap -M_n(\cl S)^+  =\{0\}$.
\item[(3)] $X\,D_n^{k-max}(\cl S)\,X^* \subseteq D_m^{k-max}(\cl S)$ for all $X\in M_{m,n}$ and for all $m,n\in \bb N$:\\
 Let $ADA^* \in D_n^{k-max}(\cl S),\, X\in M_{m,n}$ for any $m,n \in \bb N$,\\ then $X(ADA^*)X^*=(XA)D(XA)^* \in D_m^{k-max}(\cl S)$,\\ i.e. $XD_n^{k-max}(\cl S)X^* \subseteq D_m^{k-max}(\cl S)$ for all $m,n$.
\end{itemize}
Hence, $(1),\, (2) \text{ and } (3)$ show that $\cl D^{k-max}(\cl S)$ is a matrix ordering on $\cl S$. It remains to show that $e$ is a matrix order unit for this ordering. It is clear that $e$ is an (Archimedean) order unit for $D_1^{k-max}(\cl S)$ since $D_1^{k-max}(\cl S)=\cl S^+$. Since $\cl D^{max}(\cl S)$ is the strongest matrix ordering on $\cl S$, then we have $D_n^{max}(\cl S) \subseteq D_n^{k-max}(\cl S)$ for all $n\in \bb N$. We know $e_n$ is an order unit for $(M_n(\cl S), D_n^{max}(\cl S))$. It follows that $e_n$ is an order unit for $(M_n(\cl S),D_n^{k-max}(\cl S))$, i.e. $e$ is a matrix order unit for $\cl D^{k-max}(\cl S)$. As a result, $\cl D^{k-max}(\cl S)$ is a matrix ordering on $\cl S$. In particular, for $k=1$ we have that $\cl D^{1-max}(\cl S)=\cl D^{max}(\cl S)$ is the strongest matrix ordering on $\cl S$.
\end{proof}

\begin{remark}\label{arch} Given an operator system $\cl S$ with unit $e$, we have that $(\cl S,\,{\cl D}^{k-max}(\cl S), e)$ is a matrix ordered $*$-vector space for some fixed $k\in \bb N$. \\\\
$(1)$ If the operator system $\cl S$ is finite-dimensional, then one can verify that this matrix ordered $*$-vector space $(\cl S,\, {\cl D}^{k-max}(\cl S), e)$ is an operator system. \\\\
$(2)$ If $\cl S$ is an infinite-dimensional space, then ${\cl D}^{k-max}(\cl S)$ is just a matrix ordering, not an operator system structure. As an example, let $\cl S=C([0,1])$ be the vector space of complex-valued functions on the unit interval, with $\cl S^+$ the usual cone of positive functions and $e$ the constant function $1$.\\ Let $P(t)=\left[\begin{matrix}1& e^{2\pi it}\\ e^{-2\pi it}&1\end{matrix}\right]$ be a self-adjoint element in $M_2(C([0,1]))$.\\ Then $re_2 +P(t) =\left[\begin{matrix}1+r & e^{2\pi it}\\ e^{-2\pi it}&1+r \end{matrix}\right] \in D_2^{max}(C([0,1]))$ for every $r>0$, \\ but $P(t)\notin D_2^{max}(C([0,1]))$ as was shown in~\cite{PTT}.\\ This shows that $e=1$ can not be an Archimedean matrix order unit. \\ As a result, $(C([0,1]), \cl D^{max}(C([0,1])), 1)$ can not be an operator system. Furthermore, the inclusion $\cl D^{max}(C([0,1]))\subseteq \cl D^{k-max}(C([0,1])), \, k\in\bb N,$ implies that $\cl D^{k-max}(C([0,1]))$ is just a matrix ordering, too.\\\\
$(3)$ To transform the matrix ordered space $(C([0,1]), \cl D^{max}(C([0,1])), 1)$ and consequently $(C([0,1]), \cl D^{k-max}(C([0,1])), 1)$ into operator systems, we will use the {\bf Archimedeanization process} for matrix ordered spaces. This theory was developed in detail for ordered $*$-vector spaces in ~\cite{PT}, and generalized to matrix ordered spaces with a matrix order unit $e$ in~\cite{PTT}. 
\end{remark}
 
\noindent The Archimedeanized matrix ordered $*$-vector space $(\cl S, \cl D^{max}(\cl S),e)$  with underlying operator system $\cl S$, matrix ordering $\cl C^{max}(\cl S) =\{C_n^{max}(\cl S)\}_{n=1}^\infty$, given by $C_1^{max}(\cl S)=D^{max}_1=\cl S^+$ and
$$C^{max}_n(\cl S)=\bigg\{A\in M_n(\cl S): re_n +A \in D_n^{max}(\cl S)\text{ for all } r>0 \, \bigg\},$$ is the {\bf maximal operator system} OMAX$(\cl S)=(\cl S, \cl C^{max}(\cl S), e)$ in~\cite{PTT}.

\begin{defn} Let $\cl S$ be an operator system with unit $e$. We set $$C_n^{k-max}(\cl S)=\bigg\{ A\in M_n(\cl S):\, re_n +A \in D_n^{k-max}(\cl S)\,\text{ for all }r>0\, \bigg\},$$  $\cl C^{k-max}(\cl S)=\{C_n^{k-max}(\cl S)\}_{n=1}^\infty$ and define $\text{\emph{OMAX}}_k(\cl S)=(\cl S, \cl C^{k-max}(\cl S), e)$.
\end{defn}
\noindent By the definition and the results above, we have that the enlarged matrix ordering $\cl C^{k-max}(\cl S)$ is a new operator system structure on $\cl S$, which we shall call the {\bf super k-maximal operator system structure} on $\cl S$ and OMAX$_k(\cl S)$ the {\bf super k-maximal operator system} on $\cl S$.\\\\
\noindent{\bf Properties of super k-maximal operator system structures on $\cl S$:}\\\\
\noindent$(1)$ When $k=1$, $C_n^{1-max}(\cl S)=C_n^{max}(\cl S)$ for all $n\in \bb N$.\\\\
\noindent$(2)$ For a fixed $k\in\bb N$, $C_n^{max}(\cl S)\subseteq C_n^{k-max}(\cl S)$ for all $n\in \bb N$.\\\\
\noindent$(3)$ $C_n^{k-max}(\cl S)\subseteq C_n^{h-max}(\cl S)$ for some fixed $k,h\in\bb N$ with $k\le h$:
\begin{itemize}
\item[(i)] Let $(a_{ij})\in C_n^{k-max}(\cl S)$. If $k=h$, then $C_n^{k-max}(\cl S)=C_n^{h-max}(\cl S)$. \\ Suppose $k<h$. If $(a_{ij})\in D_n^{k-max}(\cl S)$, then $(a_{ij}) = ADA^*$ for some $A\in M_{n,mk}$ and $D=\text{diag}(D_1, D_2, \dots,D_m)$, where each $D_i \in M_k(\cl S)^+$ for all $1\le i \le m,\, m\in \bb N$. Write $A =\left[\begin{matrix}A_1 & A_2 & \cdots & A_m\end{matrix}\right]$, where each $A_i \in M_{n,k}, \, 1\le i \le m$. Transform the matrix $A$ into:
$$ \tilde A=\left[\begin{matrix} A_1\, 0\, & A_2\, 0\, & \cdots  &\, A_m \, 0\end{matrix}\right] \in M_{n,hk},$$ by adding $(h-k)$ columns of $0$ after each block $A_i$. Using the same trick,\\ transform the block diagonal matrix $D$ into a bigger block diagonal matrix\\ $\tilde D = \text{diag}(\tilde D_1, \tilde D_2, \dots, \tilde D_m)$, where each diagonal block $\tilde D_i$ is maximized by\\ adding a $(h-k)\times (h-k)$ diagonal block of $0$, i.e.
$$\tilde D_i =\left[\begin{matrix}D_i & 0 \\ 0 & 0\end{matrix}\right]\in C_h^{min}(\cl S).$$ Then $ (a_{ij}) = ADA^*=\tilde A \tilde D \tilde A^* \in D_n^{h-max}(\cl S)$ and $D_n^{k-max}(\cl S) \subseteq D_n^{h-max}(\cl S)$.
\item[(ii)] Let $(a_{ij}) \in C_n^{k-max}(\cl S)$. Then $ re_n +(a_{ij}) \in D_n^{k-max}(\cl S)$ for all $r>0$. Then by case (i), $re_n +(a_{ij}) \in D_n^{h-max}(\cl S)$ too. Therefore,  $(a_{ij}) \in C_n^{h-max}(\cl S)$ and  $C_n^{k-max}(\cl S)\subseteq C_n^{h-max}(\cl S)$. 
\end{itemize}
\noindent$(4)$ The identity map $\imath : \text{OMAX}_k(\cl S)\to \text{OMAX}_h(\cl S)$ is completely positive, whenever  $k\le h$.

\begin{lemma}\label{rfactor} Let $(W, W^+, e)$ be an AOU space, and let $\{P_n\}_{n=1}^\infty$ be an operator system structure on $W$ with $P_1 = W^+$. If $p\in W^+, \, (w_{ij})\in M_n(W)$ are such that $r(p\otimes I_n) + (w_{ij}) \in P_n$ for all $r>0$, then $(w_{ij})\in P_n$.
\end{lemma}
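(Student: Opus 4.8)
The plan is to reduce the hypothesis, which is stated in terms of the positive element $p$, to the corresponding statement in terms of the distinguished matrix order unit $e$, and then to invoke the Archimedean property of $e_n$ directly. First I would use that $e$ is an order unit for the AOU space $(W,W^+,e)$: since $p\in W^+\subseteq W_{sa}$, there is a scalar $t>0$ with $te-p\in W^+=P_1$.

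The key step is to lift this first-level inequality to level $n$, so as to dominate $p\otimes I_n$ by a scalar multiple of $e_n$. I would first record the general fact that any $a\in P_1$ satisfies $a\otimes I_n\in P_n$: writing $X_i\in M_{1,n}$ for the row with a single $1$ in the $i$-th slot, the compatibility condition of the matrix ordering gives $X_i^* a X_i = aE_{ii}\in P_n$, and summing over $i$ (using that $P_n$ is a cone closed under addition) yields $a\otimes I_n=\sum_{i=1}^n aE_{ii}\in P_n$. Applying this to $a=te-p$ produces $te_n-p\otimes I_n\in P_n$, and dividing by $t$ gives $e_n-\tfrac1t(p\otimes I_n)\in P_n$.

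With this domination in hand, fix an arbitrary $r>0$ and write
$$ re_n+(w_{ij}) = r\Big(e_n-\tfrac1t(p\otimes I_n)\Big)+\Big(\tfrac{r}{t}(p\otimes I_n)+(w_{ij})\Big). $$
The first summand lies in $P_n$ because $P_n$ is a cone, and the second lies in $P_n$ by the hypothesis applied with the positive scalar $r/t$. Hence $re_n+(w_{ij})\in P_n$ for every $r>0$. Since $(w_{ij})$ is self-adjoint—being the difference of the self-adjoint element $r(p\otimes I_n)+(w_{ij})$ and the self-adjoint $r(p\otimes I_n)$—and since $e_n$ is an Archimedean order unit for $(M_n(W),P_n)$ (as $\{P_n\}_{n=1}^\infty$ is an operator system structure), I conclude $(w_{ij})\in P_n$.

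The main obstacle is the lifting step of the second paragraph: the Archimedean axiom is postulated only for the distinguished order unit $e$, never for a general positive $p$, so the entire argument hinges on passing the first-level bound $p\le te$ through the matrix ordering via compatibility in order to obtain $p\otimes I_n\le te_n$ at level $n$. Once $p\otimes I_n$ is controlled by $e_n$, the remainder is a routine convex-combination manipulation followed by a single application of the Archimedean property.
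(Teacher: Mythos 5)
Your proof is correct and follows essentially the same route as the paper's: dominate $p$ by a scalar multiple of $e$ (the paper uses the order seminorm to take $t=\|p\|$, you take any order-unit bound $t$), amplify to get $e_n - \tfrac1t(p\otimes I_n)\in P_n$, split $re_n+(w_{ij})$ into that piece plus an instance of the hypothesis, and invoke the Archimedean property of $e_n$. The only difference is cosmetic — you spell out the amplification $a\in P_1\Rightarrow a\otimes I_n\in P_n$ via compatibility, which the paper leaves implicit.
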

\begin{proof} Let $\|\cdot\|$ be an order seminorm for this structure. If $p=0$, then $(w_{ij})\in P_n$ is obvious. Let $0\ne p\in W^+$, and replace $p$ by $\frac{p}{\|p\|} \in W^+$. This implies $r(\frac{p}{\|p\|}\otimes I_n)+ (w_{ij})\in P_n$, too, for all $r>0$. Since $\|\frac{p}{\|p\|}\|=1$ and $e-\frac{p}{\|p\|} \in W^+$, then we have $$re_n +(w_{ij}) = r((e-\frac{p}{\|p\|})\otimes I_n) + r(\frac{p}{\|p\|} \otimes I_n)+(w_{ij}) \in P_n,$$ for all $r>0$. Therefore, $(w_{ij})\in P_n$.
\end{proof}

\begin{thm}\label{omaxk} Let $\cl S$ be an operator system with unit $e$ and $(W,\{P_n\}_{n=1}^\infty, e')$ be an (abstract) operator system. If $\phi: \text{\emph{OMAX}}_k(\cl S) \to W$ is a k-positive map for some fixed $k\in \bb N$, then $\phi: \text{\emph{OMAX}}_k(\cl S) \to W$ is completely positive. \\ Moreover, if $\tilde{\cl S} = (\cl S,\{C_n\}_{n=1}^\infty,e)$ is another operator system on $\cl S$ with $C_k=M_k(\cl S)^+$ such that for every operator system $W$, any k-positive map $\psi:\tilde{\cl S}\to W$ is completely positive, then the identity map on $\cl S$ is a complete order isomorphism from $\tilde{\cl S}$ onto $\text{\emph{OMAX}}_k(\cl S)$.
\end{thm}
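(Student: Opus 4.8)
The theorem has the same structure as the dual OMIN$_k$ result proved earlier, so I would mirror that argument while replacing the function-system representation with the defining factorization of $D_n^{k-max}(\cl S)$. The plan is to prove part $(i)$ directly from the concrete shape of the super $k$-maximal cones, and then derive part $(ii)$ as a soft consequence of $(i)$ together with the universal property assumed of $\tilde{\cl S}$.

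For part $(i)$, suppose $\phi:\text{OMAX}_k(\cl S)\to W$ is $k$-positive and let $(a_{ij})\in C_n^{k-max}(\cl S)$ with $n>k$. By definition $re_n+(a_{ij})\in D_n^{k-max}(\cl S)$ for every $r>0$, so it suffices to show that $\phi^{(n)}$ carries $D_n^{k-max}(\cl S)$ into $P_n$ and then pass to the Archimedean closure. An element of $D_n^{k-max}(\cl S)$ has the form $ADA^*$ with $A\in M_{n,mk}$ and $D=\text{diag}(D_1,\dots,D_m)$, each $D_l\in M_k(\cl S)^+$. Since $\phi$ is $k$-positive, $\phi^{(k)}(D_l)\in P_k\subseteq M_k(W)^+$ for each $l$, hence $\phi^{(mk)}(D)=\text{diag}(\phi^{(k)}(D_1),\dots,\phi^{(k)}(D_m))\in P_{mk}$, using that an operator-system matrix ordering is closed under direct sums. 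Compatibility of the cones $\{P_n\}$ under compression by $A$ (the condition $X^*P_{mk}X\subseteq P_n$) then gives $\phi^{(n)}(ADA^*)=A\,\phi^{(mk)}(D)\,A^*\in P_n$. Thus $r\phi^{(n)}(e_n)+\phi^{(n)}((a_{ij}))=\phi^{(n)}(re_n+(a_{ij}))\in P_n$ for all $r>0$. Writing $p=\phi(e)\in W^+=P_1$ and applying Lemma \ref{rfactor} with the operator system structure $\{P_n\}$ (so that $r(p\otimes I_n)+\phi^{(n)}((a_{ij}))\in P_n$ for all $r>0$ forces $\phi^{(n)}((a_{ij}))\in P_n$), we conclude $\phi^{(n)}((a_{ij}))\in P_n$. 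Hence $\phi$ is $n$-positive for all $n$, i.e. completely positive.

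For part $(ii)$, let $\tilde{\cl S}=(\cl S,\{C_n\},e)$ be an operator system with $C_k=M_k(\cl S)^+$ enjoying the stated universal property. As in the OMIN$_k$ theorem, $C_k=M_k(\cl S)^+$ forces $C_i=M_i(\cl S)^+$ for all $1\le i\le k$, and combined with the general inclusion $M_n(\cl S)^+\subseteq C_n^{k-max}(\cl S)$ (Property $(2)$ for OMAX$_k$) one gets the key identity $M_k(\cl S)^+=M_k(\tilde{\cl S})^+=C_k^{k-max}(\cl S)$. Consider the identity map $\imath:\text{OMAX}_k(\cl S)\to\tilde{\cl S}$. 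By this identity at level $k$, both $\imath$ and $\imath^{-1}$ are $k$-positive. Applying part $(i)$ to $\imath$ shows $\imath$ is completely positive, while the assumed universal property applied to the $k$-positive map $\imath^{-1}:\tilde{\cl S}\to\text{OMAX}_k(\cl S)$ shows $\imath^{-1}$ is completely positive. Since $\imath$ is unital, it is a complete order isomorphism, which is the claim.

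The main obstacle is entirely in part $(i)$: verifying that $\phi^{(n)}$ sends the factored elements $ADA^*$ into $P_n$ using only $k$-positivity, and then correctly invoking Lemma \ref{rfactor} to clear the Archimedean $re_n$ term. The delicate point is that $k$-positivity of $\phi$ only controls $\phi^{(k)}$ on $M_k(\cl S)^+$, so the argument must push exactly through the block-diagonal structure of $D$ and the compression compatibility of $\{P_n\}$; no higher positivity of $\phi$ may be assumed a priori, which is precisely why the concrete description of $D_n^{k-max}(\cl S)$ as compressions of block-diagonal $M_k(\cl S)^+$ elements is indispensable.
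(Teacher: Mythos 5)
Your proposal is correct and follows essentially the same route as the paper: part $(i)$ pushes $k$-positivity through the block-diagonal factorization $ADA^*$ of $D_n^{k-max}(\cl S)$ and then clears the Archimedean term via Lemma~\ref{rfactor}, and part $(ii)$ plays part $(i)$ against the assumed universal property of $\tilde{\cl S}$ on the identity map exactly as the paper does (with only the labels $\imath$ and $\imath^{-1}$ reversed). No gaps.
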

\begin{proof} $(i)$ Assume $\phi: \text{OMAX}_k(\cl S) \to W$ is a k-positive map which is equivalent to $\phi$ being k-positive on the operator system $\cl S$ as a subspace of $B(\cl H)$ for some fixed Hilbert space $\cl H$, since $C_i^{k-max}(\cl S)=M_i(\cl S)^+$, $1\le i\le k$. Let $(a_{ij}) \in M_n(\text{OMAX}_k(\cl S))^+=C_n^{k-max}(\cl S)$:
\begin{itemize}
\item[(1)] If $(a_{ij})\in D_n^{k-max}(\cl S)$, then $(a_{ij})= ADA^*$ for some $A\in M_{n,mk}$, and some \\  $D=\text{diag}(D_1, D_2, \dots,D_m)$ where $D_i \in M_k(\cl S)^+$ for all $1\le i \le m,$ $ m\in \bb N$.\\ Then, we have \begin{eqnarray*}
\phi^{(n)}\big((a_{ij})\big) &=&\phi^{(n)}\big(ADA^*\big)= A\phi^{(mk)}(D)A^*\\ &=& A\text{ diag}\big(\phi^{(k)}(D_1),\,\phi^{(k)}(D_2),\dots, \phi^{(k)}(D_m)\big)A^* \in M_n(W)^+,\end{eqnarray*} since each $\phi^{(k)}(D_i)\in M_k(W)^+$ because $\phi:\cl S\to W$ k-positive.\\
\item[(2)] If  $(a_{ij})\in C_n^{k-max}(\cl S)$, then $re_n + (a_{ij})\in D_n^{k-max}(\cl S)$ for all $r>0$. It follows that $$\phi^{(n)}(re_n +(a_{ij}))= r(I_n\otimes \phi(e))+\phi^{(n)}\big((a_{ij})\big) \in M_n(W)^+\, \text{ for all } r>0.$$ Therefore by Lemma~\ref{rfactor}, we have $\phi^{(n)}\big((a_{ij})\big) \in M_n(W)^+$. \end{itemize} As a result, we conclude that $\phi:\text{OMAX}_k(\cl S)\to W$ is completely positive.\\\\
$(ii)$ Now, let $\tilde{\cl S} = (\cl S,\{C_n\}_{n=1}^\infty,e)$ be another operator system on $\cl S$ with $C_k=M_k(\cl S)^+$ such that for every operator system $W$, any k-positive map $\psi:\tilde{\cl S}\to W$ is completely positive. One can easily verify that $C_i=M_i(\cl S)$ for all $i=1,2,\dots,k$. Moreover, we know that $C_i^{k-max}(\cl S)=M_i(\cl S)^+$ for all $i=1,2,\dots,k$. This shows that both the identity map $\imath:\tilde{\cl S}\to \text{OMAX}_k(\cl S)$ and its inverse $\imath^{-1}:\text{OMAX}_k(\cl S)\to \tilde{\cl S}$ are k-positive maps. Then, by the assumption, $\imath$ is completely positive, and by part $(i)$, $\imath^{-1}$ is completely positive. Since $\imath$ is also unital, we have that $\tilde{\cl S}$ and OMAX$_k(\cl S)$ are completely order isomorphic.
\end{proof}

\begin{cor}\label{romaxk} Let $\cl S$ be an operator system with unit $e$, and $(W,\{P_n\}_{n=1}^\infty, e')$ be an (abstract) operator system. Then $\phi: \cl S \to W$ is k-positive if and only if $\phi: \text{\emph{OMAX}}_k(\cl S) \to W$ is completely positive.
\end{cor}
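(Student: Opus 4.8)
The plan is to reduce the statement entirely to Theorem~\ref{omaxk}, whose part $(i)$ already does the heavy lifting. The one fact I need in order to bridge the two formulations of ``k-positive'' is the identity $C_k^{k-max}(\cl S)=M_k(\cl S)^+$; that is, the $k$-th matrix cone of $\text{OMAX}_k(\cl S)$ coincides with the original $k$-th cone of $\cl S$. Since k-positivity of a map is a condition involving only the single matrix level $k$, once I know the two operator systems agree at level $k$, the notions of k-positivity for $\phi:\cl S\to W$ and for $\phi:\text{OMAX}_k(\cl S)\to W$ are literally the same statement.

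First I would record this identity. From the definition, every element $ADA^*\in D_n^{k-max}(\cl S)$ lies in $M_n(\cl S)^+$ (as already noted in the proof of Proposition~\ref{kmaxo}), so $D_n^{k-max}(\cl S)\subseteq M_n(\cl S)^+$ for every $n$. Conversely, for $n\le k$ and $P\in M_n(\cl S)^+$, padding $P$ with zeros gives $P\oplus 0\in M_k(\cl S)^+$, and compressing by $A=[I_n\ 0]\in M_{n,k}$ recovers $P=A(P\oplus 0)A^*$, exhibiting $P\in D_n^{k-max}(\cl S)$ (taking $m=1$). Hence $D_n^{k-max}(\cl S)=M_n(\cl S)^+$ for $1\le n\le k$. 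Since $e_n$ is an Archimedean order unit for $M_n(\cl S)^+$, the Archimedeanization step leaves these cones unchanged, so $C_n^{k-max}(\cl S)=M_n(\cl S)^+$ for $1\le n\le k$; in particular $C_k^{k-max}(\cl S)=M_k(\cl S)^+$.

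With this identity in hand, both implications are immediate. For the forward direction, assume $\phi:\cl S\to W$ is k-positive, i.e. $\phi^{(k)}(M_k(\cl S)^+)\subseteq P_k$. Because $C_k^{k-max}(\cl S)=M_k(\cl S)^+$, this is exactly the statement that $\phi:\text{OMAX}_k(\cl S)\to W$ is k-positive, and Theorem~\ref{omaxk}$(i)$ upgrades it to complete positivity. For the converse, if $\phi:\text{OMAX}_k(\cl S)\to W$ is completely positive, then it is in particular k-positive, so $\phi^{(k)}(C_k^{k-max}(\cl S))\subseteq P_k$; using the same identity, $\phi^{(k)}(M_k(\cl S)^+)\subseteq P_k$, which is precisely k-positivity of $\phi:\cl S\to W$.

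There is essentially no obstacle here beyond correctly observing that k-positivity is a condition on a single matrix level and that $\text{OMAX}_k(\cl S)$ agrees with $\cl S$ at that level; the substantive work of promoting k-positivity on $\text{OMAX}_k(\cl S)$ to complete positivity was already carried out in Theorem~\ref{omaxk}. The only point requiring a little care is the Archimedeanization, where I must check that enlarging $D_n^{k-max}(\cl S)$ to $C_n^{k-max}(\cl S)$ does not alter the cone at levels $n\le k$; this holds because $M_n(\cl S)^+$ is already Archimedean closed with respect to $e_n$.
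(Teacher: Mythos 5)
Your proposal is correct and follows essentially the same route as the paper: the paper leaves the corollary without a separate proof precisely because, as it remarks inside the proof of Theorem~\ref{omaxk}$(i)$, $C_i^{k-max}(\cl S)=M_i(\cl S)^+$ for $1\le i\le k$, so k-positivity on $\cl S$ and on $\text{OMAX}_k(\cl S)$ coincide and Theorem~\ref{omaxk} does the rest. Your explicit verification of that cone identity (padding, compression, and the Archimedeanization check) is a correct filling-in of a step the paper only asserts.
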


The following result gives an alternative way to describe the super k-maximal operator system structure $\cl C^{k-max}(\cl S)$:
\begin{prop} Let $\cl S$ be an operator system with unit $e$ and fix $k\in \bb N$. Then $(a_{ij})\in C^{k-max}_n(\cl S)$ if and only if $(\phi(a_{ij})) \in M_n(B(\cl H))^+$ for all unital k-positive maps $\phi:\cl S\to B(\cl H)$ and for all Hilbert spaces $\cl H$.
\begin{proof} Suppose $\phi:\cl S\to B(\cl H)$ is a k-positive map, where $\cl H$ is an arbitrary Hilbert space. Then $\phi:\text{OMAX}_k(V)\to B(\cl H)$ is completely positive by Theorem~\ref{omaxk}. For each $n\in \bb N$ set
\begin{eqnarray*}P_n^k(\cl S)&=& \big\{(a_{ij})\in M_n(\cl S): (\phi(a_{ij})) \in M_n(B(\cl H))^+ \text{ for all }\\ & & \phi:\cl S\to B(\cl H) \text{ unital k-positive }, \cl H \text{ Hilbert space }\big\}.\end{eqnarray*} It is clear that $C_n^{k-max}(\cl S)\subseteq P_n^k(\cl S)$ for all $n$. On the other hand, using Theorem ~\ref{c-e}, given the abstract operator system OMAX$_k(\cl S)$, there exists a Hilbert space $\cl H_0$, a concrete operator system $\cl S_0\subseteq B(\cl H_0)$ and a complete order isomorphism $\phi_0:\text{OMAX}_k(\cl S)\to \cl S_0\subseteq B(\cl H_0)$ with $\phi_0(e)=I_{\cl H_0}$. Then by Corollary~\ref{romaxk}, we have $\phi_0:\cl S\to B(\cl H_0)$ is unital k-positive. Let $(a_{ij})\in P_n^k(\cl S)$. Then $(\phi_0(a_{ij}))\in M_n(\cl S_0)^+\subseteq M_n(B(\cl H_0))^+$. It follows that $(a_{ij}) \in \phi_0^{-1}(M_n(\cl S_0)^+) \subseteq C_n^{k-max}(\cl S)$ since $\phi_0$ is a complete order isomorphism. Hence, $P_n^k(\cl S)\subseteq C_n^{k-max}(\cl S)$ for all $n$. As a result,
\begin{eqnarray*}C_n^{k-max}(\cl S)&=& \big\{(a_{ij})\in M_n(\cl S): (\phi(a_{ij})) \in M_n(B(\cl H))^+ \text{ for all }\\ & & \phi:\cl S\to B(\cl H) \text{ unital k-positive, } \cl H \text{ Hilbert space}\big\}.\end{eqnarray*}
\end{proof}
\end{prop}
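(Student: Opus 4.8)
The plan is to prove equality between $C_n^{k-max}(\cl S)$ and the candidate set
$$P_n^k(\cl S)=\big\{(a_{ij})\in M_n(\cl S): (\phi(a_{ij}))\in M_n(B(\cl H))^+ \text{ for every unital $k$-positive } \phi:\cl S\to B(\cl H),\ \cl H \text{ a Hilbert space}\big\}.$$
The inclusion $C_n^{k-max}(\cl S)\subseteq P_n^k(\cl S)$ I expect to be routine: given any unital $k$-positive map $\phi:\cl S\to B(\cl H)$, Corollary~\ref{romaxk} says that $\phi$ is completely positive as a map out of $\text{OMAX}_k(\cl S)$, so $\phi^{(n)}$ carries $C_n^{k-max}(\cl S)=M_n(\text{OMAX}_k(\cl S))^+$ into $M_n(B(\cl H))^+$; hence every element of the left-hand cone satisfies the defining condition on the right.

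The substance lies in the reverse inclusion $P_n^k(\cl S)\subseteq C_n^{k-max}(\cl S)$, and the \emph{hard part} is that membership on the right is tested by an entire family of maps, whereas to land back in $C_n^{k-max}(\cl S)$ I only need a single sufficiently faithful one. To produce such a universal detecting map I would exploit that $\text{OMAX}_k(\cl S)$ is a genuine (abstract) operator system (Proposition~\ref{kmaxo} together with the Archimedeanization discussed above). The Choi--Effros theorem (Theorem~\ref{c-e}) then supplies a Hilbert space $\cl H_0$, a concrete operator system $\cl S_0\subseteq B(\cl H_0)$, and a complete order isomorphism $\phi_0:\text{OMAX}_k(\cl S)\to \cl S_0$ with $\phi_0(e)=I_{\cl H_0}$.

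It remains to check that $\phi_0$ is an admissible test map and to run it backwards. Being completely positive out of $\text{OMAX}_k(\cl S)$, the map $\phi_0$ is $k$-positive as a map $\cl S\to B(\cl H_0)$ by Corollary~\ref{romaxk}, and it is unital by construction, so it occurs among the maps quantified in $P_n^k(\cl S)$. Thus for $(a_{ij})\in P_n^k(\cl S)$ we obtain $(\phi_0(a_{ij}))\in M_n(B(\cl H_0))^+$; since this element also lies in $M_n(\cl S_0)$, it belongs to $M_n(\cl S_0)^+$. Applying the complete order isomorphism $\phi_0^{-1}$ then returns $(a_{ij})\in C_n^{k-max}(\cl S)$, finishing the inclusion. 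The one delicate ingredient is the existence and faithfulness of $\phi_0$, which is precisely what the Choi--Effros representation of the abstract operator system $\text{OMAX}_k(\cl S)$ guarantees.
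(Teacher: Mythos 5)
Your proposal is correct and follows essentially the same route as the paper's own proof: both inclusions are handled identically, with Corollary~\ref{romaxk} giving $C_n^{k-max}(\cl S)\subseteq P_n^k(\cl S)$ and the Choi--Effros representation $\phi_0$ of $\text{OMAX}_k(\cl S)$ serving as the single faithful test map for the reverse inclusion. The one point you make explicit --- that $(\phi_0(a_{ij}))$ lies in $M_n(\cl S_0)\cap M_n(B(\cl H_0))^+=M_n(\cl S_0)^+$ before applying $\phi_0^{-1}$ --- is exactly the step the paper uses as well.
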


\begin{prop} Let $\cl S$ be an operator system with unit $e$ and fix $k\in \bb N$. Then for $a\in \cl S$, we have that $$ \|a\|_{\text{\emph{OMAX}}_k(\cl S)}=\sup\{\|\varphi(a)\||\, \varphi: \cl S\to B(\cl H) \text{ unital k-positive}\},$$ where the supremum is taken over all Hilbert spaces and over all unital k-positive maps $\varphi$.
\end{prop}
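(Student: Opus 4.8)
The plan is to interpret both sides of the claimed identity through the universal property of OMAX$_k(\cl S)$ recorded in Corollary~\ref{romaxk}, and then to read off the norm from a concrete representation furnished by the Choi--Effros theorem. Write $N(a)$ for the supremum on the right-hand side. The first observation I would make is that Corollary~\ref{romaxk} allows me to replace the phrase ``$\varphi:\cl S\to B(\cl H)$ unital k-positive'' by the phrase ``$\varphi:\text{OMAX}_k(\cl S)\to B(\cl H)$ unital and completely positive (UCP)'': these two descriptions pick out exactly the same collection of maps. Thus $N(a)=\sup\{\|\varphi(a)\|:\varphi:\text{OMAX}_k(\cl S)\to B(\cl H)\text{ is UCP}\}$, and the problem reduces to showing that the operator system norm of OMAX$_k(\cl S)$ is computed by testing against all unital completely positive maps into the bounded operators on a Hilbert space.

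For the inequality $N(a)\le\|a\|_{\text{OMAX}_k(\cl S)}$, I would use the standard fact that a unital completely positive map between operator systems is completely contractive; in particular it is contractive, so $\|\varphi(a)\|_{B(\cl H)}\le\|a\|_{\text{OMAX}_k(\cl S)}$ for every UCP map $\varphi$. Taking the supremum over all such $\varphi$ then yields $N(a)\le\|a\|_{\text{OMAX}_k(\cl S)}$. Concretely, for self-adjoint $a$ this is immediate because $C_1^{k-max}(\cl S)=\cl S^+$, so the ground-level norm is the order norm attached to $\cl S^+$, and positivity of $\varphi$ together with $\varphi(e)=I$ forces $\|\varphi(a)\|\le\|a\|$; the general element is handled by passing to the self-adjoint dilation $\left[\begin{matrix}0&a\\a^*&0\end{matrix}\right]\in M_2(\cl S)$ and invoking complete positivity of $\varphi$.

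For the reverse inequality I would exhibit a single UCP map that attains the norm. Applying Theorem~\ref{c-e} to the abstract operator system OMAX$_k(\cl S)$ produces a Hilbert space $\cl H_0$, a concrete operator system $\cl S_0\subseteq B(\cl H_0)$, and a unital complete order isomorphism $\phi_0:\text{OMAX}_k(\cl S)\to\cl S_0\subseteq B(\cl H_0)$ with $\phi_0(e)=I_{\cl H_0}$. Since a unital complete order isomorphism is a complete isometry, we have $\|\phi_0(a)\|_{B(\cl H_0)}=\|a\|_{\text{OMAX}_k(\cl S)}$. By Corollary~\ref{romaxk} the map $\phi_0:\cl S\to B(\cl H_0)$ is itself unital and k-positive, hence it is one of the competitors in the supremum defining $N(a)$. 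Therefore $\|a\|_{\text{OMAX}_k(\cl S)}=\|\phi_0(a)\|\le N(a)$, and combining the two inequalities gives the asserted equality.

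The step I expect to be the main obstacle is the upper bound, specifically the justification of $\|\varphi(a)\|\le\|a\|_{\text{OMAX}_k(\cl S)}$ for a general, not necessarily self-adjoint, element against an arbitrary UCP map $\varphi$, since this is precisely where the operator system norm — rather than merely the ground-level cone $\cl S^+$ — must be brought into play. The cleanest route is to invoke that UCP maps between operator systems are completely contractive (see~\cite{VIP}); the very same fact also underwrites the assertion that the Choi--Effros embedding $\phi_0$ is a complete isometry, so both directions of the argument ultimately rest on this single structural property of unital completely positive maps.
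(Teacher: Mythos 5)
Your argument is correct and follows essentially the same route as the paper: the upper bound comes from the fact that a unital $k$-positive map on $\cl S$ is, by the universal property of OMAX$_k(\cl S)$, unital completely positive and hence (completely) contractive on OMAX$_k(\cl S)$, and the lower bound comes from realizing OMAX$_k(\cl S)$ concretely via Choi--Effros and noting that the resulting unital complete order embedding is a competitor in the supremum that attains the norm. The extra remarks about the self-adjoint dilation are harmless but not needed beyond the standard fact that UCP maps are completely contractive, which is exactly what the paper invokes.
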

\begin{proof} Suppose that $\varphi:\cl S \to B(\cl H)$ is a unital k-positive map. By Theorem ~\ref{omaxk}, $\varphi:\text{OMAX}_k(\cl S)\to B(\cl H)$ is completely positive and hence it is completely contractive. It follows that $\varphi$ is contractive and hence $$\|\varphi(a)\|\le \|a\|_{\text{OMAX}_k(\cl S)}, \text{ for all } a\in \cl S.$$ On the other hand, if $\varphi:\text{OMAX}_k(\cl S)\to B(\cl H)$ is a unital complete isometry, then $\varphi$ is completely positive and $\|a\|_{\text{OMAX}_k(\cl S)}=\|\varphi(a)\| , \text{ for all } a\in \cl S.$ Therefore, we conclude that $$ \|a\|_{\text{OMAX}_k(\cl S)}=\sup_{\cl H,\varphi}\{\|\varphi(a)\||\, \varphi: \cl S\to B(\cl H) \text{ unital k-positive }\}.$$
\end{proof}

\begin{prop}Let $\cl S$ be an operator system and let $k\in \bb N$. Then the identity map $\text{\emph{id}}$ from $\cl S$ to \emph{OMAX}$_k(\cl S)$  is completely bounded with $\|\text{\emph{id}}\|_{cb}=K$ if and only if for every operator system $\cl T$, every unital k-positive map $\phi:\cl S\to \cl T$ is completely bounded and the supremum of the completely bounded norms of all such maps is $K$.
\end{prop}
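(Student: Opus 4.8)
The plan is to establish the single norm identity
\[
\|\mathrm{id}\|_{cb} \;=\; \sup\bigl\{\|\phi\|_{cb}\;:\;\cl T \text{ an operator system},\ \phi:\cl S\to\cl T \text{ unital } k\text{-positive}\bigr\},
\]
from which both implications of the ``if and only if,'' together with the assertion that every such $\phi$ is completely bounded, follow at once. Write $K=\|\mathrm{id}\|_{cb}$ for the completely bounded norm of $\mathrm{id}:\cl S\to\mathrm{OMAX}_k(\cl S)$ and let $K'$ denote the supremum on the right; the argument will show $K=K'$ as elements of $[0,\infty]$. The two ingredients I would lean on are Corollary~\ref{romaxk}, which converts $k$-positivity out of $\cl S$ into complete positivity out of $\mathrm{OMAX}_k(\cl S)$, and the standard fact (already invoked in the preceding norm computation) that a unital completely positive map between operator systems is completely contractive.

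For the inequality $K'\le K$, I would factor an arbitrary unital $k$-positive map $\phi:\cl S\to\cl T$ through the super $k$-maximal system. Since $\phi$ is unital and $k$-positive as a map out of $\cl S$, Corollary~\ref{romaxk} gives that $\phi:\mathrm{OMAX}_k(\cl S)\to\cl T$ is (unital) completely positive, hence completely contractive, so its completely bounded norm measured from $\mathrm{OMAX}_k(\cl S)$ is at most $1$. Writing $\phi$ as the composite
\[
\cl S \xrightarrow{\ \mathrm{id}\ } \mathrm{OMAX}_k(\cl S) \xrightarrow{\ \phi\ } \cl T ,
\]
submultiplicativity of the completely bounded norm gives $\|\phi\|_{cb}\le 1\cdot\|\mathrm{id}\|_{cb}=K$. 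In particular, whenever $K$ is finite every such $\phi$ is completely bounded, and taking the supremum yields $K'\le K$.

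For the reverse inequality $K'\ge K$, the point is simply that $\mathrm{id}$ is itself an admissible map in the family. Recalling that $C_i^{k-max}(\cl S)=M_i(\cl S)^+$ for $1\le i\le k$, and in particular $C_k^{k-max}(\cl S)=M_k(\cl S)^+$, the identity $\mathrm{id}:\cl S\to\mathrm{OMAX}_k(\cl S)$ is unital and $k$-positive; choosing $\cl T=\mathrm{OMAX}_k(\cl S)$ therefore places $\mathrm{id}$ among the maps over which the supremum is taken, so $K'\ge\|\mathrm{id}\|_{cb}=K$. Combining the two inequalities proves $K=K'$ and hence the proposition. I expect the only point needing genuine care to be the verification that $\mathrm{id}$ belongs to the relevant family, i.e.\ that it is $k$-positive into $\mathrm{OMAX}_k(\cl S)$, which hinges precisely on the level-$k$ equality $C_k^{k-max}(\cl S)=M_k(\cl S)^+$; the completely-contractive step, although standard, implicitly uses the Choi--Effros embedding (Theorem~\ref{c-e}) to realize $\cl T$ inside some $B(\cl H)$.
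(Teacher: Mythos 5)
Your argument is correct and is essentially the standard proof: the paper itself does not write one out but simply defers to \cite[Proposition 5.4]{PTT}, whose argument is exactly your two-sided estimate (factoring a unital $k$-positive $\phi$ through $\mathrm{OMAX}_k(\cl S)$ via Corollary~\ref{romaxk} plus complete contractivity of unital completely positive maps for one inequality, and observing that $\mathrm{id}$ itself is unital $k$-positive into $\mathrm{OMAX}_k(\cl S)$ because $C_k^{k-max}(\cl S)=M_k(\cl S)^+$ for the other).
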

\begin{proof}Refer to \cite[Proposition 5.4]{PTT}.
\end{proof}

\begin{prop}Let $\cl S$ be an operator system with unit $e$. Then the identity map
from \emph{OMAX}$_k(\cl S)$ to \emph{MAX}$(\cl S_{k-min})$ is completely bounded if and only if for every Hilbert space $\cl H$, every bounded map $\phi: \cl S_{k-min}\to B(\cl H)$ decomposes as $$\phi=(\phi_1 -\phi_2)+i(\phi_3 -\phi_4),$$
where each $\phi_j :\cl S \to B(\cl H)$ is k-positive.
\end{prop}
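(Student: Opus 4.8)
The plan is to combine two universal properties with Corollary~\ref{romaxk}. The first tool is the defining property of the maximal operator space functor: for any operator space $\cl Y$, a bounded map out of $\text{MAX}(\cl S_{k-min})$ into $\cl Y$ is automatically completely bounded, with $\|\phi\|_{cb}=\|\phi\|$ (see \cite{PTT, PISIER}). The second is the translation supplied by Corollary~\ref{romaxk}, namely that a map $\phi:\cl S\to W$ is $k$-positive if and only if $\phi:\text{OMAX}_k(\cl S)\to W$ is completely positive. Before starting I would record that on self-adjoint elements $\|\cdot\|_{k-min}$ coincides with the first-level order norm of $\text{OMAX}_k(\cl S)$ (both equal $\sup\{|s(v)|:s\text{ a state}\}$ by the computation in Theorem~\ref{krt}), so the two first-level norms are equivalent within a factor of $2$ and the identity map is always bounded at the first level; the content of the statement is complete boundedness.

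For the forward implication I would assume $\text{id}:\text{OMAX}_k(\cl S)\to\text{MAX}(\cl S_{k-min})$ is completely bounded and take an arbitrary bounded $\phi:\cl S_{k-min}\to B(\cl H)$. By the universal property of MAX, $\phi:\text{MAX}(\cl S_{k-min})\to B(\cl H)$ is completely bounded, so the composite $\phi\circ\text{id}:\text{OMAX}_k(\cl S)\to B(\cl H)$, which is simply $\phi$ on the underlying space, is completely bounded. I would then invoke the Wittstock--Paulsen--Haagerup decomposition of a completely bounded map from an operator system into $B(\cl H)$ \cite{VIP} to write $\phi=(\phi_1-\phi_2)+i(\phi_3-\phi_4)$ with each $\phi_j:\text{OMAX}_k(\cl S)\to B(\cl H)$ completely positive. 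Corollary~\ref{romaxk} then identifies each $\phi_j$ with a $k$-positive map $\phi_j:\cl S\to B(\cl H)$, which is exactly the desired decomposition.

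For the converse I would fix a completely isometric embedding $j:\text{MAX}(\cl S_{k-min})\hookrightarrow B(\cl K)$, which exists for any operator space. Since MAX preserves the first-level norm, $\Phi:=j:\cl S_{k-min}\to B(\cl K)$ is an isometry, in particular bounded, so the hypothesis gives $\Phi=(\Phi_1-\Phi_2)+i(\Phi_3-\Phi_4)$ with each $\Phi_j:\cl S\to B(\cl K)$ a $k$-positive map. By Corollary~\ref{romaxk} each $\Phi_j:\text{OMAX}_k(\cl S)\to B(\cl K)$ is completely positive, hence completely bounded with $\|\Phi_j\|_{cb}=\|\Phi_j(e)\|$; summing the four pieces shows $\Phi:\text{OMAX}_k(\cl S)\to B(\cl K)$ is completely bounded. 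Because $j$ is a complete isometry, $\|\text{id}^{(n)}(x)\|_{M_n(\text{MAX})}=\|\Phi^{(n)}(x)\|$ for all $n$, whence $\|\text{id}\|_{cb}=\|\Phi\|_{cb}<\infty$ and $\text{id}:\text{OMAX}_k(\cl S)\to\text{MAX}(\cl S_{k-min})$ is completely bounded.

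The hard part will be the forward implication, precisely the step that converts ``completely bounded into $B(\cl H)$'' into ``a linear combination of completely positive maps.'' This is where the operator-system decomposition theorem is essential, and the delicate point is to ensure the four completely positive summands are completely positive \emph{with respect to the $\text{OMAX}_k(\cl S)$ structure}, so that Corollary~\ref{romaxk} can be applied to recover genuine $k$-positivity on $\cl S$, rather than positivity relative to some other operator system structure carried by the same underlying space. The remaining bookkeeping, the equivalence of the first-level norms and tracking constants through the compositions, is routine and does not affect the qualitative conclusion about complete boundedness.
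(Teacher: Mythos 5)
Your proposal is correct and follows essentially the same route as the paper: one direction composes a bounded map through the MAX universal property and the completely bounded identity, then applies the Wittstock decomposition to the resulting completely bounded map on $\text{OMAX}_k(\cl S)$ and converts the completely positive summands to $k$-positive maps via Corollary~\ref{romaxk}; the other direction embeds $\text{MAX}(\cl S_{k-min})$ completely isometrically into some $B(\cl K)$, decomposes the (bounded) embedding by hypothesis, and sums four completely positive, hence completely bounded, maps on $\text{OMAX}_k(\cl S)$. The only difference is cosmetic (your labeling of which implication is ``forward'') together with the harmless preliminary remark on first-level norms, so there is nothing to add.
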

\begin{proof} Assume that the decompositions of all such bounded maps holds, and suppose that  MAX$(\cl S_{k-min})\subseteq B(\cl H)$ completely isometrically for some Hilbert space $\cl H$. Let $\phi:\cl S_{k-min}\to \text{MAX}(\cl S_{k-min})$ be the identity map and let $\phi_j,\, j=1,2,3,4$ be a k-positive map on $\cl S$ such that $\phi=(\phi_1 -\phi_2)+i(\phi_3 -\phi_4)$. Since each $\phi_j$ is k-positive, then $\phi_j:\text{OMAX}_k(\cl S)\to B(\cl H)$ is completely positive, and hence completely bounded for $j=1,2,3,4$. Hence $\phi:\text{OMAX}_k(\cl S)\to B(\cl H)$ is completely bounded, too.\\ Conversely, if the identity map from OMAX$_k(\cl S)$ to MAX$(\cl S_{k-min})$ is completely bounded and $\phi: \cl S_{k-min}\to B(\cl H)$ is bounded, then $\phi:\text{MAX}(\cl S_{k-min})\to B(\cl H)$ is completely bounded and hence $\phi:\text{OMAX}_k(\cl S)\to B(\cl H)$ is completely bounded. Applying Wittstock's decomposition theorem, we have that $\phi=(\phi_1 -\phi_2)+i(\phi_3 -\phi_4),$ where each $\phi_j :\text{OMAX}_k(\cl S) \to B(\cl H)$ is completely positive, and therefore k-positive on $\cl S$.
\end{proof}

\section{The Matricial State Spaces of ${{\cl C}^{k-max}(\cl S)}$ and ${{\cl C}^{k-min}(\cl S)}$}\label{mat}

A matricial order on a $*$-vector space induces a natural matrix order on
its dual space. There is a correspondence between the various operator system structures that an AOU space can be endowed with and the corresponding matricial state spaces. Unfortunately, duals of AOU spaces are not in general AOU spaces, but they are normed $*$-vector spaces. 
As was shown in ~\cite{PT}, the order norm on the self-adjoint part $V_{sa}$ of an AOU space $(V,V^+,e)$ has many possible extensions to a norm on $V$, but all these norms are equivalent and hence the set of continuous linear functionals on $V$ with respect to any of these norms coincides with the same space which we shall denote by $V'$ and call {\bf the dual space of $V$}. For a functional $f\in V'$  we let $f^*\in V'$ be the functional given by $f^*(v) =\overline{f(v^*)}$; the mapping $f \to f^*$ turns $V'$ into a $*$-vector space.\\\\
\indent Given an AOU space $(V,V^+,e)$ and its dual $V'$, then let $M_{n,m}(V')$ denote the set of all $n\times m$ matrices with entries in $V'$, $n,m \in \bb N$. Then $M_{n,m}(V')$ together with natural addition and scalar multiplication is a complex vector space, which can be linearly identified as $M_{n,m}(V')\cong M_{n,m}\otimes V'\cong V'\otimes M_{n,m}$ by using the canonical matrix unit system $\{E_{i,j}\}_{i,j=1}^{n,m}$ of $M_{n,m}$. The direct sum and the matrix product operations that link these matrix linear spaces are defined in the same way as described in Section~\ref{prel}.\\\\
\indent Let $f:M_{n,m}(V)\to \bb C$ be a linear map on the complex vector space $M_{n,m}(V)$. We define $f_{ij}:V\to C$ by $f_{ij}(a)=f(E_{ij}\otimes a),\, a\in V$. Then for any $v=(v_{ij})\in M_{n,m}(V)$, we have $f(v)=\sum_{i,j}f_{ij}(v_{ij})$. We denote the vector space of such linear maps by $\cl L(M_{n,m}(V),\bb C)$.\\ Given $X=(x_{ki})\in M_{p,n}$ and $Y=(y_{jl})\in M_{m,q}$, we define $Xf:M_{p,m}(V)\to \bb C$ and $fY:M_{n,q}(V)\to \bb C$ by $$Xf=\bigg(\sum_{i=1}^nx_{ki}f_{ij}\bigg)_{k,j=1}^{p,m}\text{ and  } fY=\bigg(\sum_{j=1}^mf_{ij}y_{jl}\bigg)_{i,l=1}^{n,q},\text{ respectively}.$$
\begin{lemma} Let $(V,V^+,e)$ be an AOU space and $f:M_{n,m}(V)\to \bb C$ be a linear map, $n,m\in \bb N$. If $X\in M_{p,n}$ and $Y\in M_{m,q}$, $p,q\in \bb N$, then $Xf:M_{p,m}(V)\to \bb C$ and $fY:M_{n,q}(V)\to \bb C$ are linear and $$(Xf)(v)=f(X^tv)\text{   and   }(fY)(w)=f(wY^t),$$ where $v\in M_{p,m}(V)$ and $w\in M_{n,q}(V)$.
\begin{proof} Let $f=(f_{ij}):M_{n,m}(V)\to \bb C$ be a linear map and let $X=(x_{ki})\in M_{p,n}$ and $Y=(y_{jl})\in M_{m,q}$ be two arbitrary scalar matrices. It is trivial that both $Xf$ and $fY$ are linear functions on $M_{p,m}(V)$ and $M_{n,q}(V)$, respectively. Let $v=(v_{kj})\in M_{p,m}(V)$ and $w=(w_{il})\in M_{n,q}(V)$. Then we have
\begin{eqnarray*} (Xf)(v)&=&\sum_{k,j=1}^{p,m}(Xf)_{kj}(v_{kj})=\sum_{k,j=1}^{p,m}(\sum_{i=1}^nx_{ki}f_{ij})(v_{kj})\\ &=&\sum_{k,j=1}^{p,m}\sum_{i=1}^nx_{ki}f_{ij}(v_{kj})=\sum_{i,j=1}^{n,m}(\sum_{k=1}^pf_{ij}(x_{ki}v_{kj}))\\
&=&\sum_{i,j=1}^{n,m}f_{ij}(\sum_{k=1}^px_{ki}v_{kj})=\sum_{i,j=1}^{n,m}f_{ij}((X^tv)_{ij})=f(X^tv)\end{eqnarray*}
and
\begin{eqnarray*} (fY)(w)&=&\sum_{i,l=1}^{n,q}(fY)_{il}(w_{il})=\sum_{i,l=1}^{n,q}(\sum_{j=1}^m (f_{ij}y_{jl})(w_{il})\\ &=&\sum_{i,l=1}^{n,q}\sum_{j=1}^m f_{ij}(w_{il})y_{jl}=\sum_{i,j=1}^{n,m}(\sum_{l=1}^q f_{ij}(w_{il}y_{jl}))\\
&=&\sum_{i,j=1}^{n,m}f_{ij}(\sum_{l=1}^q w_{il}y_{jl})=\sum_{i,j=1}^{n,m}f_{ij}((wY^t)_{ij})=f(wY^t).\end{eqnarray*}
\end{proof}
\end{lemma}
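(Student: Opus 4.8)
The plan is to verify both identities by a direct coordinate computation, using the entry-wise pairing $f(v)=\sum_{i,j}f_{ij}(v_{ij})$ recorded just above together with the linearity of each coordinate functional $f_{ij}$. Linearity of $Xf$ and $fY$ needs no real argument: each entry $(Xf)_{kj}=\sum_i x_{ki}f_{ij}$ is a finite scalar combination of the linear functionals $f_{ij}$ and is therefore linear, and likewise for $(fY)_{il}=\sum_j f_{ij}y_{jl}$; since $Xf$ and $fY$ act through these entries by the same pairing formula, they are linear functionals on $M_{p,m}(V)$ and $M_{n,q}(V)$ respectively.

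For the first identity I would fix $v=(v_{kj})\in M_{p,m}(V)$ and expand the left side from the definition of $Xf$, obtaining $(Xf)(v)=\sum_{k,j}(Xf)_{kj}(v_{kj})=\sum_{i,j,k}x_{ki}\,f_{ij}(v_{kj})$. For the right side I would note that $X^t\in M_{n,p}$, so $X^tv\in M_{n,m}(V)$ has $(i,j)$ entry $(X^tv)_{ij}=\sum_k x_{ki}v_{kj}$; applying the pairing for $f$ and pulling the scalars $x_{ki}$ out of $f_{ij}$ by linearity gives $f(X^tv)=\sum_{i,j}f_{ij}\big(\sum_k x_{ki}v_{kj}\big)=\sum_{i,j,k}x_{ki}\,f_{ij}(v_{kj})$, the same triple sum. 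The identity $(fY)(w)=f(wY^t)$ then follows by the mirror-image computation: one expands $(fY)(w)=\sum_{i,j,l}y_{jl}\,f_{ij}(w_{il})$ and recognizes $(wY^t)_{ij}=\sum_l w_{il}y_{jl}$, since $Y^t\in M_{q,m}$ and $w\in M_{n,q}(V)$ force $wY^t\in M_{n,m}(V)$.

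The computation is routine, so the only point deserving genuine attention --- and the one I would treat as the crux --- is the bookkeeping of indices, in particular confirming that it is the \emph{transpose} of $X$ and $Y$ that reconciles the two sides. The transpose enters because in $Xf$ the scalar $x_{ki}$ is contracted against the first index $i$ of $f_{ij}$, whereas in the product $X^tv$ the same scalar $x_{ki}=(X^t)_{ik}$ is contracted against the first index $k$ of $v$; writing both sums out and relabeling shows these contractions yield identical sums, and a parallel remark explains the $Y^t$ in the second formula. I would also check dimensions at each stage --- that $X^tv$ and $wY^t$ both land in $M_{n,m}(V)$, the domain of $f$ --- so that every expression being paired is legitimately defined.
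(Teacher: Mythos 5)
Your proposal is correct and follows essentially the same route as the paper's proof: expand $(Xf)(v)$ and $(fY)(w)$ via the entry-wise pairing into triple sums, pull scalars out of each $f_{ij}$ by linearity, and recognize the resulting inner sums as the entries of $X^tv$ and $wY^t$ respectively. The index bookkeeping you flag as the crux is exactly the content of the paper's chain of equalities, so there is nothing to add.
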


 Let $(V,V^+, e)$ be an AOU space and let $f=(f_{ij}):M_{n,m}(V)\to \bb C$ be a linear map. There exists a linear map from the vector space of linear maps from $M_{n,m}(V)$, $\cl L(M_{n,m}(V),\bb C)$, into the vector space of linear maps from $V$ into $M_{n,m}$, denoted by $\cl L(V, M_{n,m})$, and vice versa. Hence, given $f\in \cl L(M_{n,m}(V),\bb C)$, we associate to $f$ a linear map $\phi_f=(f_{ij}):V\to M_{n,m}$ by the following formula:$$\phi_f(a)=(f_{ij}(a))\in M_{n,m}, \, a\in V.$$ On the other hand, given $\phi=(\phi_{ij})\in \cl L(V,M_{n,m})$, we associate to $\phi$ a linear map $f_{\phi}=(\phi_{ij}):M_{n,m}(V)\to \bb C$ by the following formula: $$f_\phi(v)=\sum_{i,j=1}^{n,m}\phi_{ij}(v_{ij})\in \bb C, \,v=(v_{ij})\in M_{n,m}(V).$$
Based on this correspondence between these vector spaces of linear maps, if $f=(f_{ij})\in \cl L(M_{n,m}(V),\bb C)$ with each $f_{ij}\in V'$, then $\phi_f=(f_{ij})$ can be regarded as sitting inside $M_{n,m}(V')$. Conversely, we identify $\phi=(\phi_{ij}) \in M_{n,m}(V')$  with the linear map $f_\phi: M_{n,m}(V) \to \bb C$ defined as above.\\

Let $\phi=(\phi_{ij}):V\to M_{n,m}$ be a linear map with $\phi_{ij}\in V'$. Given $A\in M_{n,p}$ and $B\in M_{m,q}$, $p,q\in \bb N$, then $A^*\phi B\in M_{p,q}(V')$ since both $M_{n,m}(V')$ and $M_{p,q}(V')$ are complex vector spaces and matrix product is a well-defined operation on them (see Section~\ref{prel}).\\ Write $A=\left[\begin{matrix}a_1 & a_2 &\cdots & a_p\end{matrix}\right]$ and $B=\left[\begin{matrix}b_1 & b_2 &\cdots & b_q\end{matrix}\right]$ where $a_k\in \bb C^n$ and $b_l\in \bb C^m$, $1\le k\le p,\,1\le l\le q$. Then we can write $A^*\phi B=(a^*_k\phi b_l)\in M_{p,q}(V')$, where each $a^*_k\phi b_l\in V'$. We identify $A^*\phi B\in M_{p,q}(V')$ with the linear map $F_{A^*\phi B}=(a_k^*\phi b_l):M_{p,q}(V)\to \bb C$ given by\\ $$F_{A^*\phi B}((v_{kl}))=\sum_{k,l=1}^{p,q}(A^*\phi B)_{kl}(v_{kl})=\sum_{k,l=1}^{p,q}(a^*_k\phi b_l)(v_{kl}),\, (v_{kl})\in M_{p,q}(V). $$ When $p=q=1$, we have $A\in \bb C^n$, $B\in \bb C^m$ and $A^*\phi B\in V'$. Moreover, $F_{A^*\phi B}:V\to \bb C$ is given by $A^*\phi B$ itself. One can straightforwardly show that $$F_{A^*\phi B}(a)=(A^*\phi B)(a)=A^*\phi(a) B, \text{ for all }a\in V.$$ The next lemma shows how to evaluate such maps when $p\ne 1,\,q\ne 1$. Before showing this result, we will discuss the matrix-vector correspondence and introduce a new notation which we will be using widely in the next results. \\

\noindent{\bf The Matrix -- Vector Correspondence}\label{mvc}\\

\noindent Let $X\in M_{n,m}$ be a matrix, $n,m\in \bb N$. Write $X$ in terms of its columns $X=\left[\begin{matrix}x_1 & x_2 & \cdots & x_m\end{matrix}\right]$ with $x_j \in \bb C^n$, $1\le j\le m$. We set $\text{vec}(X)=\left[\begin{matrix}x_1\\ x_2\\\vdots \\ x_m\end{matrix}\right]\in \bb C^m\otimes \bb C^n$ and call vec$(X)$ \\ {\bf the vectorization} of the matrix $X$. One can think of this process as a linear map $$\text{vec}:M_{n,m}\to \bb C^m\otimes \bb C^n \text{ given by vec}(E_{ij})=e_j\otimes e_i,$$ where $\{e_i\}_{i=1}^n\subseteq \bb C^n$ and $\{e_j\}_{j=1}^m\subseteq \bb C^m$ are the canonical orthonormal bases.

\begin{lemma} Let $(V,V^+,e)$ be an AOU space and $V'$ be its dual. If $\phi=(\phi_{ij})\in M_{n,m}(V')$, $A\in M_{n,p}$ and $B\in M_{m,q}$ are given, $n,m,p,q\in \bb N$, then the linear map $F_{A^*\phi B}:M_{p,q}(V)\to \bb C$ is given by $$F_{A^*\phi B}(v)=\text{\emph{vec}}(A)^*\phi^{(p,q)}(v)\text{\emph{vec}}(B), \text{ for all }v\in M_{p,q}(V).$$
\begin{proof} Let $\phi=(\phi_{ij})\in M_{n,m}(V)$, $A=\left[\begin{matrix} a_1 & a_2 &\cdots & a_p\end{matrix}\right]\in M_{n,p}$ and $B=\left[\begin{matrix} b_1 & b_2 &\cdots & b_q\end{matrix}\right]$ where $a_k\in \bb C^n$ and $b_l\in \bb C^m$, $1\le k\le p,\,1\le l\le q$, be given. Then $A^*\phi B\in M_{p,q}(V')$ and $F_{A^*\phi B}\in \cl L(M_{p,q}(V),\bb C)$. Let $v=(v_{kl})\in M_{p,q}(V)$, then we have
\begin{eqnarray*}F_{A^*\phi B}(v) &=&\sum_{k,l=1}^{p,q}(A^*\phi B)_{kl}(v_{k,l})=\sum_{k,l=1}^{p,q}(a^*_k\phi b_l)(v_{kl})\\ &=&\sum_{k,l=1}^{p,q} a^*_k\phi(v_{kl}) b_l=\left[\begin{matrix}a^*_1 & a^*_2 &\cdots & a^*_p\end{matrix}\right](\phi(v_{kl}))\left[\begin{matrix} b_1 \\ b_2 \\\vdots \\ b_q\end{matrix}\right]\\ &=&\text{vec}(A)^*\phi^{(p,q)}(v)\text{vec}(B).\end{eqnarray*}
\end{proof}
\end{lemma}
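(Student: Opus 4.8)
The plan is to prove the identity by a direct unwinding of three definitions---the matrix product $A^{*}\phi B$, the functional $F_{A^{*}\phi B}$ attached to an element of $M_{p,q}(V')$, and the vectorization map---and then to recognize the resulting double sum as a single block-matrix product. First I would write $A$ and $B$ in terms of their columns, $A=[\,a_{1}\ a_{2}\ \cdots\ a_{p}\,]$ with $a_{k}\in\bb C^{n}$ and $B=[\,b_{1}\ b_{2}\ \cdots\ b_{q}\,]$ with $b_{l}\in\bb C^{m}$, so that $A^{*}\in M_{p,n}$ and the product $A^{*}\phi B\in M_{p,q}(V')$ is given by the matrix-product formula of Section~\ref{prel}.

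Next I would compute the $(k,l)$-entry of $A^{*}\phi B$ explicitly. From the matrix-product formula, $(A^{*}\phi B)_{kl}=\sum_{i,j}\overline{A_{ik}}\,\phi_{ij}\,B_{jl}$, and since $(a_{k})_{i}=A_{ik}$ and $(b_{l})_{j}=B_{jl}$ this entry is exactly the functional $a_{k}^{*}\phi b_{l}\in V'$. Its action on a single element $w\in V$ is $(a_{k}^{*}\phi b_{l})(w)=a_{k}^{*}\phi(w)b_{l}$, which is the $p=q=1$ case already recorded immediately before the lemma.

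Then, applying the definition of the functional associated to $A^{*}\phi B\in M_{p,q}(V')$, for $v=(v_{kl})\in M_{p,q}(V)$ I would obtain $F_{A^{*}\phi B}(v)=\sum_{k,l}(A^{*}\phi B)_{kl}(v_{kl})=\sum_{k,l}a_{k}^{*}\phi(v_{kl})b_{l}$. The final step is to read this double sum as one sandwiched product: the matrix $\phi^{(p,q)}(v)=(\phi(v_{kl}))_{k,l}$ is a $p\times q$ array of $n\times m$ blocks, and with $\text{vec}(A)^{*}$ equal to the block row $[\,a_{1}^{*}\ a_{2}^{*}\ \cdots\ a_{p}^{*}\,]$ and $\text{vec}(B)$ the block column whose $l$-th block is $b_{l}$, the product of the block row, the block matrix, and the block column is precisely $\sum_{k,l}a_{k}^{*}\phi(v_{kl})b_{l}$, yielding $\text{vec}(A)^{*}\phi^{(p,q)}(v)\text{vec}(B)$.

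Since every step is definitional, the only real obstacle is bookkeeping. One must keep the vectorization convention straight---$\text{vec}$ stacks the columns, so $\text{vec}(A)\in\bb C^{p}\otimes\bb C^{n}$ has $a_{k}$ as its $k$-th block and $\text{vec}(A)^{*}$ becomes the block row $[\,a_{1}^{*}\ \cdots\ a_{p}^{*}\,]$---and one must check that the block partition of $\phi^{(p,q)}(v)$ into the $\phi(v_{kl})$ aligns with the partitions of $\text{vec}(A)^{*}$ and $\text{vec}(B)$, so that the running indices $k$ and $l$ in the block product match those in the double sum. Getting the order of the tensor factors $\bb C^{p}\otimes\bb C^{n}$ (rather than $\bb C^{n}\otimes\bb C^{p}$) correct is the one place where an index-ordering error could creep in.
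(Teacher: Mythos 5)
Your proposal is correct and follows essentially the same route as the paper: expand $(A^{*}\phi B)_{kl}$ as $a_{k}^{*}\phi b_{l}$, apply the definition of $F_{A^{*}\phi B}$ to get the double sum $\sum_{k,l}a_{k}^{*}\phi(v_{kl})b_{l}$, and recognize it as the block product $\mathrm{vec}(A)^{*}\phi^{(p,q)}(v)\mathrm{vec}(B)$. The bookkeeping points you flag (column-stacking convention for $\mathrm{vec}$ and alignment of the block partitions) are exactly the ones the paper's computation relies on.
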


\begin{defn} Given an operator system structure $\{P_n\}_{n=1}^\infty$ on an AOU space $(V, V^+, e)$, then {\bf the dual of each cone $P_n$} is given by
$$P_n^d =\{ f:M_n(V) \to \bb C|\, f \text{ linear and } f(P_n)\subseteq \bb R^+ \}.$$ Given $f\in P_n^d$, we define $f_{ij}:V\to \bb C$ by $f_{ij}(v)=f(v\otimes E_{ij})$, where $E_{ij}$'s are the canonical matrix units for $M_n$.
\end{defn}

\indent Given an operator system structure $\{P_n\}_{n=1}^\infty$ on an AOU space $(V, V^+, e)$ and $f\in P_n^d$, then the functionals $f_{ij}$ belong to $V'$, as was shown in ~\cite{PTT}. Identifying each $f \in P^d_n$ with $(f_{ij}) \in M_n(V'),$  we shall regard $P_n^d$ as sitting inside $M_n(V')$.  \\

The dual cones of a given operator system structure $\{P_n\}_{n=1}^\infty$ on an AOU space $(V, V^+, e)$ form a matrix ordering on the dual normed space $V'$. Moreover, given a matrix ordering $\{Q_n\}_{n=1}^\infty$ on $V'$, one can construct an operator system structure on $V$ as the following result shows:

\begin{thm}\cite[Theorem 4.3]{PTT} Let $\{ P_n\}_{n=1}^\infty$ be an operator system structure on the AOU space $(V,V^+,e)$. Then $\{ P_n^d\}_{n=1}^\infty$ is a matrix ordering on the ordered $*$-vector space $V'$ with $P_1^d=(V^+)^d$. Conversely, if $\{Q_n\}_{n=1}^\infty$ is any matrix ordering on the $*$-vector space $V'$ with $Q_1=(V^+)^d$ and we set
$$ ^dQ_n=\{v\in M_n(V):\, f(v) \ge 0\, \text{ for all } \, f\in Q_n\}, $$
then $\{ ^dQ_n\}_{n=1}^\infty$ is an operator system structure on $(V,V^+,e)$.
\end{thm}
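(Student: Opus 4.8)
The plan is to handle the two implications separately, since the forward direction is essentially formal bookkeeping while the converse carries the analytic content. For the forward direction I would first check that each $P_n^d$ is a proper cone in $M_n(V')_{sa}$: closure under addition and nonnegative scalars is immediate from the definition, self-adjointness of any $f\in P_n^d$ holds because $e_n$ is an order unit so $P_n-P_n=M_n(V)_{sa}$ and hence $f$ is real on self-adjoint elements, and properness follows from the same observation, since a functional nonnegative on both $P_n$ and $-P_n$ vanishes on $M_n(V)_{sa}$ and is therefore $0$. The only substantive step is compatibility, and here I would lean on the adjoint identities $(Xf)(v)=f(X^tv)$ and $(fY)(w)=f(wY^t)$ from the preceding lemmas: composed, they give $(X^*fX)(w)=f(\bar X\,w\,X^t)$ for every $w\in M_m(V)$ and every $X\in M_{n,m}$. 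Writing $\bar X\,w\,X^t=(X^t)^*w(X^t)$ with $X^t\in M_{m,n}$, compatibility of $\{P_n\}$ sends $P_m$ into $P_n$, so the right-hand side is nonnegative and $X^*P_n^dX\subseteq P_m^d$. Finally $P_1^d=(V^+)^d$ holds by definition because $P_1=V^+$.

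For the converse I would dispatch the cone and compatibility axioms the same way. Each $^dQ_n$ is visibly a cone, being an intersection of the half-spaces $\{v:f(v)\ge 0\}$, and the same adjoint identities, read in the opposite direction, reduce $X^*(^dQ_n)X\subseteq{}^dQ_m$ to compatibility of $\{Q_n\}$: for $v\in{}^dQ_n$, $X\in M_{n,m}$ and $f\in Q_m$ one rewrites $f(X^*vX)=g(v)$ where $g=\bar X f X^t\in Q_n$ is obtained by moving $X,X^*$ onto the functional, whence $g(v)\ge 0$. The genuinely important point is the identity $^dQ_1=V^+$. By definition $^dQ_1=\{v\in V: f(v)\ge 0\ \text{for all}\ f\in(V^+)^d\}=(V^+)^{dd}$, so what must be proved is the bipolar equality $(V^+)^{dd}=V^+$. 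The inclusion $V^+\subseteq(V^+)^{dd}$ is trivial, and the reverse is exactly Proposition~\ref{v=0} applied with $k=1$: if $s(v)\ge 0$ for every state $s$, then $v\in V^+$. Since every element of $(V^+)^d$ is a nonnegative multiple of a state, this yields $(V^+)^{dd}=V^+$, and it is precisely here that the AOU (Archimedean) hypothesis is essential.

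It remains to verify that $e$ is an Archimedean matrix order unit for $\{^dQ_n\}$, and this is where I expect the main obstacle to lie. The Archimedean half is painless: if $te_n+v\in{}^dQ_n$ for all $t>0$, then $tf(e_n)+f(v)\ge 0$ for every $f\in Q_n$, and letting $t\to 0^+$ gives $f(v)\ge 0$, so $v\in{}^dQ_n$. The order-unit half is more delicate, since for self-adjoint $v\in M_n(V)_{sa}$ I must produce a single $t>0$ with $tf(e_n)+f(v)\ge 0$ for \emph{all} $f\in Q_n$ simultaneously. The strategy is to normalize. Compatibility applied to the column vectors $E_i\in M_{n,1}$ gives $E_i^*fE_i=f_{ii}\in Q_1=(V^+)^d$, so the diagonal entries of any $f\in Q_n$ are positive functionals and $f(e_n)=\sum_i f_{ii}(e)\ge 0$. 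Moreover $f(e_n)=0$ forces each $f_{ii}$ to annihilate the order unit $e$, hence $f_{ii}=0$, and then the positivity of the $2\times 2$ corners of $f$ (a Cauchy--Schwarz argument, using properness of $(V^+)^d$) kills the off-diagonal entries, so $f=0$. Thus every nonzero $f\in Q_n$ rescales to a matrix state $\hat f=f/f(e_n)$ with $\hat f(e_n)=1$, and on this set $|\hat f(v)|$ is bounded by a constant times the order norm of $v$ in $M_n(V)$ coming from the given operator system structure; taking $t$ equal to this uniform bound finishes the order-unit property. The same separation supplies properness of $^dQ_n$.

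In summary, the hard ingredients are the bipolar identity $(V^+)^{dd}=V^+$, which I would obtain directly from Proposition~\ref{v=0}, and the uniform control of matrix states needed for the order-unit property; every other step is a formal manipulation through the adjoint identities of the earlier lemmas.
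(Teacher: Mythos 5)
The paper offers no proof of this statement---it is imported verbatim from \cite[Theorem 4.3]{PTT}---so your argument can only be measured against the standard one. Your forward direction is complete: the adjoint identities $(Xf)(v)=f(X^tv)$ and $(fY)(w)=f(wY^t)$ do combine to give $(X^*fX)(w)=f\big((X^t)^*\,w\,X^t\big)$, which reduces compatibility of $\{P_n^d\}$ to that of $\{P_n\}$, and the self-adjointness and properness arguments via $P_n-P_n=M_n(V)_{sa}$ are fine. In the converse, your identification of ${}^dQ_1=(V^+)^{dd}=V^+$ as the crux, handled by Proposition~\ref{v=0} with $k=1$ together with the fact that every nonzero positive functional is a positive multiple of a state, is exactly right, as is the Archimedean half of the order-unit verification.

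The gap is in the order-unit half, precisely where you predicted trouble. After normalizing to $\hat f=f/f(e_n)$ you assert that $|\hat f(v)|$ is bounded by a constant times ``the order norm of $v$ in $M_n(V)$ coming from the given operator system structure.'' In the converse there \emph{is} no given operator system structure on $V$: the hypothesis supplies only a matrix ordering on the dual $V'$, so the norm you appeal to is undefined, and the uniform bound---which is essentially equivalent to the order-unit property you are trying to prove---is asserted rather than established. The standard repair avoids normalization altogether: compatibility together with $Q_1=(V^+)^d$ shows that every $f\in Q_n$ is automatically nonnegative on the maximal cone $D_n^{max}(V)=\{\sum_i a_i\otimes s_i:\ a_i\in M_n^+,\ s_i\in V^+\}$, since $f(xx^*\otimes s)=(\bar x^{\,*}f\bar x)(s)\ge 0$ with $\bar x^{\,*}f\bar x\in Q_1$; and $e_n$ is an order unit for $\big(M_n(V),D_n^{max}(V)\big)$, as in Proposition~\ref{kmaxo} with $k=1$ (whose proof uses only the first-level order structure). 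Choosing $t>0$ with $te_n+v\in D_n^{max}(V)$ then gives $f(te_n+v)\ge 0$ for \emph{all} $f\in Q_n$ at once. Your normalization route can also be completed, but it needs a genuine Cauchy--Schwarz estimate $|\hat f_{ij}(s)|^2\le \hat f_{ii}(s)\hat f_{jj}(s)$ for $s\in V^+$, coming from positivity of the $2\times 2$ compressions of $\hat f$, applied to a decomposition of each entry of $v$ as a difference of positives---not only the degenerate case $\hat f_{ii}=0$ that you used to rule out $f(e_n)=0$. A last small omission: one must also check ${}^dQ_n\subseteq M_n(V)_{sa}$; this follows from Proposition~\ref{v=0} applied to the rank-one functionals $Y^*gY\in Q_n$ obtained from states $g$ and rows $Y\in M_{1,n}$, followed by polarization.
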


Note that the weak$^*$-topology on $V'$ endows $M_n(V')$ with a topology
which coincides with the weak$^*$-topology that comes from the identification
of $M_n(V')$ with the dual of $M_n(V)$. Thus, we shall refer to this topology,
unambiguously, as the weak$^*$-topology on $M_n(V')$.\\
The mappings $P_n \to P_n^d$ and $Q_n\to  {^dQ}_n$ establish a one-to-one inclusion-reversing correspondence between operator system structures $\{P_n\}_{n=1}^\infty$ on  $(V,V^+,e)$ and matrix orderings $\{Q_n\}_{n=1}^\infty$ on $V'$ with $Q_1=(V^+)^d$ for which each  $Q_n$ is weak$^*$-closed (see~\cite{PTT} for more details.)\\

Let $\cl S$ be an operator system with unit $e$. The natural operator system structure of $\cl S$ induces a natural matrix order on its dual space $\cl S'$, which makes $\cl S'$ an operator system too. The dual cones on $\cl S'$ can be described as  follows:
$$M_n(\cl S')^+=(M_n(\cl S)^+)^d=\{f:M_n(\cl S)\to \bb C\,|\, f \text{ positive linear functional }\},$$ for all $n\in \bb N$. Moreover, one can verify that $M_n(\cl S')^+\cong CP(\cl S, M_n)\text{ for all } n.$\\ Knowing the matricial state space of a given operator system $\cl S$, we would like to find the corresponding matricial state spaces of the k-minimal and the k-maximal operator systems.

\begin{defn} Let $\cl S$ be a given operator system with unit $e$. For a fixed $k\in \bb N$, set
\begin{multline*}Q_n^{k-min}(\cl S')=\big\{F_{X^*G X}:M_n(\cl S)\to\bb C \,\big| \, X\in M_{mk,n}, \\ G=\text{diag}(\phi_1,\dots,\phi_m) \text{ with }\phi_i\in CP(\cl S, M_k), m\in \bb N\bigg\},\end{multline*} and
$$Q_n^{k-max}(\cl S')=\big\{(f_{ij})\in M_n(\cl S'): \, \left(f_{ij}^{(k)}(a)\right)\in M_{nk}^+, \text{ for all } a \in M_k(\cl S)^+\,\big\}.$$
\end{defn}

\begin{prop}\label{dual} Let $\cl S$ be an operator system with unit $e$. Then $\{Q_n^{k-min}(\cl S')\}_{n=1}^\infty$ and $\{Q_n^{k-max}(\cl S')\}_{n=1}^\infty$ are matrix orderings on $\cl S'$ with $Q_1^{k-min}(\cl S')= (\cl S^+)^d$ and $Q_1^{k-max}(\cl S')=(\cl S^+)^d$.
\end{prop}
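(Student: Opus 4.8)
The plan is to verify, for each of the two families, the three axioms of a matrix ordering on $\cl S'$ — that each $Q_n$ is a cone contained in $M_n(\cl S')_{sa}$, that each $Q_n$ is proper, and that the family is compatible, i.e. $Y^*Q_nY\subseteq Q_\ell$ for $Y\in M_{n,\ell}$ — and then to identify the two first levels with $(\cl S^+)^d$ separately. Throughout I will use the standard fact that a positive linear functional on an operator system is completely positive (\cite{VIP}), hence in particular $k$-positive, together with the fact that $\cl S^+$ (resp. $M_n(\cl S)^+$) spans $\cl S$ (resp. $M_n(\cl S)$) because $e$ is an Archimedean matrix order unit.

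For $\{Q_n^{k-max}(\cl S')\}$, closure under addition and nonnegative scalars is immediate from linearity of $(f_{ij})\mapsto\big(f_{ij}^{(k)}(a)\big)$. Self-adjointness I would extract from the definition itself: for $a\in M_k(\cl S)^+$ the matrix $\big(f_{ij}^{(k)}(a)\big)\in M_{nk}^+$ is in particular Hermitian, and comparing its $(i,j)$- and $(j,i)$-blocks while using $f^*(v)=\overline{f(v^*)}$ and $a=a^*$ yields $f_{ij}=f_{ji}^*$, that is $(f_{ij})^*=(f_{ij})$, where one passes from positive $a$ to all of $\cl S$ by spanning. Properness is then automatic, since $\pm f\in Q_n^{k-max}$ forces $\big(f_{ij}^{(k)}(a)\big)=0$ on the spanning set $M_k(\cl S)^+$ and hence $f=0$. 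For compatibility, given $Y\in M_{n,\ell}$ I would observe that the block array attached to $Y^*fY$ is exactly $(Y\otimes I_k)^*\big(f_{ij}^{(k)}(a)\big)(Y\otimes I_k)$, a congruence of a positive matrix and therefore positive. Finally, the defining inequality of $Q_1^{k-max}(\cl S')$ says precisely that $f:\cl S\to\bb C$ is $k$-positive, which by the fact above is equivalent to positivity; thus $Q_1^{k-max}(\cl S')=(\cl S^+)^d$.

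For $\{Q_n^{k-min}(\cl S')\}$ I would first record that $G=\text{diag}(\phi_1,\dots,\phi_m)$, a direct sum of the completely positive maps $\phi_i\in CP(\cl S,M_k)$, is itself completely positive as a map $\cl S\to M_{mk}$, and that by the matrix–vector correspondence its associated functional evaluates as $F_{X^*GX}(v)=\text{vec}(X)^*G^{(n)}(v)\,\text{vec}(X)$ for $v\in M_n(\cl S)$. Self-adjointness is then clear, since $G=G^*$ forces $X^*GX=(X^*GX)^*$ in $M_n(\cl S')$. Closure under addition comes from stacking $X$ over $Y$ and setting $K=\text{diag}(G,H)$, so that $F_{X^*GX}+F_{Y^*HY}=F_{Z^*KZ}$, and closure under nonnegative scalars by absorbing $\sqrt\lambda$ into $X$. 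Properness uses the displayed formula: since $G^{(n)}(v)\ge 0$ for $v\in M_n(\cl S)^+$, each $F_{X^*GX}$ is nonnegative there, so $\pm F_{X^*GX}\in Q_n^{k-min}$ forces vanishing on the spanning set $M_n(\cl S)^+$ and hence $F_{X^*GX}=0$. Compatibility is purely formal: for $Y\in M_{n,\ell}$, associativity of the matrix product in $M_\bullet(\cl S')$ gives $Y^*(X^*GX)Y=(XY)^*G(XY)$ with $XY\in M_{mk,\ell}$, which is again of the defining form with the same diagonal blocks $\phi_i$. For the first level, evaluating at $n=1$ gives $F_{X^*GX}(a)=\sum_i x_i^*\phi_i(a)x_i$ with $x_i\in\bb C^k$, a sum of positive functionals, so $Q_1^{k-min}(\cl S')\subseteq(\cl S^+)^d$; conversely any positive $f$ is recovered with $m=1$, $\phi_1=f(\cdot)I_k$ (completely positive because $f$ is) and $X=e_1$, giving the reverse inclusion.

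The only genuinely delicate step is the bookkeeping that matches the abstract conjugation $Y^*(\cdot)Y$ on $M_n(\cl S')$ with the functional-level operations, i.e. reconciling the identity $F_{A^*\phi B}(v)=\text{vec}(A)^*\phi^{(p,q)}(v)\text{vec}(B)$ with the transposes appearing in the pairing $(Xf)(v)=f(X^tv)$. The clean way around this, which I would adopt, is to carry out all the cone axioms entirely inside the matrix algebras $M_\bullet(\cl S')$ and invoke associativity of the matrix product, reserving the $\text{vec}$ formula only for the positivity estimates needed in properness and in the first-level computations; with that organization the compatibility assertions reduce to $(Y\otimes I_k)^*(\cdots)(Y\otimes I_k)$ in the maximal case and to $(XY)^*G(XY)$ in the minimal case. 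The other place requiring genuine care is the Hermitian-block computation underlying self-adjointness in the $k$-max case, where the interplay of $f^*(v)=\overline{f(v^*)}$ with the $*$-operation on $M_k(\cl S)$ must be tracked precisely.
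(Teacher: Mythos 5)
Your proof is correct and, on the first-level identifications $Q_1^{k-min}(\cl S')=Q_1^{k-max}(\cl S')=(\cl S^+)^d$, follows essentially the same computations as the paper: the estimate $F_{X^*GX}(a)=\sum_i x_i^*\phi_i(a)x_i\ge 0$, the recovery of a positive $f$ via $I_k\otimes f$ compressed by $e_1$, the test element $s\otimes E_{11}$, and the fact that positive functionals are completely positive. The only difference is one of completeness: the paper dismisses the verification that the two families are matrix orderings as something ``one can straightforwardly check,'' whereas you actually carry it out, and your self-adjointness, properness, and compatibility arguments (in particular the congruence $(Y\otimes I_k)^*\bigl(f_{ij}^{(k)}(a)\bigr)(Y\otimes I_k)$ in the $k$-max case and the identity $Y^*(X^*GX)Y=(XY)^*G(XY)$ in the $k$-min case) are exactly the intended routine checks and are sound.
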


\begin{proof} One can straightforwardly check that both these families of cones are matrix orderings on $\cl S'$. Here, we will just show $Q_1^{k-min}(\cl S')=(\cl S^+)^d$ and $Q_1^{k-max}(\cl S')=(\cl S^+)^d$.\\\\
$(1)$ $Q_1^{k-min}(\cl S')= (\cl S^+)^d$:\\ Let $F_{X^* G X} \in Q_1^{k-min}(\cl S')\subseteq \cl S'$ with $X=\left[\begin{matrix}x_1\\\vdots\\x_m\end{matrix}\right] \in \bb C^{mk}$ where $x_i\in \bb C^k$, and\\ $G=\text{diag}(\phi_1,\dots,\phi_m)$ with $\phi_i \in CP(\cl S,M_k)$. Let $a\in \cl S^+$, then $$F_{X^*GX }(a)=(X^*GX)(a) = \bigg(\sum_{i=1}^m x_i^*\phi_ix_i\bigg)(a)=\sum_{i=1}^m x^*_i(\underbrace{\phi_i(a)}_{\ge 0})x_i \ge 0.$$ This implies that  $F_{X^*G X} \in (\cl S^+)^d$, i.e. $Q_1^{k-min}(\cl S')\subseteq (\cl S^+)^d$. \\Conversely, let $f\in (\cl S^+)^d$. Then the map $$\phi =I_k\otimes f=\left[\begin{matrix}{\begin{matrix}f& \\ & f\end{matrix}} & \text{\LARGE{0}} \\\text{\LARGE{0}} & {\begin{matrix}\ddots & \\ & f\end{matrix}}\end{matrix}\right]:\cl S\to M_k $$ is a well-defined completely positive linear map on $\cl S$.\\ Let $\alpha =\left[\begin{matrix}1\\ 0\\ \vdots\\ 0\end{matrix}\right]$, then $f=\alpha^* \phi \alpha \in Q_1^{k-min}(\cl S')$. Hence, $(\cl S^+)^d \subseteq Q_1^{k-min}(\cl S')$. \\As a result, $Q_1^{k-min}(\cl S')= (\cl S^+)^d$.\\\\
$(2)$ $Q_1^{k-max}(\cl S')= (\cl S^+)^d$:\\ Let $f\in Q_1^{k-max}(\cl S')\subseteq \cl S'$. Then, by the definition of $Q_n^{k-max}(\cl S')$, we have $f^{(k)}(a) \ge 0$ for all $a\in M_k(\cl S)^+$. Let $a= s\otimes E_{11} \in M_k(\cl S)^+$ with $s\in S^+$. Then
$$ f^{(k)}(a) = f(s)\otimes E_{11} \ge 0 \, \text{ implies } \, f(s) \ge 0 \text{ for all } s \in \cl S^+ .$$
Therefore, $f\in (\cl S^+)^d$. Hence,  $Q_1^{k-max}(\cl S')\subseteq (\cl S^+)^d$.\\ Conversely,
let $f \in (\cl S^+)^d$. Then $f: \cl S\to \bb C$ is completely positive. It follows $f^{(k)}(M_k(\cl S)^+)\subseteq M_k^+$, i.e. $f\in Q_1^{k-max}(\cl S')$ and $(\cl S^+)^d \subseteq Q_1^{k-max}(\cl S')$. As a result, $Q_1^{k-max}(\cl S')= (\cl S^+)^d$.
\end{proof}

\begin{thm}\label{duals} Let $\cl S$ be an operator system with unit $e$. Then $ ^dQ_n^{k-min}(\cl S')=C_n^{k-min}(\cl S)$ and $(C_n^{k-max}(\cl S))^d=Q_n^{k-max}(\cl S')$.
\end{thm}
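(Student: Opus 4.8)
The plan is to prove the two identities separately, in each case using the explicit formula $F_{A^*\phi B}(v)=\text{vec}(A)^*\phi^{(p,q)}(v)\text{vec}(B)$ from the matrix--vector correspondence lemma to convert the defining cone-membership conditions into positivity statements about a sandwiched matrix, which can then be probed by letting the sandwiching scalar matrix range freely.

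For the first identity, ${}^dQ_n^{k-min}(\cl S')=C_n^{k-min}(\cl S)$, I would fix $v\in M_n(\cl S)$ and a generator $F_{X^*GX}\in Q_n^{k-min}(\cl S')$ with $X\in M_{mk,n}$ and $G=\text{diag}(\phi_1,\dots,\phi_m)$, $\phi_l\in CP(\cl S,M_k)$. Since $G$ is block diagonal, $X^*GX=\sum_{l=1}^m X_l^*\phi_l X_l$ with $X_l\in M_{k,n}$, and applying the correspondence lemma to each summand yields $F_{X^*GX}(v)=\sum_{l=1}^m\langle\phi_l^{(n)}(v)\zeta_l,\zeta_l\rangle$ for suitable vectors $\zeta_l\in\bb C^{nk}$ obtained as vectorizations of the blocks $X_l$. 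The inclusion $\supseteq$ is then immediate: if $v\in C_n^{k-min}(\cl S)$ then each $\phi_l^{(n)}(v)=(\phi_l(v_{ij}))\ge 0$ (reading $CP(\cl S,M_k)$ as the $k$-positive maps via Proposition~\ref{kpos}), so every generator evaluates nonnegatively. For $\subseteq$, given an arbitrary $k$-positive $\phi:\cl S\to M_k$ I would take the single-block generators $m=1$, $G=\phi$, and let $X\in M_{k,n}$ range freely; then $\zeta$ sweeps out all of $\bb C^{nk}$, so the assumption $F_{X^*\phi X}(v)\ge 0$ forces $\phi^{(n)}(v)\ge 0$, and as $\phi$ was arbitrary this is exactly $v\in C_n^{k-min}(\cl S)$.

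For the second identity, $(C_n^{k-max}(\cl S))^d=Q_n^{k-max}(\cl S')$, the first step is to replace $C_n^{k-max}(\cl S)$ by $D_n^{k-max}(\cl S)$ in the dual. Since $e_n=I_n\otimes e\in D_n^{max}(\cl S)\subseteq D_n^{k-max}(\cl S)$ and the latter is a cone closed under addition (Proposition~\ref{kmaxo}), one has $D_n^{k-max}(\cl S)\subseteq C_n^{k-max}(\cl S)$, hence $(C_n^{k-max})^d\subseteq(D_n^{k-max})^d$; conversely, for $A\in C_n^{k-max}(\cl S)$ we have $re_n+A\in D_n^{k-max}(\cl S)$ for all $r>0$, and letting $r\to 0^+$ in $rf(e_n)+f(A)\ge 0$ shows any $f\in(D_n^{k-max})^d$ is also nonnegative on $C_n^{k-max}(\cl S)$. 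Thus $(C_n^{k-max})^d=(D_n^{k-max})^d$. I would then compute, for $f\leftrightarrow\phi=(f_{ij}):\cl S\to M_n$ and a single generator $BDB^*$ with $B\in M_{n,k}$, $D\in M_k(\cl S)^+$, that $f(BDB^*)=\langle\phi^{(k)}(D)\xi,\xi\rangle$ where $\xi\in\bb C^{nk}$ is the vectorization built from $B$; summing over diagonal blocks then handles general $B\in M_{n,mk}$, $D=\text{diag}(D_1,\dots,D_m)$. Letting $B$ range over all of $M_{n,k}$ makes $\xi$ sweep out $\bb C^{nk}$, so $f(D_n^{k-max})\subseteq\bb R^+$ holds iff $\phi^{(k)}(D)\ge 0$ for every $D\in M_k(\cl S)^+$, i.e. iff $\phi$ is $k$-positive.

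The routine part is the index-chasing that produces the vectors $\zeta_l$ and $\xi$ and checks that they range over the full space; I would present these as direct applications of the matrix--vector lemma rather than spell them out. The steps requiring genuine care are the passage from $C_n^{k-max}(\cl S)$ to $D_n^{k-max}(\cl S)$ via the Archimedean limit, and, throughout the second identity, keeping the two reshufflings of $M_n(M_k)\cong M_{nk}$ consistent: the definition of $Q_n^{k-max}(\cl S')$ forms $(f_{ij}^{(k)}(a))$ by applying the $f_{ij}$ entrywise, whereas the object emerging naturally from $f(BDB^*)$ is $\phi^{(k)}(D)$, and one must confirm that these differ only by the permutation unitary exchanging the two tensor legs $\bb C^n\otimes\bb C^k\cong\bb C^k\otimes\bb C^n$, which preserves positivity and so identifies the $k$-positivity of $\phi$ with membership in $Q_n^{k-max}(\cl S')$.
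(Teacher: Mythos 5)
Your proposal is correct and follows essentially the same route as the paper: both directions of each identity are handled by decomposing the generators $X^*GX$ and $ADA^*$ into $k$-blocks, applying the vectorization lemma $F_{A^*\phi B}(v)=\mathrm{vec}(A)^*\phi^{(p,q)}(v)\mathrm{vec}(B)$, letting the sandwiching matrix sweep out all of $\bb C^{nk}$, and using the Archimedean limit $r\to 0^+$ to pass between $D_n^{k-max}(\cl S)$ and $C_n^{k-max}(\cl S)$. Your factoring of the second identity through $(C_n^{k-max})^d=(D_n^{k-max})^d$ and your explicit remark about the canonical shuffle relating $(f_{ij}^{(k)}(a))$ to $\phi^{(k)}(a)$ are only cosmetic reorganizations of the paper's computation.
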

\begin{proof} We will show that $ \{^dQ_n^{k-min}(\cl S')\}^\infty_{n=1} $ is the super k-minimal operator system structure on $\cl S$, and $\{Q_n^{k-max}(\cl S')\}^\infty_{n=1} $ is the dual of the super k-maximal operator system structure on $\cl S$.\\\\
$(1)$  $ ^dQ_n^{k-min}(\cl S')=C_n^{k-min}(\cl S)$:\\  Let $a=(a_{ij})\in C_n^{k-min}(\cl S)$ and  $F_{X^* G X} \in Q_n^{k-min}(\cl S')$, where  $X=\left[\begin{matrix} X_1 \\ X_2 \\ \vdots \\ X_m\end{matrix}\right] \in M_{mk,n}$ with each $X_i\in M_{k,n},\,1\le i\le m$, and $G=\text{diag}(\phi_1,\dots,\phi_m)$ with $\phi_i\in CP(\cl S,M_k)$. \\ One can easily check that $X^*GX=\sum_{i=1}^mX_i^*\phi_iX_i$ and $F_{X^*GX}=\sum_{i=1}^m F_{X_i^*\phi_iX_i}$. Then
\begin{eqnarray*}F_{X^*GX}(a)& =&\sum_{i=1}^m F_{X^*_i\phi_i X_i}(a)\\
&=&\sum_{i=1}^m  \text{vec}(X_i)^*\phi_i^{(n)}(a)\text{vec}(X_i)\ge 0,\end{eqnarray*}
since $\phi_i^{(n)}(a)\ge 0 $ for all $i$. This implies $a\in {^dQ}_n^{k-min}(\cl S')$ and $C_n^{k-min}(\cl S)\subseteq {^dQ}_n^{k-min}(\cl S')$. \\Conversely, let $a=(a_{ij})\in\, ^dQ_n^{k-min}(\cl S)$ and let $\phi\in S_k(\cl S)$. Let $\Lambda =\left[\begin{matrix}\lambda_1\\\vdots\\\lambda_n\end{matrix}\right] \in \bb C^{nk}$ with $\lambda_i \in \bb C^k,$ for all $ 1\le i \le n$. Then, we have
\begin{eqnarray*}\Lambda^* \phi^{(n)}(a)\Lambda &=&\sum_{i,j=1}^n\lambda_i^*\phi(a_{ij})\lambda_j=\sum_{i,j=1}^n(\lambda_i^*\phi\lambda_j)(a_{ij})\\ &=& F_{X^*\phi X}(a) \ge 0, \end{eqnarray*}
where $X=\left[\begin{matrix}\lambda_1 &\lambda_2 &\cdots \lambda_n\end{matrix}\right]\in M_{k,n}$, $X^*\phi X\in M_n(\cl S')$ and $F_{X^*\phi X} \in Q_n^{k-min}(\cl S')$. This implies $\phi^{(n)}(a)\ge 0$ for all unital k-positive maps $\phi $ on $\cl S$, i.e. $a=(a_{ij})\in C_n^{k-min}(\cl S)$ and $^dQ_n^{k-min}(\cl S')\subseteq C_n^{k-min}(\cl S)$. Hence, we conclude that  $ ^dQ_n^{k-min}(\cl S')=C_n^{k-min}(\cl S)$.\\\\
$(2)$ $(C_n^{k-max}(\cl S))^d=Q_n^{k-max}(\cl S')$:\\ Let $F=(f_{ij})\in Q_n^{k-max}(\cl S')$ and let $A^*DA \in D_n^{k-max}(\cl S)$. Write $A=\left[\begin{matrix}A_1 \\A_2 \\\vdots \\ A_m\end{matrix}\right]\in M_{mk,n}$ where each $A_l =\left[\begin{matrix}C_1^l& C_2^l& \cdots & C_n^l\end{matrix}\right]\in M_{k,n}$ with $C_i^l$ being the $i^{th}$ column of $A_l$ for all $1\le l \le m,\, 1\le i\le n$, and $D=\text{diag}(D_1, D_2, \cdots, D_m)$ with $D_l \in M_k(\cl S)^+$. Then, we have
\begin{eqnarray*}F(A^*DA)&=&\sum_{l=1}^m F(A_l^*D_lA_l)=\sum_{l=1}^m F([(C_i^l)^* D_l(C_j^l)])\\ &=&\sum_{l=1}^m \sum_{i,j=1}^n f_{ij}((C_i^l)^* D_l(C_j^l))= \sum_{l=1}^m \sum_{i,j=1}^n (C_i^l)^*\, f_{ij}^{(k)}(D_l)\,(C_j^l)\\ &=& \sum_{l=1}^m \text{vec}(A_l)^* \underbrace{\left[f_{ij}^{(k)}(D_l)\right]}_{\ge 0}\text{vec}(A_l) \ge 0. \end{eqnarray*}
 This shows that $F(D_n^{k-max}(\cl S))\subseteq \bb R^+$.  Now, let $a=(a_{ij}) \in C_n^{k-max}(\cl S)$ such that $re_n+ a \in D_n^{k-max}(\cl S)$ for all $r>0$. Then $$rF(e_n) + F(a) =F(re_n+a) \ge 0, \text{ for all } r>0.$$ Therefore, $F(a)\ge 0$ and $F(C_n^{k-max}(\cl S))\subseteq \bb R^+$. As a result, $F\in (C_n^{k-max}(\cl S))^d$ and $Q_n^{k-max}(\cl S')\subseteq (C_n^{k-max}(\cl S))^d$. \\ Conversely, let $F=(f_{ij})\in (C_n^{k-max}(\cl S))^d$, $a\in M_k(\cl S)^+$ and $\Lambda =\left[\begin{matrix}\lambda_1 \\\lambda_2 \\\vdots \\\lambda_n \end{matrix}\right] \in \bb C^{nk}$ with each $\lambda_i \in \bb C^k,\, 1\le i\le n$. Then, we have
\begin{eqnarray*} \Lambda^*\left[f_{ij}^{(k)}(a)\right]\Lambda
&=&\sum_{i,j=1}^n \lambda_i^* \,f_{ij}^{(k)}(a)\,\lambda_j= \sum_{i,j=1}^n f_{ij}(\lambda^*_i a\lambda_j)\\ &=&F([\lambda_i^* a\lambda_j])=F(X^*aX)\ge 0,\end{eqnarray*}
where $X=\left[\begin{matrix}\lambda_1&\lambda_2 &\cdots &\lambda_n\end{matrix}\right]\in M_{k,n}$ and $XaX^* \in C_n^{k-max}(\cl S)$. Therefore $\left[f_{ij}^{(k)}(a)\right] \ge 0$ for all $a\in M_k(\cl S)^+$, i.e. $F=(f_{ij})\in Q_n^{k-max}(V')$ and $(C_n^{k-max}(V))^d \subseteq Q_n^{k-max}(\cl S') $. It follows that $(C_n^{k-max}(\cl S))^d=Q_n^{k-max}(\cl S')$.
\end{proof}

\begin{remark} Although the positive cone $Q_n^{k-min}(\cl S')$ is not weak$^*$-closed, Theorem ~\ref{duals} shows that $(C_n^{k-min}(\cl S))^d$ is the weak$^*$-closure of $Q_n^{k-min}(\cl S')$.\\ Also, note that the cone $Q_n^{k-max}(\cl S')$ is weak$^*$-closed(easy to show) and $\{ ^dQ_n^{k-max}(\cl S')\}$ is an operator system structure on $\cl S$. This implies that $ (^dQ_n^{k-max}(\cl S'))^d =Q_n^{k-max}(\cl S')$.
\end{remark}

\section{The k-Partially Entanglement Breaking Maps}\label{kpeb}

In Quantum Information Theory, there is a great interest in quantum entanglement theory~\cite{HSR, HHHH} and the objects that support this theory like entangled states, separable states, and ``entanglement breaking'' maps. There is a well-known duality between the class of entanglement breaking maps and separable states defined on tensor composite systems. Based on this theory, a lot of work has been done to generalize the well-known class of entanglement breaking maps, and introducing the classes of ``partially entanglement breaking'' maps~\cite{CK, NJ}, which are related to ``partially separable states''. In this section, we will review these generalized concepts, and relate them to our construction of super minimal and super maximal operator system structures.\\

Let $M_n$ be the full algebra of $n\times n$ matrices, $n\in \bb N$. It is clear that $M_n$ is, in fact, an AOU space. Moreover, $M_n$ is an operator system arising from the identification of $M_n$ with $B(\bb C^n)$.  For some $k\in \bb N$, let OMIN$_k(M_n)$ be the super k-minimal operator system structure on $M_n$ and OMAX$_k(M_n)$ be the super k-maximal operator system structure on $M_n$. Then, we have $$M_m(\text{OMAX}_k(M_n))^+\subseteq M_m(M_n)^+\subseteq M_m(\text{OMIN}_k(M_n))^+, \text{ for all } m\in \bb N.$$ Note that OMIN$_k(M_n)$ is just the operator system $M_n\cong B(\bb C^n)$ for all $k\ge n$. The cone of positive elements of $M_n$ for any of these operator system structures coincides with the set of all positive definite matrices in $M_n$.\\

Let $s:M_n\otimes M_m \to \bb C$ be a (quantum) state defined on the composite system $M_n\otimes M_m,$ $n,m \in \bb N$. Then $s$ is called {\bf separable} if it is a convex combination of tensor states $$s=\sum_ir_i s_i\otimes t_i,$$ where $s_i:M_n\to \bb C$ and $t_i:M_m\to \bb C$ are states on the component systems, and $r_i \ge 0$ with $\sum_ir_i=1$. States that are not separable are said to be {\bf entangled}.\\

A state $s:M_n\otimes M_m \to \bb C$ can be represented by a positive semi-definite self-adjoint matrix operator of trace one, called a {\bf density matrix}. One commonly denotes density matrices with lowercase Greek letters such as $\rho, \xi, \sigma$. The density matrix of a quantum state $s:M_n\otimes M_m\to \bb C$ is in fact {\bf the Choi matrix} of the linear functional $s$, defined by $\rho_s=(s(E_{ij}\otimes E_{kl}))$, where $\{E_{ij}\}_{i,j=1}^n$ and $\{E_{kl}\}_{k,l=1}^m$ are the canonical matrix units for $M_n$ and $M_m$, respectively. This association of $s$ with its Choi matrix $\rho_s$ is an isomorphism known as Choi-Jamiolkowski isomorphism~\cite{MDC, AJ}. Being a positive-definite matrix, the density matrix $\rho_s$ can be written as a sum of rank one positive semi-definite matrices $\rho_s=\sum_{l=1}^p U_l U_l^*$, where $U_l \in \bb C^n\otimes \bb C^m.$ \\

We will classify quantum states according to their level of entanglement or separability. To measure the level of entanglement or separability in quantum states, we need to know the Schmidt number of the density matrix for the given state. \\

\noindent{\bf The Schmidt Number of a Density Matrix}
\begin{thm}\label{SDT}\cite[Schmidt Decomposition Theorem]{NC} Let $M_n$ and $M_m$ be Hilbert spaces of dimensions $n$ and $m$ respectively. For any vector $U$ in the tensor product $M_n\otimes M_m$, there exist orthonormal sets $\{u_1,u_2,\dots, u_k\} \subseteq M_n$ and $\{v_1,v_2,\dots, v_k\} \subseteq M_m$ for $k=\min(n,m)$, such that $$v=\sum_{i=1}^k\alpha_i u_i\otimes v_i, \text{ for some nonnegative real numbers }\alpha_i \ge 0. \qquad (\star)$$
\end{thm}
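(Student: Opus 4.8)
The plan is to reduce the statement to the singular value decomposition (SVD) of a rectangular matrix, a classical fact which I may invoke from matrix theory~\cite{HJ}. Fix orthonormal bases $\{e_i\}_{i=1}^n$ of $\bb C^n$ and $\{f_j\}_{j=1}^m$ of $\bb C^m$, and expand the given vector as $U=\sum_{i,j}c_{ij}\,e_i\otimes f_j$. This assembles the coefficient matrix $C=(c_{ij})\in M_{n,m}$; conversely $C$ determines $U$, so the tensor $U$ and the matrix $C$ carry exactly the same data. The entire content of the theorem is that, after suitable unitary changes of basis on each factor, $C$ becomes diagonal with nonnegative entries, which is precisely what SVD provides.

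First I would apply the SVD to write $C=P\Sigma Q^*$, where $P\in M_n$ and $Q\in M_m$ are unitary and $\Sigma\in M_{n,m}$ has entries $\Sigma_{\ell\ell}=\alpha_\ell\ge 0$ for $1\le \ell\le k=\min(n,m)$ and zeros off the diagonal. Reading off the matrix entries gives $c_{ij}=\sum_{\ell=1}^k P_{i\ell}\,\alpha_\ell\,\overline{Q_{j\ell}}$. Next I would define the candidate Schmidt vectors $u_\ell=\sum_i P_{i\ell}e_i\in\bb C^n$ and $v_\ell=\sum_j \overline{Q_{j\ell}}f_j\in\bb C^m$ for $1\le \ell\le k$, namely the columns of $P$ and of $\overline Q$, respectively. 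Substituting back, I would verify
\begin{eqnarray*}
U=\sum_{i,j}c_{ij}\,e_i\otimes f_j
=\sum_{\ell=1}^k\alpha_\ell\Big(\sum_i P_{i\ell}e_i\Big)\otimes\Big(\sum_j\overline{Q_{j\ell}}f_j\Big)
=\sum_{\ell=1}^k\alpha_\ell\,u_\ell\otimes v_\ell,
\end{eqnarray*}
which is the desired expansion $(\star)$.

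It then remains to check orthonormality. Since $P$ is unitary, its columns $\{u_\ell\}_{\ell=1}^n$ form an orthonormal set in $\bb C^n$; and since the entrywise conjugate of a unitary is again unitary, $\overline Q$ is unitary, so its columns $\{v_\ell\}_{\ell=1}^m$ form an orthonormal set in $\bb C^m$. Restricting to $\ell\le k$ yields the two orthonormal sets required, and the nonnegativity $\alpha_\ell\ge 0$ is built into the SVD. I do not expect a genuine obstacle here, as the result is classical; the only care needed is bookkeeping, keeping the complex conjugations consistent so that both $\{u_\ell\}$ and $\{v_\ell\}$ come out orthonormal, and treating the non-square case $n\ne m$ correctly so that exactly $k=\min(n,m)$ terms appear.

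An alternative route that avoids quoting SVD directly is spectral in nature: form the positive operator $\rho=\mathrm{Tr}_m\big(|U\rangle\langle U|\big)$ on $\bb C^n$, diagonalize it by the spectral theorem to obtain the $u_\ell$ as its eigenvectors with eigenvalues $\alpha_\ell^2$, and then define $v_\ell$ through $\alpha_\ell v_\ell=(\langle u_\ell|\otimes I)U$, deducing orthogonality of the $v_\ell$ from the orthogonality of the eigenspaces of $\rho$. This is really the same computation with SVD replaced by the spectral theorem applied to $CC^*$, so I would present the SVD argument as the primary one.
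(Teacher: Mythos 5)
Your proof is correct and follows essentially the same route the paper indicates: the paper does not give a detailed proof but explicitly sketches exactly this argument, identifying the tensor $U$ with a matrix $A_u\in M_{n,m}$ via the correspondence $u\otimes v\mapsto uv^*$ and invoking the singular value decomposition. Your write-up simply carries out that sketch in coordinates (with the conjugations handled correctly), so there is nothing to add.
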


 The Schmidt Decomposition Theorem is a basic tool in quantum information theory. It is essentially the restatement of the Singular Value Decomposition in disguise. The standard proof of this theorem works by noticing that there is a linear isomorphism between $\bb C^n\otimes \bb C^m$ and $M_{n,m}$ given by associating a vector $u_e\otimes v_e\in \bb C^n\otimes \bb C^m$ with the matrix $u_ev_e^*\in M_{n,m}$ and extending linearly. We will denote the matrix associated to the vector $U$ by $A_u$. Applying the Singular Value Decomposition to $A_u$ gives the Schmidt Decomposition of $U$.\\
\indent In the Schmidt Decomposition \eqref{SDT} of $U$, the least number of terms required in the summation ($\star$) is known as the {\bf Schmidt rank} of $U$. One can realize that, the Schmidt rank of $U$ is equal to the number of non-zero singular values of the matrix $A_u$ associated to $U$, i.e. the rank of $A_u$. In a similar way, the nonnegative real constants $\alpha_e$'s are exactly the singular values of $A_u$, and they are often called the {\bf Schmidt coefficients}.\\

\indent Furthermore, since each $u_ev_e^* \in M_{n,m}$ has rank 1, we see that even if we remove the requirement that the sets above be orthonormal, it is impossible to write $U$ as the sum of fewer elementary tensors. To summarize, any vector $U\in \bb C^n\otimes \bb C^m$ can be written as $$U=\sum_{e=1}^k (u_e \otimes v_e),$$  for some sets of vectors $\{u_1,u_2,\dots,u_k\} \subseteq \bb C^n$ and $\{v_1,v_2,\dots,v_k\} \subseteq \bb C^m$ for $k\le \min(n,m)$. And any rank one positive semi-definite  matrix $UU^*$ can therefore be written as
$$UU^*=\sum_{e=1}^k (u_e\otimes v_e)\sum_{f=1}^k (u_f\otimes v_f)^*=\sum_{e,f=1}^k (u_eu_f^*\otimes v_ev_f^*).$$ In other words, given a vector $U$ of Schmidt rank at most k, we have $$UU^* \in \bigg\{\sum_{e,f=1}^k (u_eu_f^*\otimes v_ev_f^*):\{u_1,u_2,\dots,u_k\} \subseteq \bb C^n,\,\{v_1,v_2,\dots,v_k\} \subseteq \bb C^m\bigg\}. $$

\indent Let $s:M_n\otimes M_n\to\bb C$ be a quantum state and let $\rho_s$ be its density matrix. If the density matrix $\rho_s$ of the given state $s:M_n\otimes M_m\to \bb C$ is a finite sum of rank one positive semi-definite matrices $UU^*$ with $U \in \bb C^n\otimes \bb C^m$ of Schmidt rank at most k with $k\le \min(n,m)$, then the least such number k is called the {\bf Schmidt number}~\cite{TH} of $\rho_s$.\\

The Schmidt number of a density matrix tells us the ``level of entanglement or separability'' of the state. A state $s:M_n\otimes M_m\to \bb C$ is called {\bf maximally entangled} if the Schmidt number of its density matrix is $\min(n,m)$. Also, note that separable states are represented by density matrices of the form $\rho=\sum_j \sigma_j \otimes \tau_j$, where each $\sigma_j=\sum_eu^j_e(u^j_e)^*\ge 0,\,\tau_j=\sum_fv_f^j(v_f^j)^*\ge 0$. These are exactly the density matrices, whose Schmidt numbers are equal to $1$.\\

\indent A state $s:M_n\otimes M_m \to \bb C$ is called {\bf k-separable}~\cite{HHHH, TH} if the Schmidt number of its density matrix $\rho_s$ is at most k with $k\le \min(n,m)$. The quantum channels that carry any quantum states into k-separable states, are called {\bf k-partially entanglement breaking channels}.\\

\indent A nonzero positive linear functional $f:M_n\otimes M_m \to \bb C$ is called k-separable if and only if $\dfrac{f}{f(I_n\otimes I_m)}$ is a k-separable state. If $s:M_n\otimes M_m \to \bb C$ is a positive linear functional, then $s\circ \phi^{(n)}:M_n\otimes M_p \to \bb C$ is positive linear functional. If $s$ is a state and $\phi$ is unital, then $s\circ \phi^{(n)}$ is a state.

\begin{defn} A linear map $\phi:M_p \to M_m$ is called {\bf k-partially entanglement breaking} (k-PEB), if $s\circ \phi^{(n)} : M_n\otimes M_p \to \bb C$ is a k-separable state for every state $s:M_n\otimes M_m \to \bb C$, for all $n\in \bb N$.
\end{defn}

 In this section, we relate k-partially entanglement breaking maps to the k-minimal and the k-maximal operator system structures studied in the previous section. We begin with a characterization of k-separable states.

\begin{prop}\label{ksep} Let $f:M_n\otimes M_m \to \bb C$ be a  positive linear functional. Then $f$ is k-separable if and only if $f: M_n(\emph{OMIN}_k(M_m))\to \bb C$ is positive.\end{prop}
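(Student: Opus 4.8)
The plan is to reduce the positivity of $f$ over $M_n(\mathrm{OMIN}_k(M_m))$ to a Schmidt-number condition on its density (Choi) matrix $\rho_f=(f(E_{ij}\otimes E_{kl}))$, and then read off $k$-separability. The first step is to make the positive cone $C_n^{k-min}(M_m)$ explicit. Since every map in sight has target $M_k$, Proposition~\ref{kpos} shows that $k$-positivity and complete positivity coincide for maps $M_m\to M_k$, so, using the equivalent description of the super $k$-minimal structure in terms of all $k$-positive maps $M_m\to M_k$, one may write
$$
C_n^{k-min}(M_m)=\big\{A\in M_n(M_m):(\mathrm{id}_n\otimes\phi)(A)\ge 0\ \text{for every completely positive }\phi\colon M_m\to M_k\big\}.
$$
I would then identify this with the cone of \emph{$k$-block-positive} matrices, those $A\in M_n\otimes M_m$ satisfying $\langle A\psi,\psi\rangle\ge 0$ for every $\psi\in\bb{C}^n\otimes\bb{C}^m$ of Schmidt rank at most $k$.

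To prove that identification, I would test $A$ against the elementary completely positive maps $\phi(x)=W^{*}xW$ with $W\in M_{m,k}$, for which $(\mathrm{id}_n\otimes\phi)(A)=(I_n\otimes W^{*})A(I_n\otimes W)$; its positivity amounts to $\langle A(I_n\otimes W)\eta,(I_n\otimes W)\eta\rangle\ge 0$ for all $\eta\in\bb{C}^n\otimes\bb{C}^k$. The decisive point is that, as $W$ and $\eta$ range over $M_{m,k}$ and $\bb{C}^n\otimes\bb{C}^k$, the vectors $(I_n\otimes W)\eta$ are precisely the vectors of $\bb{C}^n\otimes\bb{C}^m$ of Schmidt rank at most $k$ (this is immediate from the Schmidt decomposition recalled before Theorem~\ref{SDT}). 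Hence positivity against these compressions already forces $k$-block positivity; conversely, writing a general completely positive $\phi$ in Kraus form $\phi(x)=\sum_r C_r xC_r^{*}$ with $C_r\in M_{k,m}$ and noting that the local operators $I_n\otimes C_r^{*}$ cannot raise Schmidt rank shows that $k$-block positivity forces $(\mathrm{id}_n\otimes\phi)(A)\ge 0$ for every such $\phi$. This gives $C_n^{k-min}(M_m)$ equal to the $k$-block-positive cone.

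The third step is the duality between this cone and the density matrices. Expanding $f$ in the matrix units gives $f(A)=\sum (\rho_f)_{(ik)(jl)}A_{(ik)(jl)}$, so if $f$ is $k$-separable and $\rho_f=\sum_l U_lU_l^{*}$ with each $U_l$ of Schmidt rank at most $k$, a short computation yields $f(A)=\sum_l\langle A\,\overline{U_l},\overline{U_l}\rangle$, where $\overline{U_l}$ is the entrywise conjugate and has the same Schmidt rank as $U_l$. For a $k$-block-positive $A$ every term is nonnegative, so $f$ is positive on $C_n^{k-min}(M_m)$; this settles the easy implication. For the converse I would invoke the bipolar theorem in the finite-dimensional space $M_n\otimes M_m$: the Schmidt-number-$\le k$ matrices form the closed conic hull of the rank-one projections onto Schmidt rank $\le k$ vectors, and the computation above identifies the cone dual to them (under the pairing realizing $f(A)$) with exactly the $k$-block-positive cone. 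Positivity of $f$ on $C_n^{k-min}(M_m)$ therefore places $\rho_f$ in the Schmidt-$\le k$ cone, i.e. makes $f$ $k$-separable. The same conclusion is available internally from Theorem~\ref{duals}, since $(C_n^{k-min}(M_m))^d$ is the weak$^{*}$-closure of $Q_n^{k-min}(M_m')$ and each generator $F_{X^{*}GX}$ evaluates, by the formula in the proof of Theorem~\ref{duals}, as $A\mapsto\sum_{i,r}\langle A\,w_{ir},w_{ir}\rangle$ with every $w_{ir}$ of Schmidt rank $\le k$.

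I expect the duality of the third step to be the crux, and in particular the reverse implication: that a functional positive on \emph{all} $k$-block-positive matrices must have a density matrix of Schmidt number at most $k$. The easy direction and the block-positivity identification are essentially bookkeeping, but they hide one point that must be handled with care, namely the Choi-matrix conventions and the fact that Schmidt number is invariant under transpose and complex conjugation; a slip there would interchange the two dual cones. The substantive input is the closedness of the Schmidt-$\le k$ cone in finite dimensions (equivalently, the in-house Theorem~\ref{duals}), without which the converse would fail to follow from mere positivity.
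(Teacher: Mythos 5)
Your proof is correct, and it reaches the paper's conclusion by a genuinely different (though dual) organization of the same underlying facts. The paper stays on the dual side throughout: by Theorem~\ref{duals}, $(C_n^{k-min}(M_m))^d$ is the weak$^*$-closure of $Q_n^{k-min}$, and the proof is an explicit Choi-matrix computation showing that the generators $\Lambda\phi\Lambda^*$ of $Q_n^{k-min}$ (with $\phi:M_m\to M_k$ completely positive, written in Kraus form) are exactly the $k$-separable functionals, followed by the remark that passing to the weak$^*$-closure changes nothing because the $k$-separable states form a compact convex set. You instead characterize the primal cone: $C_n^{k-min}(M_m)$ is the cone of $k$-block-positive matrices --- your verification of this via the maps $W^{*}xW$ and the identification of the vectors $(I_n\otimes W)\eta$ with the Schmidt-rank-$\le k$ vectors is sound, and the reduction to such elementary maps via Kraus decompositions is legitimate --- and then you dualize against the Schmidt-number-$\le k$ cone with the finite-dimensional bipolar theorem. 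The closedness of the Schmidt-$\le k$ cone that you flag as the substantive input is precisely the compactness fact the paper invokes in its last sentence (Carath\'eodory plus compactness of the normalized Schmidt-rank-$\le k$ projections), so neither route gets the converse for free. What your packaging buys is a cleaner logical structure for that converse: the paper's proof first treats a general element of the weak$^*$-closure ``without loss of generality'' as an actual element of $Q_n^{k-min}$ and only afterwards appeals to compactness, whereas the bipolar argument handles closure uniformly; it also makes the statement coincide visibly with the standard Terhal--Horodecki duality between $k$-block-positive operators and Schmidt number. Your caution about the conjugation conventions in the pairing $f(A)=\sum_l\langle A\,\overline{U_l},\overline{U_l}\rangle$ is warranted and correctly resolved, since Schmidt rank is invariant under entrywise conjugation.
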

\begin{proof} Given a positive linear functional $f:M_n\otimes M_m\to \bb C$ with $f(I_n\otimes I_m)\ne 0$, then $\dfrac{f}{f(I_n\otimes I_m)}:M_n\otimes M_m\to \bb C$ becomes a state. Hence, we may assume $f$ is a k-separable state, $k\le \min(n,m)$. Assume that the density matrix of f is $$\rho_f =\sum_{e,f=1}^ku_eu_f^*\otimes v_ev_f^* ,$$ for some $\{u_1,u_2,\dots,u_k\} \subseteq \bb C^n$ and  $\{v_1,v_2,\dots,v_k\} \subseteq \bb C^m$. \\ Define $\phi_{ef}: M_m \to \bb C$ by $$\phi_{ef}(x)=\bar v_e^*(x)\bar v_f,\,\text{ for all } x\in M_m.$$ It is obvious that $\phi_{ef}$ is a well defined linear map on $M_m$. Note that the ``density matrix'' for each $\phi_{ef}$ is $$\rho_{ef} =\left[\phi_{ef}(E_{kl})\right]_{k,l=1}^m = v_ev_f^*.$$ Now, look at $\phi=\left[\phi_{ef}\right]:M_m\to M_k$ given by
\begin{eqnarray*}\phi(x)=\left[\phi_{ef}(x)\right]&=&\left[\bar v_e^*(x)\bar v_f\right]\\ &=&\left[\begin{matrix}\bar v_1^* \\ \vdots \\\bar v_k^*\end{matrix}\right]x\underbrace{\left[\begin{matrix} \bar v_1 & \cdots & \bar v_k\end{matrix}\right]}_{= A\in M_{m,k}}\\ &=& A^*xA.\end{eqnarray*}
Then, one can easily verify that $\phi$ is a completely positive map on $M_m$. Hence, we can write each function $f$ as $$f=\sum_{e,f=1}^k u_eu_f^*\otimes \phi_{ef}=\left[\begin{matrix}u_1 & \cdots & u_k\end{matrix}\right] \phi \left[\begin{matrix}u_1^* \\ \vdots \\ u_k^*\end{matrix}\right].$$ This shows that $f\in {Q}_n^{k-min}(M_m)$, i.e. $f\in (C_n^{k-min}(M_m))^d$.\\So, $f$ is positive on $M_n(\text{OMIN}_k(M_m))$.\\Conversely, assume  $f: M_n(\text{OMIN}_k(M_m))\to \bb C$ is positive , i.e. $f\in (C_n^{k-min}(M_m))^d =\overline{Q_n^{k-min}(M_m)}^{w^*}$. Without loss of generality, let $f=\Lambda \phi\Lambda^* \in {^sQ}_n^{k-min}(M_m)$, where $\Lambda=\left[\begin{matrix}u_1 & u_2 & \cdots u_k\end{matrix}\right]\in M_{n,k}$ and $\phi=[\phi_{ef}]:M_m\to M_k$ is completely positive. Then $f=\sum_{e,f=1}^ku_eu_f^* \otimes \phi_{ef}$. Since $\phi$ is completely positive, then $\phi(x)=\sum_{i=1}^l A_i^*xA_i$, for some Kraus operators $\{A_i\}\subseteq M_{m,k}$. Writing each $A_i =\left[\begin{matrix}\bar v_1^i & \bar v_2^i & \cdots & \bar v_k^i\end{matrix}\right]$, where each $\bar v_e^i \in \bb C^m$, then one can see that $\phi_{ef}(x) = \sum_{i=1}^l(\bar v_e^i)^*x(\bar v_f^i)$, and its density matrix $\rho_{\phi_{ef}} = \sum_{i=1}^l v_e^i (v_f^i)^*$. Hence, the density matrix for the function $f$ will be $$\rho_f=\sum_{i=1}^l\sum_{e,f=1}^ku_eu_f^*\otimes v_e^i (v_f^i)^*. $$ This shows that f is a k-separable map.\\
In general, any positive linear functional $f\in (C_n^{k-min}(M_m))^d$ (which becomes a state by dividing by its norm) is a weak$^*$-limit of k-separable states. Such a limit exists, because k-separable states are the convex hull of a compact set, which is a compact set by Caratheodory's theorem.
\end{proof}

We now turn our attention to a duality result. Recall that the dual of a
matrix ordered space is again a matrix ordered space. Let $\delta_{i,j}: M_n\to \bb C$ be the linear functional satisfying
$$\delta_{i,j}(E_{kl}) =\begin{cases} 1 & \text{when }(i,j)=(k,l)\\
0 & \text{when }(i,j)\ne (k,l)\end{cases}$$
and let $\gamma_n: M_n\to M_n'$ be the linear isomorphism defined by $\gamma_n(E_{i,j})=\delta_{i,j}$. The next result is certainly in some sense known, but the formal statement will be useful for us in the sequel.

\begin{thm}\cite[Theorem 6.2]{PTT} The map $\gamma_n : M_n\to M_n'$ is a complete order isomorphism of matrix ordered spaces. Consequently, $(M_n', (M_n')^+, tr)$ is an AOU space that is order isomorphic to $(M_n,M_n^+, I_n)$, where $I_n$ denotes the identity matrix.
\end{thm}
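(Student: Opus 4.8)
The plan is to read off an explicit formula for $\gamma_n$ and its matricial amplifications, and then to recognize the resulting correspondence as nothing but Choi's theorem on completely positive maps. First I would record that, for $A=(a_{ij})$ and $B=(b_{ij})$ in $M_n$, the definition $\gamma_n(E_{ij})=\delta_{ij}$ gives $\gamma_n(A)(B)=\sum_{i,j}a_{ij}b_{ij}=\operatorname{tr}(AB^t)$. From this one sees immediately that $\gamma_n$ is a linear bijection, since it sends the basis $\{E_{ij}\}$ of $M_n$ to the basis $\{\delta_{ij}\}$ of $M_n'$; that it is a $*$-map, since a short computation using $f^*(v)=\overline{f(v^*)}$ yields $\gamma_n(A^*)=\gamma_n(A)^*$; and that $\gamma_n(I_n)=\operatorname{tr}$, so that $\gamma_n$ carries the order unit of $M_n$ to the distinguished unit of $M_n'$.

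The heart of the proof is to show that $\gamma_n^{(p)}\big(M_p(M_n)^+\big)=M_p(M_n')^+$ for every $p\in\bb N$. Fix $X=(A_{st})_{s,t=1}^p\in M_p(M_n)$. Under the identification of $M_p(M_n')$ with linear maps from $M_n$ into $M_p$, whereby an element $(\phi_{st})$ corresponds to the map $a\mapsto(\phi_{st}(a))$, the amplification $\gamma_n^{(p)}(X)=(\gamma_n(A_{st}))_{s,t}$ corresponds to the map $\Phi_X\colon M_n\to M_p$ determined by $\Phi_X(E_{ij})_{st}=(A_{st})_{ij}$. I would then invoke the identification $M_p(M_n')^+\cong CP(M_n,M_p)$ recorded earlier in the excerpt, which says precisely that $\gamma_n^{(p)}(X)\in M_p(M_n')^+$ if and only if $\Phi_X$ is completely positive.

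It remains to connect the positivity of $X$ to the complete positivity of $\Phi_X$. The Choi matrix $C_{\Phi_X}=\sum_{i,j}E_{ij}\otimes\Phi_X(E_{ij})\in M_n\otimes M_p$ has $(i,s),(j,t)$ entry equal to $(A_{st})_{ij}$, whereas $X\in M_p\otimes M_n$ has $(s,i),(t,j)$ entry equal to $(A_{st})_{ij}$. Writing $W\colon\bb C^n\otimes\bb C^p\to\bb C^p\otimes\bb C^n$ for the flip $W(e_i\otimes f_s)=f_s\otimes e_i$, one checks directly that $C_{\Phi_X}=W^*XW$. Since $W$ is unitary, conjugation by it preserves positivity, so $X\ge 0$ if and only if $C_{\Phi_X}\ge 0$, and by Choi's theorem the latter is equivalent to $\Phi_X$ being completely positive. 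Chaining the equivalences gives $X\in M_p(M_n)^+\iff\gamma_n^{(p)}(X)\in M_p(M_n')^+$, and because $\gamma_n^{(p)}$ is a bijection this shows that both $\gamma_n$ and $\gamma_n^{-1}$ are completely positive, i.e. $\gamma_n$ is a complete order isomorphism. The ``consequently'' clause is then automatic: transporting the AOU structure of $(M_n,M_n^+,I_n)$ through the first level of this isomorphism, together with $\gamma_n(I_n)=\operatorname{tr}$, exhibits $(M_n',(M_n')^+,\operatorname{tr})$ as an AOU space order isomorphic to $(M_n,M_n^+,I_n)$.

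I would flag the index-shuffle identity $C_{\Phi_X}=W^*XW$ as the one delicate point. The main obstacle is scrupulous bookkeeping of the transpose and conjugate conventions hidden in the pairing $\gamma_n(A)(B)=\operatorname{tr}(AB^t)$ and in the definition of the Choi matrix, so that the correspondence lands on genuinely completely positive maps rather than on transpose-completely-positive ones, for which $\gamma_n$ would fail to be a complete order isomorphism.
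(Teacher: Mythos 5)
The paper does not actually prove this statement: it is imported verbatim as \cite[Theorem 6.2]{PTT} and used as a black box, so there is no internal proof to compare yours against. Your argument is correct and is essentially the standard one. The formula $\gamma_n(A)(B)=\operatorname{tr}(AB^t)=\sum_{i,j}a_{ij}b_{ij}$, the unitality and $*$-compatibility checks, the identification of $\gamma_n^{(p)}(X)$ with the map $\Phi_X$ determined by $\Phi_X(E_{ij})_{st}=(A_{st})_{ij}$, and the observation that the Choi matrix of $\Phi_X$ is the canonical shuffle $W^*XW$ of $X$ are all accurate; since $W$ is unitary, Choi's theorem converts positivity of $X$ into complete positivity of $\Phi_X$, which via the identification $M_p(M_n')^+\cong CP(M_n,M_p)$ (recorded in Section~\ref{mat} of the paper) is exactly membership of $\gamma_n^{(p)}(X)$ in the dual cone. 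The transpose worry you flag at the end is resolved correctly by your own bookkeeping: because the index pairs $(s,t)$ and $(i,j)$ occur in the same order in both $X$ and $C_{\Phi_X}$, with only the grouping exchanged, the correspondence lands on genuinely completely positive maps --- a quick sanity check is the swap matrix $\sum_{s,t}E_{st}\otimes E_{ts}$, which is not positive and whose associated map is the non-completely-positive transpose, consistent with your equivalences. The only dependency worth acknowledging is that the step $M_p(M_n')^+\cong CP(M_n,M_p)$ is itself quoted from the dual-cone formalism rather than reproved, but the paper explicitly records it, so this is a legitimate citation rather than a gap.
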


\begin{prop}\label{duality} The complete order isomorphism $\gamma_n : M_n\to M_n'$ gives rise to the identifications $\emph{OMIN}_k(M_n)'=\emph{OMAX}_k(M_n')=\emph{OMAX}_k(M_n)$ and $\emph{OMAX}_k(M_n)'=\emph{OMIN}_k(M_n')=\emph{OMIN}_k(M_n)$.
\end{prop}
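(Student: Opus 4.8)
The plan is to reduce the three–term identities to a single genuine duality and to obtain the remaining equalities by transporting structures along $\gamma_n$. Since the constructions $\mathrm{OMIN}_k$ and $\mathrm{OMAX}_k$ are built only from the matrix cones $\{M_j(\cdot)^+\}$ and the Archimedean matrix order unit of the underlying operator system, they are functorial with respect to unital complete order isomorphisms. As $\gamma_n:M_n\to M_n'$ is such a map (it sends $I_n$ to $tr$ and carries $M_N(M_n)^+$ onto $M_N(M_n')^+$ at every level $N$), it induces complete order isomorphisms $\mathrm{OMIN}_k(M_n)\cong\mathrm{OMIN}_k(M_n')$ and $\mathrm{OMAX}_k(M_n)\cong\mathrm{OMAX}_k(M_n')$; this already accounts for the equalities $\mathrm{OMAX}_k(M_n')=\mathrm{OMAX}_k(M_n)$ and $\mathrm{OMIN}_k(M_n')=\mathrm{OMIN}_k(M_n)$, and identifies $M_n''$ with $M_n$. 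It therefore remains to prove the single duality $\mathrm{OMIN}_k(\cl S)'=\mathrm{OMAX}_k(\cl S')$ for a finite-dimensional operator system $\cl S$, which I will then apply with $\cl S=M_n$ and with $\cl S=M_n'$.

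For this core identity, i.e. $(C_N^{k-min}(\cl S))^d=C_N^{k-max}(\cl S')$ at each level $N$, the key observation is that the generating cone of the dual of $\mathrm{OMIN}_k$ is exactly the pre-Archimedean super $k$-maximal cone of the dual operator system. Comparing the definitions from Section~\ref{mat}, a generator $F_{X^*GX}\in Q_N^{k-min}(\cl S')$, with $X\in M_{mk,N}$ stacked as $X=[X_1;\dots;X_m]$ and $G=\mathrm{diag}(\phi_1,\dots,\phi_m)$, corresponds under the matrix--functional and vectorization identifications to $X^*GX=\sum_i X_i^*\phi_i X_i=ADA^*$, where $A=[\,X_1^*\mid\cdots\mid X_m^*\,]\in M_{N,mk}$ and $D=\mathrm{diag}(\phi_1,\dots,\phi_m)$ with each $\phi_i\in M_k(\cl S')^+\cong CP(\cl S,M_k)$. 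Since $A=X^*$ ranges over $M_{N,mk}$ exactly as $X$ ranges over $M_{mk,N}$, this shows $Q_N^{k-min}(\cl S')=D_N^{k-max}(\cl S')$ as cones in $M_N(\cl S')$. Passing to weak$^*$-closures and using that in finite dimensions the Archimedeanization of a cone with order unit $e_N$ equals its topological closure gives $\overline{Q_N^{k-min}(\cl S')}^{\,w^*}=\overline{D_N^{k-max}(\cl S')}^{\,w^*}=C_N^{k-max}(\cl S')$. Because the Remark following Theorem~\ref{duals} identifies $(C_N^{k-min}(\cl S))^d$ with $\overline{Q_N^{k-min}(\cl S')}^{\,w^*}$, I conclude $\mathrm{OMIN}_k(\cl S)'=\mathrm{OMAX}_k(\cl S')$.

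The companion identity $\mathrm{OMAX}_k(\cl S)'=\mathrm{OMIN}_k(\cl S')$ I would obtain by reflexivity rather than a second direct computation. Applying the identity just proved to the finite-dimensional operator system $\cl S'$ yields $\mathrm{OMIN}_k(\cl S')'=\mathrm{OMAX}_k(\cl S'')=\mathrm{OMAX}_k(\cl S)$, using $\cl S''\cong\cl S$. Dualizing both sides and invoking the reflexivity of a finite-dimensional operator system (the inclusion-reversing correspondence of \cite[Theorem~4.3]{PTT} is involutive once every cone is weak$^*$-closed, which is automatic in finite dimensions) gives $\mathrm{OMAX}_k(\cl S)'=\mathrm{OMIN}_k(\cl S')''=\mathrm{OMIN}_k(\cl S')$. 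Specializing both duality statements to $\cl S=M_n$ and combining them with the transport along $\gamma_n$ from the first paragraph produces precisely the two chains in the proposition.

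The step I expect to be the main obstacle is the bookkeeping of the second paragraph: verifying carefully that the functional $F_{X^*GX}$ matches the matrix $ADA^*$ under the matrix--functional and vectorization identifications, and that the passage to the weak$^*$-closure is legitimate, i.e. that the Archimedeanization defining $C_N^{k-max}(\cl S')$ coincides with the closure of $D_N^{k-max}(\cl S')$. The closure identity requires $e_N$ to lie in the interior of the cone, which holds because $e_N$ is a matrix order unit for $\cl D^{k-max}(\cl S')$ by Proposition~\ref{kmaxo} and the ambient space is finite-dimensional; I would isolate this as a short lemma (for a full cone $D$ with order unit $e_N$ one has $\{A:\,re_N+A\in D\text{ for all }r>0\}=\overline{D}$) so that the closure manipulation is clean and the two inclusions of the main identity are transparent.
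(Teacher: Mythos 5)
Your argument is correct, and its engine is the same as the paper's: both proofs rest on the observation that $Q_m^{k-min}(M_n')$ and $D_m^{k-max}(M_n')$ are literally the same cone in $M_m(M_n')$ (via $A=X^*$ and the identification $M_k(\cl S')^+\cong CP(\cl S,M_k)$), followed by a closure argument and transport along $\gamma_n$. You deviate in two minor but defensible ways. First, where the paper invokes the Krein--Shmulian theorem to show that $D_m^{k-max}(M_n')$ is already weak$^*$-closed, so that the Archimedeanization does nothing to it and $(C_m^{k-min}(M_n))^d=\overline{Q_m^{k-min}}^{w^*}=Q_m^{k-min}=D_m^{k-max}=C_m^{k-max}$, you instead prove the small lemma that in finite dimensions the Archimedeanization of a full cone with order unit equals its topological closure and then push everything through closures; this is arguably cleaner, since you never need $D_m^{k-max}$ or $Q_m^{k-min}$ to be closed, only the Remark following Theorem~\ref{duals}. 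Second, for the chain $\mathrm{OMAX}_k(M_n)'=\mathrm{OMIN}_k(M_n)$ the paper asserts the proof is ``similar'' (a direct computation with $Q_m^{k-max}$ and Theorem~\ref{duals}), whereas you derive it from the first chain by dualizing once more and invoking the involutivity of the correspondence $P_n\mapsto P_n^d$, $Q_n\mapsto{}^dQ_n$ on weak$^*$-closed cones; this trades a computation for the bipolar-type bookkeeping you correctly flag, and both routes are sound in this finite-dimensional setting.
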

\begin{proof} Let $\cl S=M_n'$, then one can observe that ${Q}_m^{k-min}(M_n)={D}_m^{k-max}(\cl S)$ by definitions of each cone. The unit ball of ${D}_m^{k-max}(\cl S)$ is compact, therefore ${D}_m^{k-max}(\cl S)$ is closed by the Krein-Shmulian Theorem. Hence, ${D}_m^{k-max}(\cl S)={C}_m^{k-max}(\cl S)$. Thus, we have that ${Q}_m^{k-min}(M_n)={C}_m^{k-max}(\cl S)$, i.e.\\ $M_m(\text{OMIN}_k(M_n)')^+=M_m(\text{OMAX}_k(M_n'))^+$, and so the identity map on $M_n'$ yields a complete order isometry between the matrix ordered space OMIN$_k(M_n)'$ and the operator system OMAX$_k(M_n')$. Finally, the complete order isomorphism $\gamma_n$ allows for the identification, OMAX$_k(M_n')=\text{OMAX}_k(M_n)$. The proof of the rest of the statement is similar.
\end{proof}

\begin{thm}\label{kpebmin} Let $\phi:M_p\to M_m$ be a linear map. Then $\phi$ is a k-partially entanglement breaking map if and only if $\phi:\emph{OMIN}_k(M_p)\to M_m$ is completely positive.
\end{thm}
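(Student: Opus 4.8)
The plan is to route both directions through Proposition~\ref{ksep}, which converts $k$-separability of a positive functional into positivity over the super $k$-minimal operator system, together with the standard fact that a self-adjoint $T\in M_n(M_m)\cong M_{nm}$ satisfies $T\ge 0$ if and only if $s(T)\ge 0$ for every state $s$ on $M_n\otimes M_m$. I will use throughout that $M_n(\text{OMIN}_k(M_p))^+=C_n^{k-min}(M_p)$, that complete positivity of $\phi:\text{OMIN}_k(M_p)\to M_m$ means precisely $\phi^{(n)}\big(C_n^{k-min}(M_p)\big)\subseteq M_n(M_m)^+$ for every $n\in\bb N$, and that $M_n(M_p)^+\subseteq C_n^{k-min}(M_p)$. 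Proposition~\ref{ksep}, though stated for $M_n\otimes M_m$, applies verbatim with $M_p$ in place of $M_m$.

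For the forward implication I would assume $\phi:\text{OMIN}_k(M_p)\to M_m$ completely positive, fix $n\in\bb N$ and a state $s:M_n\otimes M_m\to\bb C$, and observe that for every $A\in C_n^{k-min}(M_p)$ one has $\phi^{(n)}(A)\in M_n(M_m)^+$, so $(s\circ\phi^{(n)})(A)=s\big(\phi^{(n)}(A)\big)\ge 0$. Thus $s\circ\phi^{(n)}$ is positive on $M_n(\text{OMIN}_k(M_p))$, and since $M_n(M_p)^+\subseteq C_n^{k-min}(M_p)$ it is in particular a positive linear functional on $M_n\otimes M_p$; Proposition~\ref{ksep} then declares it $k$-separable. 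Because $n$ and $s$ are arbitrary, this exhibits $\phi$ as $k$-partially entanglement breaking.

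For the converse I would assume $\phi$ is $k$-PEB, fix $n\in\bb N$ and $A\in C_n^{k-min}(M_p)$, and let $s$ range over states on $M_n\otimes M_m$. By hypothesis each $s\circ\phi^{(n)}$ is a $k$-separable state, hence a positive linear functional, so Proposition~\ref{ksep} makes it positive on $M_n(\text{OMIN}_k(M_p))$; evaluating at $A$ yields $s\big(\phi^{(n)}(A)\big)=(s\circ\phi^{(n)})(A)\ge 0$. As this holds for all states $s$, the matrix-positivity criterion above forces $\phi^{(n)}(A)\in M_n(M_m)^+$. Arbitrariness of $n$ and $A$ then gives $\phi^{(n)}\big(C_n^{k-min}(M_p)\big)\subseteq M_n(M_m)^+$ for all $n$, i.e.\ complete positivity of $\phi:\text{OMIN}_k(M_p)\to M_m$.

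The computation is short because Proposition~\ref{ksep} carries the load, so the one place I expect to need care is the positivity hypothesis hidden in that proposition: it characterizes $k$-separability only among functionals already known to be positive. In the forward direction this will come for free, since positivity on the larger cone $C_n^{k-min}(M_p)$ automatically entails positivity on $M_n(M_p)^+$; in the converse it is supplied by the definition of $k$-PEB itself, which asserts that each $s\circ\phi^{(n)}$ is a genuine state. I anticipate no trouble from the size of $k$: if $k$ exceeds the relevant Schmidt bound the $k$-separability condition turns vacuous and $\text{OMIN}_k(M_p)$ collapses to $M_p$, leaving both equivalence chains intact. The only ingredient beyond Proposition~\ref{ksep} is the elementary detection of matrix positivity by states, which needs no separate argument.
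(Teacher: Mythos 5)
Your proof is correct and is essentially the paper's argument with the duality unpacked: the paper phrases both directions through the adjoint map $\phi'$ and the identification $f\circ\phi^{(n)}=[\phi'(f_{ij})]$, invoking Proposition~\ref{ksep} to recognize $M_n(\text{OMIN}_k(M_p)')^+$ as the $k$-separable functionals, which is exactly your composition-with-states argument. Your explicit use of the fact that states detect positivity of matrices is the step the paper leaves implicit in the assertion that $\phi$ is completely positive if and only if $\phi'$ is, so nothing of substance differs.
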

\begin{proof} Assume $\phi:\text{OMIN}_k(M_p)\to M_m$ is completely positive. Then $\phi':M_m'\to \text{OMIN}_k(M_p)'$ is completely positive too. If $f=(f_{ij})\in M_n(M_m')^+$ is any state on $M_n\otimes M_m$, then $\left[\phi'(f_{ij})\right] \in M_n(\text{OMIN}_k(M_p)')^+$. By Proposition ~\ref{ksep}, these are exactly the k-separable states on $M_n\otimes M_p$, i.e. $$f\circ \phi^{(n)}=\left[\phi'(f_{ij})\right]=\left[f_{ij}\circ \phi\right]:M_n\otimes M_p \to \bb C$$ is k-separable, for all states $f:M_n\otimes M_m\to \bb C$. This implies $\phi$ is a k-PEB map. Conversely, assume $\phi$ is k-PEB. Then, for any $f=(f_{ij})\in M_n(M_m')^+$, we have $f\circ \phi^{(n)}$ is k-separable, i.e. $f\circ \phi^{(n)}=\left[\phi'(f_{ij})\right]\in M_n(\text{OMIN}_k(M_p)')^+$, which implies that $\phi': M_m'\to \text{OMIN}_k(M_p)'$ is completely positive. As a result, we have $\phi:\text{OMIN}_k(M_p)\to M_m$ is completely positive.
\end{proof}

\vspace{0.5cm}

\noindent{\bf Note:} Let $U=\left[\begin{matrix}u_1\\ u_2\\\vdots\\ u_k\end{matrix}\right]=\sum_{j=1}^ke_j\otimes u_j \in \bb C^k\otimes \bb C^m$, where $u_j\in \bb C^m$, $1\le j\le k$. Then $U$ can be viewed as the $m\times k$ matrix $M_u=\left[\begin{matrix}u_1 &u_2 &\cdots & u_k\end{matrix}\right]\in M_{m,k}$. If $\lambda \in \bb C^k$, then we have $(\lambda^*\otimes I_m)(UU^*)(\lambda\otimes I_m)=M_u(\lambda \lambda^*)M_u^*.$

\begin{prop}\label{kpebmax}  Let $\phi:M_p\to M_m$ be a linear map. Then $\phi:M_p\to \emph{OMAX}_k(M_m)$ is completely positive if and only if there exist completely positive maps $\psi_l:M_p\to M_k$ and matrices $M_l \in M_{m,k}$, $l=1,\dots, q$ such that $\phi(X)=\sum_{l=1}^q M_l \psi_l(X) M_l^*$.
\end{prop}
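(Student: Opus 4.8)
The plan is to pass to Choi matrices and to recognize that, for $\cl S = M_m$, the cone $D_n^{k-max}(M_m)$ is precisely the cone of finite sums $\sum_\gamma z_\gamma z_\gamma^*$ with each $z_\gamma \in \bb C^n \otimes \bb C^m$ of Schmidt rank at most $k$. First I would record the elementary identity $A_l D_l A_l^* = (A_l \otimes I_m) D_l (A_l \otimes I_m)^*$ (the preliminaries' rule $XaY = (X \otimes I_m)a(Y \otimes I_m)$ with $V = M_m$), so that a generic element of $D_n^{k-max}(M_m)$ is $\sum_l (A_l \otimes I_m) D_l (A_l \otimes I_m)^*$ with $A_l \in M_{n,k}$ and $D_l \in M_k(M_m)^+$. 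Writing $D_l = \sum_\beta \xi_\beta \xi_\beta^*$ with $\xi_\beta \in \bb C^k \otimes \bb C^m$ and putting $z = (A_l \otimes I_m)\xi_\beta$, the vector $z$ arises from a vector of $\bb C^k \otimes \bb C^m$ (automatically of Schmidt rank $\le k$) by a linear map on the first leg, hence has Schmidt rank $\le k$; conversely a Schmidt decomposition $z = \sum_{r=1}^k \alpha_r\, u_r \otimes v_r$ lets me set $A = [\,u_1\ \cdots\ u_k\,] \in M_{n,k}$ and $\omega = \sum_r \alpha_r\, e_r \otimes v_r \in \bb C^k \otimes \bb C^m$ and check $zz^* = (A \otimes I_m)(\omega\omega^*)(A \otimes I_m)^*$ with $\omega\omega^* \in M_k(M_m)^+$. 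This yields the characterization $(\dagger)$: an element of $M_n(M_m)^+$ lies in $D_n^{k-max}(M_m)$ if and only if it is a finite sum of rank-one positives $z_\gamma z_\gamma^*$ whose generating vectors have Schmidt rank at most $k$.

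For the forward implication I would assume $\phi(X) = \sum_l M_l \psi_l(X) M_l^*$ with $\psi_l \in CP(M_p, M_k)$ and $M_l \in M_{m,k}$. Given $A \in M_n(M_p)^+$, applying $\phi$ entrywise gives $\phi^{(n)}(A) = \sum_l (I_n \otimes M_l)\,\psi_l^{(n)}(A)\,(I_n \otimes M_l)^*$, since ampliating the map $T \mapsto M_l T M_l^*$ is conjugation by $I_n \otimes M_l$. As $\psi_l$ is completely positive, $\psi_l^{(n)}(A) \in M_n(M_k)^+$, so I expand it as $\sum_\alpha w_\alpha w_\alpha^*$ with $w_\alpha \in \bb C^n \otimes \bb C^k$; then each $(I_n \otimes M_l)w_\alpha$ has Schmidt rank $\le k$, and by $(\dagger)$ every summand lies in $D_n^{k-max}(M_m)$. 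Hence $\phi^{(n)}(A) \in D_n^{k-max}(M_m) \subseteq C_n^{k-max}(M_m)$, proving that $\phi : M_p \to \text{OMAX}_k(M_m)$ is completely positive.

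For the converse the main tool is the Choi matrix $C_\phi = \phi^{(p)}(\widetilde E)$, where $\widetilde E = (E_{ij})_{i,j} \in M_p(M_p)^+$. Complete positivity into $\text{OMAX}_k(M_m)$ forces $C_\phi \in C_p^{k-max}(M_m)$, and since $M_m$ is finite dimensional, Remark~\ref{arch}(1) gives $C_p^{k-max}(M_m) = D_p^{k-max}(M_m)$; thus $(\dagger)$ applies and $C_\phi = \sum_\gamma z_\gamma z_\gamma^*$ with each $z_\gamma \in \bb C^p \otimes \bb C^m$ of Schmidt rank $\le k$. I would then factor $z_\gamma = (I_p \otimes M_\gamma)\eta_\gamma$, with $M_\gamma = [\,v_1^\gamma\ \cdots\ v_k^\gamma\,] \in M_{m,k}$ built from the Schmidt vectors and $\eta_\gamma \in \bb C^p \otimes \bb C^k$, so that $C_\phi = \sum_\gamma (I_p \otimes M_\gamma)(\eta_\gamma\eta_\gamma^*)(I_p \otimes M_\gamma)^*$. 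Letting $\psi_\gamma : M_p \to M_k$ be the completely positive map with Choi matrix $\eta_\gamma\eta_\gamma^* \in M_p(M_k)^+$ (Choi's theorem), the map $X \mapsto \sum_\gamma M_\gamma \psi_\gamma(X) M_\gamma^*$ has Choi matrix $\sum_\gamma (I_p \otimes M_\gamma)(\eta_\gamma\eta_\gamma^*)(I_p \otimes M_\gamma)^* = C_\phi$; by injectivity of the Choi correspondence it coincides with $\phi$, giving the required decomposition.

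The step I expect to be the crux is the identification $(\dagger)$. The definition of $D_n^{k-max}(M_m)$ conjugates blocks $D_l \in M_k(M_m)^+$ by scalar matrices $A_l$ acting on the $\bb C^n$-leg, whereas the target form conjugates positive elements of $M_p \otimes M_k$ by a fixed $M_l \in M_{m,k}$ acting on the $\bb C^m$-leg; these describe the \emph{same} cone but with the two tensor legs in opposite roles, and the bookkeeping that reconciles them is exactly the observation that ``Schmidt rank $\le k$'' is the symmetric invariant underlying both. Some care will be needed with the reshuffling identifications $M_n(M_r) \cong M_n \otimes M_r$ and with the fact that applying a linear map to a single tensor leg cannot increase the Schmidt rank.
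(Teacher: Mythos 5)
Your proof is correct. It runs on the same underlying engine as the paper's --- both directions come down to the Choi matrix of $\phi$ lying in $D_p^{k-max}(M_m)$, and both exploit the leg-swapping identity $(\lambda^*\otimes I_m)(UU^*)(\lambda\otimes I_m)=M_u(\lambda\lambda^*)M_u^*$ recorded in the Note preceding the proposition --- but you organize it around a lemma the paper never isolates: your $(\dagger)$, identifying $D_n^{k-max}(M_m)$ with the cone generated by rank-one positives $zz^*$ whose generating vector has Schmidt rank at most $k$. The paper instead performs the equivalent bookkeeping inline, splitting each $A_l$ into columns $\lambda_{i,l}$ and conjugating rank-one blocks $D_l=U_lU_l^*$ entry by entry. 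Your packaging buys two things. First, $(\dagger)$ makes visible at once that $D_n^{k-max}(M_m)$ is exactly the cone of unnormalized density matrices of Schmidt number at most $k$, a fact the paper only recovers implicitly by combining Proposition~\ref{ksep} with Proposition~\ref{duality}. Second, in the direction ``decomposition implies completely positive'' you verify positivity of $\phi^{(n)}$ at every level $n$ directly, whereas the paper checks only the level-$p$ Choi matrix and tacitly invokes Choi's theorem for maps into an arbitrary operator system (positivity of $(\phi(E_{ij}))$ in $M_p(\mathcal{T})^+$ implies complete positivity), a step that strictly speaking rests on the Choi--Effros representation; your route avoids it. Your substitute justification for $C_p^{k-max}(M_m)=D_p^{k-max}(M_m)$ --- finite-dimensional Archimedeanity via Remark~\ref{arch} rather than the paper's asserted closedness of the cone --- is equally legitimate (and equally left without detailed proof in both treatments). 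The remaining individual steps, namely that Schmidt rank does not increase under a linear map applied to one tensor leg, the factorization $z_\gamma=(I_p\otimes M_\gamma)\eta_\gamma$ from a Schmidt decomposition, and injectivity of the Choi correspondence, are all sound.
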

\begin{proof} We have that  $\phi:M_p\to \text{OMAX}_k(M_m)$ is completely positive if and only if $(\phi(E_{ij}))\in M_p(\text{OMAX}_k(M_m))^+={C}_p^{k-max}(M_m)={D}_p^{k-max}(M_m)$, since the set ${D}_p^{k-max}(M_m)$ is closed. Thus, there exists an integer $q$, $A_1,\dots, A_q \in M_{k,p}$, positive matrices $D_1, \dots, D_q \in M_k(M_m)^+$, such that $$(\phi(E_{ij}))=\sum_{l=1}^q(A_l^* \otimes I_m)D_l(A_l\otimes I_m).$$  Write $A_l=\left[\begin{matrix}\lambda_{1,l} &\lambda_{2,l} &\cdots &\lambda_{p,l}\end{matrix}\right]$, where $\lambda_{i,l} \in \bb C^k$ for all $i=1,\dots,p$. Then, we have $\phi(E_{ij})=\sum_{l=1}^q (\lambda_{i,l}^*\otimes I_m)D_l(\lambda_{j,l}\otimes I_m)$. Since $D_l\in M_k(M_m)^+$, then $D_l=\sum_{r=1}^t U_{r,l} U_{r,l}^*$, where $U_{r,l}\in \bb C^k\otimes \bb C^m$ for all $1\le r\le t$. Without loss of generalization, assume $D_l=U_lU_l^*$, where $U_l=\left[\begin{matrix}u_{1,l}\\ u_{2,l}\\\vdots\\ u_{k,l}\end{matrix}\right]$, each $u_{e,l}\in \bb C^m$, for all $1\le e\le k$. This implies
$$\phi(E_{ij})=\sum_{l=1}^q(\lambda_{i,l}^*\otimes I_m)D_l(\lambda_{j,l}\otimes I_m)=\sum_{l=1}^q M_l\left[ (\bar \lambda_{i,l}) (\bar\lambda_{j,l})^*\right]M_l^*,$$
where $M_l=\left[\begin{matrix}u_{1,l} & u_{2,l} &\cdots & u_{k,l}\end{matrix}\right]\in M_{m,k}$ is the corresponding matrix for $U_l$. If we define completely positive maps $\psi_l:M_p \to M_k$ by $$\psi_l(X)=\sum_{i,j=1}^p (\bar \lambda_{i,l}) x_{ij} (\bar \lambda_{j,l})^*=\bar A_l X\bar A_l^*,$$ then we have that $\phi(E_{ij})=\sum_{l=1}^q M_l \psi_l(E_{ij})M_l^*$, for all $1\le i,j\le p$, and hence $\phi(X)=\sum_{l=1}^q M_l \psi_l(X) M_l^*$ for every $X\in M_p$.\\
Conversely, given any completely positive map $\psi:M_p\to M_k$, then $\psi$ can be written as $\psi(X)=(\bar A) X (\bar A)^* = \sum_{i,j=1}^p (\bar\lambda_i)x_{ij}(\bar\lambda_j)^*$, where $A=\left[\begin{matrix}\lambda_1 &\lambda_2 &\cdots &\lambda_p\end{matrix}\right]\in M_{k,p}$ with $\lambda_i\in \bb C^k$. Thus, if $\phi(X)=\sum_{l=1}^q M_l \psi_l(X)M_l^*$, where $M_l=\left[\begin{matrix}u_{1,l} & u_{2,l} &\cdots & u_{k,l}\end{matrix}\right]\in M_{m,k}$ with $u_{e,l}\in \bb C^m$ for all $1\le e\le k$, and $\psi_l:M_p\to M_k$ completely positive, then by increasing the number of terms in the sum we may assume that each $\psi_l$ has the form $\psi_l(X)=(\bar A_l)X(\bar A_l)^*=\sum_{i,j=1}^p (\bar \lambda_{i,l}) x_{ij} (\bar \lambda_{j,l})^*$, and hence $$\phi(E_{ij})=\sum_{l=1}^q M_l\left[ (\bar \lambda_{i,l}) (\bar\lambda_{j,l})^*\right]M_l^* =\sum_{l=1}^q(\lambda_{i,l}^*\otimes I_m)D_l(\lambda_{j,l}\otimes I_m),$$\\ where $D_l=\left[\begin{matrix}u_{1,l}\\ u_{2,l}\\\vdots\\ u_{k,l}\end{matrix}\right]\left[\begin{matrix}u^*_{1,l} & u^*_{2,l}& \cdots &  u^*_{k,l}\end{matrix}\right]=U_lU_l^* \in M_k(M_m)^+$. \\Thus $(\phi(E_{ij}))=\sum_{l=1}^q (A_l^*\otimes I_m)D_l(A_l\otimes I_m) \in {D}_p^{k-max}(M_m)$, and it follows that $\phi:M_p\to \text{OMAX}_k(M_m)$ is completely positive.

\end{proof}

\begin{cor}\label{maxkpeb} If $\phi:M_p\to \emph{OMAX}_k(M_m)$ is completely positive, then $\phi$ is a k-partially entanglement breaking map.
\end{cor}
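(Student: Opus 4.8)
The plan is to pull the statement back to Theorem~\ref{kpebmin}, which characterizes the k-partially entanglement breaking maps $\phi:M_p\to M_m$ as exactly those for which $\phi:\text{OMIN}_k(M_p)\to M_m$ is completely positive. So my whole task reduces to proving that if $\phi:M_p\to\text{OMAX}_k(M_m)$ is completely positive, then the same map is completely positive when its domain is re-equipped with the super k-minimal structure and its codomain carries the natural structure of $M_m$.

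To get a workable description of $\phi$, I would first apply Proposition~\ref{kpebmax}, which gives a factorization $\phi(X)=\sum_{l=1}^q M_l\psi_l(X)M_l^*$ with each $\psi_l:M_p\to M_k$ completely positive and each $M_l\in M_{m,k}$. Writing $\Phi_l(X)=M_l\psi_l(X)M_l^*$, it then suffices to show that every $\Phi_l:\text{OMIN}_k(M_p)\to M_m$ is completely positive, because $\phi=\sum_l\Phi_l$ and the cones $M_n(M_m)^+$ are closed under sums.

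The crucial point is that each $\psi_l$ is automatically completely positive as a map $\text{OMIN}_k(M_p)\to M_k$. Indeed, by the generalized description of the super k-minimal cones, $(a_{ij})\in C_n^{k-min}(M_p)$ holds exactly when $(\psi(a_{ij}))\ge 0$ for every k-positive map $\psi:M_p\to M_k$; since $\psi_l$ is completely positive it is in particular k-positive, so $\psi_l^{(n)}\big(C_n^{k-min}(M_p)\big)\subseteq M_n(M_k)^+$ for every $n$. The conjugation $Y\mapsto M_lYM_l^*$ is completely positive from $M_k$ to $M_m$, so $\Phi_l$ is a composition $\text{OMIN}_k(M_p)\xrightarrow{\psi_l}M_k\xrightarrow{M_l(\cdot)M_l^*}M_m$ of completely positive maps and is therefore completely positive. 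Summing over $l$ shows $\phi:\text{OMIN}_k(M_p)\to M_m$ is completely positive, and Theorem~\ref{kpebmin} finishes the argument.

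I expect the single delicate step to be the middle one: recognizing that the super k-minimal ordering on the domain is engineered precisely so that completely positive maps into $M_k$ survive the passage to the weaker OMIN$_k$ cones. This is what makes the factorization from Proposition~\ref{kpebmax} interact cleanly with Theorem~\ref{kpebmin}; once it is noted, everything else is routine manipulation of matrix cones and does not require revisiting the definition of k-separability directly.
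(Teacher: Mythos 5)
Your proposal is correct and follows essentially the same route as the paper: both start from the factorization $\phi(X)=\sum_l M_l\psi_l(X)M_l^*$ of Proposition~\ref{kpebmax} and both hinge on the observation that each completely positive $\psi_l:M_p\to M_k$ sends $C_n^{k-min}(M_p)$ into $M_n(M_k)^+$. The only cosmetic difference is that you finish by citing the equivalence in Theorem~\ref{kpebmin}, whereas the paper unwinds the definition of k-separability via Proposition~\ref{ksep} directly; the content is the same.
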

\begin{proof} By Proposition~\ref{kpebmax}, there exist completely positive maps $\psi_l:M_p\to M_k$ and matrices $M_l \in M_{m,k},\, 1\le l\le q$, such that $\phi(X)=\sum_{l=1}^q M_l\psi_l(X)M_l^*$. Given any $n\in \bb N$ and any positive linear functional $f:M_n\otimes M_m\to \bb C$, we have $f\circ \phi^{(n)} :M_n\otimes M_p\to \bb C$ is k-separable if and only if $f\circ \phi^{(n)} :M_n(\text{OMIN}_k(M_p))\to \bb C$ is a positive linear functional by Proposition~\ref{ksep}. Let $(X_{ij})\in {C}_n^{k-min}(M_p)$, then we have
\begin{eqnarray*}\phi^{(n)}((X_{ij}))=\left(\phi(X_{ij})\right)&=&\sum_{l=1}^q \left(M_l\psi_l(X_{ij})M_l^*\right)\\ &=& \sum_{l=1}^q (I_n\otimes M_l)\psi^{(n)}((X_{ij}))(I_n\otimes M_l^*) \ge 0,\end{eqnarray*}
since $\psi^{(n)}((X_{ij}))\ge $ for all $(X_{ij})\in {C}_n^{k-min}(M_p)$. Thus, $(f\circ\phi^{(n)})((X_{ij}))=f((\phi(X_{ij}))) \ge 0$ since $f$ is a positive linear functional on $M_n\otimes M_m$ and $\left(\phi(X_{ij})\right)\in M_n(M_m)^+$. As a result, $f\circ \phi^{(n)}$ is k-separable, which implies that $\phi$ is k-PEB.
\end{proof}

\begin{thm} Let $\phi:M_p\to M_m$ be a linear map, and $k \le \min(p,m)$. Then the following are equivalent:
\begin{itemize}
\item[(i)] $\phi:\emph{OMIN}_k(M_p)\to M_m$ is completely positive.
\item[(ii)]$\phi$ is k-partially entanglement breaking.
\item[(iii)] $\phi: M_p\to \emph{OMAX}_k(M_m)$ is completely positive.
\item[(iv)] There exist completely positive maps $\psi_l:M_p\to M_k$ and $M_l\in M_{m,k}$, for $1\le l\le q$ such that $\phi(X)=\sum_{l=1}^q M_l\psi_l(X)M_l^*$.
\item[(v)] There exist matrices $A_l \in M_{p,m},\, 1\le l\le r$ of rank at most $k$, such that $\phi(X)=\sum_{l=1}^s A_l^* XA_l$.
\item[(vi)] $\phi:\emph{OMIN}_k(M_p)\to \emph{OMAX}_k(M_m)$ is completely positive.
\end{itemize}
\end{thm}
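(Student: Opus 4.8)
The plan is to prove the six conditions equivalent by gluing together the edges already established and supplying a few new ones, arranged into a single cycle; throughout I use the standing hypothesis $k\le\min(p,m)$, which is what makes the Schmidt-rank constraints nontrivial. Three edges come for free: Theorem~\ref{kpebmin} gives (i)$\iff$(ii), Proposition~\ref{kpebmax} gives (iii)$\iff$(iv), and Corollary~\ref{maxkpeb} gives (iii)$\Rightarrow$(ii). What remains is to connect the ``$k$-PEB / $\mathrm{OMIN}_k$-domain'' side of the statement to the ``$\mathrm{OMAX}_k$-target / factorization'' side, and to splice in (v) and (vi). Concretely I aim for the loop (i)$\Rightarrow$(ii)$\Rightarrow$(iii)$\Rightarrow$(iv)$\Rightarrow$(v)$\Rightarrow$(vi)$\Rightarrow$(i), invoking the cited equivalences along the way.

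Several of the new edges are routine. For (iv)$\iff$(v): Kraus-expand each completely positive $\psi_l:M_p\to M_k$ from (iv) as $\psi_l(X)=\sum_i V_{il}XV_{il}^*$ with $V_{il}\in M_{k,p}$ and absorb $M_l$, so that $\phi(X)=\sum_{l,i}(V_{il}^*M_l^*)^*X(V_{il}^*M_l^*)$ with each $V_{il}^*M_l^*\in M_{p,m}$ of rank at most $k$, giving (v); conversely a rank factorization $A_l^*=M_lN_l$ with $M_l\in M_{m,k}$, $N_l\in M_{k,p}$ turns (v) back into (iv) with $\psi_l(X)=N_lXN_l^*$. For (vi)$\Rightarrow$(i) I compose with the completely positive identity $\mathrm{OMAX}_k(M_m)\to M_m$ coming from the cone inclusion $M_n(\mathrm{OMAX}_k(M_m))^+\subseteq M_n(M_m)^+$. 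For (v)$\Rightarrow$(vi) I first note, using $C_k^{k-min}(M_p)=M_k(M_p)^+$ together with Proposition~\ref{kpos}, that every completely positive $\psi_l:M_p\to M_k$ is already completely positive as a map out of $\mathrm{OMIN}_k(M_p)$; hence for $(X_{ij})\in C_n^{k-min}(M_p)$ we have $\psi_l^{(n)}((X_{ij}))\ge 0$, and in $\phi^{(n)}((X_{ij}))=\sum_l(I_n\otimes M_l)\psi_l^{(n)}((X_{ij}))(I_n\otimes M_l^*)$ each rank-one piece $vv^*$ of $\psi_l^{(n)}((X_{ij}))$, with $v\in\mathbb C^n\otimes\mathbb C^k\cong M_{k,n}$, is carried by $I_n\otimes M_l$ to a vector whose matrix picture is $M_lV\in M_{m,n}$ of rank at most $k$, i.e.\ to a vector of Schmidt rank at most $k$. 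Since, by the analysis in Section~\ref{kpeb}, a rank-one positive $ww^*$ with $w$ of Schmidt rank $\le k$ lies in $D_n^{k-max}(M_m)$, and since this set is a cone, we get $\phi^{(n)}((X_{ij}))\in C_n^{k-max}(M_m)$, which is (vi).

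The genuine obstacle is the passage (ii)$\Rightarrow$(iii), the converse of Corollary~\ref{maxkpeb}, where the entanglement input is really used. My plan is to test the $k$-PEB hypothesis against one well-chosen state. A $k$-PEB map is in particular completely positive (compose (i) with the identity $M_p\to\mathrm{OMIN}_k(M_p)$), so its Choi matrix $C_\phi=(\phi(E_{ij}))$ is positive; and by Choi's theorem (iii) is equivalent to $C_\phi\in M_p(\mathrm{OMAX}_k(M_m))^+=D_p^{k-max}(M_m)$, which by the Schmidt-rank description above means exactly that $C_\phi$ has Schmidt number at most $k$. To force this, take $n=m$ and let $s$ be the normalized maximally entangled state on $M_m\otimes M_m$; a direct computation gives $(s\circ\phi^{(m)})(E_{ab}\otimes E_{ij})=\tfrac1m\,\phi(E_{ij})_{ab}$, so the density matrix of $s\circ\phi^{(m)}$ is $\tfrac1m$ times the flip of $C_\phi$ interchanging the two tensor factors. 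Since $\phi$ is $k$-PEB, $s\circ\phi^{(m)}$ is $k$-separable, i.e.\ its density has Schmidt number $\le k$; and because the Schmidt rank is the rank of the associated matrix $A_u$ and is therefore invariant under exchanging the factors (which only transposes $A_u$), $C_\phi$ itself has Schmidt number $\le k$. This gives $C_\phi\in D_p^{k-max}(M_m)$ and hence (iii).

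The \emph{hard part} is precisely this last step: correctly identifying the density matrix of $s\circ\phi^{(m)}$ with the flip of $C_\phi$, and invoking swap-invariance of the Schmidt number, together with the identification of $D_p^{k-max}(M_m)$ with the cone of Schmidt-number-$\le k$ positive matrices that must be read off from the discussion preceding Proposition~\ref{ksep}. A conceptually cleaner alternative to (i)$\iff$(iii) is dualization: by Proposition~\ref{duality} one has $\mathrm{OMAX}_k(M_m)'=\mathrm{OMIN}_k(M_m)$ and $M_p'\cong M_p$, so (iii) is equivalent to $\phi':\mathrm{OMIN}_k(M_m)\to M_p$ being completely positive, that is, by Theorem~\ref{kpebmin}, to $\phi'$ being $k$-PEB; this reduces the claim to the symmetry of the $k$-PEB property under taking adjoints, which again rests on the swap-invariance of the Schmidt number of the Choi matrix. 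I would present the maximally-entangled-state computation as the main argument, since it is self-contained, and keep the duality as the explanatory remark.
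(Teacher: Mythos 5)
Your proposal is correct, and it reaches the one genuinely nontrivial edge --- (ii)$\Rightarrow$(iii) --- by a different route than the paper. The paper closes that gap purely by duality: from Proposition~\ref{duality} it gets that $\phi:M_p\to\mathrm{OMAX}_k(M_m)$ is completely positive iff $\phi^\flat:\mathrm{OMIN}_k(M_m)\to M_p$ is, i.e.\ iff $\phi^\flat$ is $k$-PEB; combining this with Corollary~\ref{maxkpeb} yields the implication ``$\phi^\flat$ $k$-PEB $\Rightarrow$ $\phi$ $k$-PEB'' for \emph{all} $\phi$, and substituting $\phi^\flat$ for $\phi$ flips it into ``$\phi$ $k$-PEB $\Rightarrow$ $\phi^\flat$ $k$-PEB,'' which is exactly (ii)$\Rightarrow$(iii). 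Note that this bootstrap is slicker than the justification you sketch in your closing remark --- the paper never has to prove swap-invariance of the Schmidt number directly; it extracts the symmetry formally from the already-proven one-sided Corollary~\ref{maxkpeb}. Your main argument instead tests the $k$-PEB hypothesis against the maximally entangled state, identifies the resulting density matrix as $\tfrac1m$ times the flip of the Choi matrix $C_\phi$, and uses flip-invariance of Schmidt rank together with the identification of $D_p^{k-max}(M_m)$ with the Schmidt-number-$\le k$ cone (which is indeed implicit in the computation inside Proposition~\ref{kpebmax}, and which you independently verify in one direction via your $ww^*=ADA^*$ factorization). This is self-contained, makes the quantum-information content of the equivalence explicit, and avoids leaning on Proposition~\ref{duality}; the cost is that you must carry out the Choi-matrix bookkeeping carefully, whereas the paper's argument is two lines once the duality machinery is in place. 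Your remaining edges --- the Kraus/rank-factorization proof of (iv)$\Leftrightarrow$(v), the composition argument for (vi)$\Rightarrow$(i), and the explicit Schmidt-rank verification that the factored form lands in $C_n^{k-max}(M_m)$ for (v)$\Rightarrow$(vi) --- match the paper's, except that you actually supply the detail for the last step where the paper only says ``one can easily check''; the only cosmetic slip is that your (v)$\Rightarrow$(vi) argument starts from the (iv) form of $\phi$, which is harmless since you prove (iv)$\Leftrightarrow$(v) independently.
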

\begin{proof}The equivalence of $(i)$ and $(ii)$ is stated in Theorem~\ref{kpebmin}, while the equivalence of $(iii)$ and $(iv)$ is stated in Proposition~\ref{kpebmax}. By Corollary~\ref{maxkpeb}, $(iii)$ implies $(ii)$. Note that $\phi:M_p\to \text{OMAX}_k(M_m)$ is completely positive if and only if $\phi': \text{OMAX}_k(M_m)'\to M_p'$ is completely positive. Using the identifications of Proposition~\ref{duality}, we have $\phi':\text{OMIN}_k(M_m')\to M_p'$  is completely positive if and only if $\phi^\flat =\gamma_p^{-1}\circ \phi'\circ \gamma_m :\text{OMIN}_k(M_m) \to M_p$ is completely positive, i.e. $\phi^\flat:M_m\to M_p$ is k-PEB. Hence, if $\phi=(\phi^\flat)^\flat$ is k-PEB, then $\phi^\flat$ is k-PEB, which is equivalent to $\phi:M_p\to \text{OMAX}_k(M_m)$ is completely positive. So $(ii)$ implies $(iii)$. Now we have the equivalence $(i)-(iv)$.\\To show that $(iv)$ implies $(v)$, we may assume that each completely positive map $\psi_l:M_p\to M_k$ can be written as $\psi_l(X)=\sum_{j=1}^rB^*_{j,l}XB_{j,l}$, for some $B_{j,l} \in M_{p,k}$. Then,
$$\phi(X)=\sum_{l=1}^q \sum_{j=1}^r M_l B_{j,l}^*X B_{j.l}M_l^*= \sum_{l=1}^s A_l^*X A_l,$$
where each $A_l= B_{j,l}M_l^* \in M_{p,m}$ has rank at most k for all $1\le l\le s$, since $\text{rank}(A_l) \le \min(\text{rank}(B_{j,l}),\text{rank}(M_l))=k$.\\ To see that $(v)$ implies $(iv)$, each $A_l\in M_{p,m}$ of rank at most k, can be factorized as $A_l=B_lM_l$, where $M_l \in M_{k,m}$ is the reduced matrix of $A_l$ containing only the k rows that span $A_l$, and $B_l\in M_{p,k}$ is the coefficient matrix of $A_l$. Set $\psi_l(X)=B_l^*XB_l$ which is completely positive, then $\phi(X)=\sum_{l=1}^s M_l^*\psi_l(X)M_l$.\\Finally, clearly $(vi)$ implies $(i)$. One can easily check that $(iv)$ implies $(vi)$.
\end{proof}

\vspace{0.1in}

\noindent{\bf Acknowledgements.} I owe my deepest gratitude to my graduate advisor Dr. Vern Paulsen for his supervision and support during my graduate degree at the University of Houston. His unique perspectives and visualizations for the subjects were the most significant contribution to this paper.  


\end{document}